\definecolor{labelkey}{rgb}{0,0,1}
\def\url@leostyle{%
 \@ifundefined{selectfont}{\def\UrlFont{\sf}}{\def\UrlFont{\scriptsize\ttfamily}}} \makeatother\urlstyle{leo}
\newtheorem{theorem}{Theorem}
\newtheorem{proposition}[theorem]{Proposition}
\newtheorem{lemma}[theorem]{Lemma}
\theoremstyle{definition}
\newtheorem{definition}[theorem]{Definition}
\newtheorem{example}[theorem]{Example}
\theoremstyle{remark}
\newtheorem{remark}[theorem]{Remark}
\numberwithin{equation}{section}
\numberwithin{theorem}{section}
\def\cA{\mathcal{A}}
\def\cB{\mathcal{B}}
\def\cC{\mathcal{C}}
\def\cE{\mathcal{E}}
\def\cF{\mathcal{F}}
\def\cK{\mathcal{K}}
\def\cP{\mathcal{P}}
\def\cQ{\mathcal{Q}}
\def\cR{\mathcal{R}}
\def\cS{\mathcal{S}}
\def\cX{\mathcal{X}}
\def\bN{\mathbb{N}}
\def\bR{\mathbb{R}}
\newcommand{\set}[1]{\{#1\}} 
\newcommand{\Set}[1]{\left\{#1\right\}} 
\renewcommand{\mid}{\;|\;}              
\newcommand{\Mid}{\;\Big | \;}          
\newcommand{\norm}[1]{\left\Vert#1\right\Vert}  
\newcommand{\abs}[1]{\left\vert#1\right\vert}   
\DeclareMathOperator*{\esssup}{ess\,sup} 
\DeclareMathOperator*{\essinf}{ess\,inf} 
\DeclareMathOperator*{\esslimsup}{ess\,\limsup}
\DeclareMathOperator*{\hypo}{hypo\,}
\def\geqc{\succcurlyeq}
\title{Dynamic Assessment Indices}
\author{Tomasz R. Bielecki \\
\small{Department of Applied Mathematics,}\\[-0.3ex]
\small{Illinois Institute of Technology,}\\[-0.3ex]
\small{Chicago, 60616 IL, USA}\\[-0.3ex]
\url{bielecki@iit.edu}\\
\and
Igor Cialenco\\[-0.3ex]
\small{Department of Applied Mathematics,}\\[-0.3ex]
\small{Illinois Institute of Technology,}\\[-0.3ex]
\small{Chicago, 60616 IL, USA}\\[-0.3ex]
\url{igor@math.iit.edu} \\
\and
Samuel Drapeau\\
\small{Humboldt-Universit\"at Berlin,}\\[-0.3ex]
\small{Unter den Linden 6,}\\[-0.3ex]
\small{10099 Berlin, Germany}\\[-0.3ex]
\url{drapeau@math.hu-berlin.de} \\
\and
Martin Karliczek\\
\small{Humboldt-Universit\"at Berlin,}\\[-0.3ex]
\small{Unter den Linden 6,}\\[-0.3ex]
\small{10099 Berlin, Germany}\\[-0.3ex]
\url{karliczm@math.hu-berlin.de}}
\date{ August 15, 2014 \\
{\footnotesize First Circulated: June 15, 2013}}
\begin{document}
\maketitle

\begin{abstract}
\noindent This paper provides a unified framework, which allows, in particular,  to study the structure of dynamic monetary risk measures and dynamic acceptability indices.
The main mathematical tool, which we  use here, and which allows us to significantly generalize existing results is the theory of $L^0$-modules.
In the first part of the paper we develop the general theory and provide a robust representation of conditional assessment indices, and in the second part we apply this theory to dynamic acceptability indices acting on stochastic processes.

{\footnotesize
\bigskip

\noindent
{\it \bf Keywords:} assessment indices, dynamic acceptability index, dynamic measures of performance, dynamic risk measures, certainty equivalent, strong time consistency, dynamic GLR. \\[.1cm]
{\it \bf MSC2010:} 91B30, 60G30, 91B06, 62P05.
}
\end{abstract}

\tableofcontents

\begin{quote}
\textit{``Each definition is a piece of secret ripped from Nature by the human spirit. I insist on this: any complicated thing being illumined by definitions, being laid out in them, being broken up in pieces, will be separated into pieces completely transparent even to a child, excluding foggy and dark parts that our intuition whispers to us while acting, separating the object into logical pieces, then only can we move further towards new success due to definitions.''}

\hfill Nikolai N. Luzin \footnotesize{(quote from Loren R. Graham's ``Naming Infinity'')}
\end{quote}

\section*{Introduction}
\addcontentsline{toc}{section}{Introduction}
\markboth{\uppercase{Introduction}}{\uppercase{Introduction}}

This paper provides a study of Assessment Indices (AIs) in a discrete time dynamic framework. Assessment indices are meant to evaluate the trade-off between reward opportunities and danger of losses.\footnote{In fact,  assessment indices can be used to assess (to measure) various risks in the classical sense, for example just as monetary risk measures, such as V@R, do, but they can also be used to assess the trade-off between monetary risks and corresponding rewards, just as acceptability indices, such as Gain-to-Loss Ratio, do. Thus, the universe of AIs encompasses both the classical risk measures and the classical acceptability indices. However, here we adopt the universal interpretation of risk in the spirit of \citet{DrapeauKupper2010}, thus we look at risks as appreciation of danger of losses vis-a-vis the potential rewards, which renders our understanding of the assessment indices.} Consequently, the two basic operational paradigms, that underlie the mathematical theory of assessment indices, are well appreciated truths that in any kind of economic/financial activities:
\vspace{-.2cm}
\begin{itemize}\addtolength{\itemsep}{-0.7\baselineskip}
\item[(A)] Diversification is better than concentration;
\item[(B)] Greater success is better than lesser success.
\end{itemize}
\vspace{-.2cm}
\noindent These two stylized key paradigms translate mathematically into quasiconcavity and monotonicity properties of an AI. In the static case, these two paradigms were studied in the context of preferences in \citet{Cerreia-Vioglioa2009,Cerreia-VioglioMaccheroniMarinacciMontrucchio2008a,DrapeauKupper2010}. The numerical representations corresponding to preference orderings satisfying properties (A) and (B) cover, among others, risk measures (cf. \citet{ArtznerDelbaenEberHeath1999,FollmerSchied2002b,FrittelliGianin2002}), as well as acceptability indices (cf. \citet{ChernyMadan2009}).

Since the main motivation for our study of assessment indices comes from the area of analysis of risks and rewards propagating in time then, as stated already, we are engaging in this paper in study of AIs in the dynamic set-up.  Thus, just as in the case of \emph{dynamic} (classical) risk measures and \emph{dynamic} acceptability indices, we call them \emph{Dynamic Assessment Indices} (DAIs).  In contrast to the static case, the study of DAIs bears additional conceptual difficulties related to the conditionality and to the need for adequate intertemporal assessment of risk and rewards propagating in time.

In the present work, we significantly extend previous studies regarding assessment indices to the conditional/dynamic setting, which, in particular, allows us to apply our theory to study DAIs acting on discrete time stochastic processes. The main mathematical tool, which we  use here in order to derive extension results of \citet{DrapeauKupper2010} to the conditional setting, is the theory of $L^0$--modules that was originated in \citet{FilipovicKupperVogelpoth2009} and in \citet{KupperVogelpoth2009}. Similar extension problem has been also studied in \citet{FrittelliMaggis2010,FrittelliMaggis2011}, \citet{BionNadal2009,BiaginiBion-Nadal2012}. Here, we provide a study in the general setting of locally convex topological $L^0$-modules inspired by the methods and techniques of \citep{DrapeauKupper2010}.

In many ways, the present paper continues and builds upon research of other people that has been presented in numerous works. For obvious reasons we can't provide here the comprehensive list of all these works. Besides the papers that we have already mentioned above, we think that the following works should be brought to the reader's attention:
\citet{CheriditoKupper2009}, \citet{AcciaioFollmerPenner2010}, \citet{FrittelliGianin2004}, \citet{Bion2008}, \citet{BionNadal2009,BiaginiBion-Nadal2012},
\citet{MadanCherny2010}, \citet{BCIR2012}, \citet{Detlefsen2005},
\citet{CheriditoDelbaenKupper2005,CheriditoDelbaenKupper2004,CheriditoDelbaenKupper2006a},
\citet{FrittelliMaggis2010,FrittelliMaggis2011}, \citet{Cerreia-Vioglioa2009,Cerreia-VioglioMaccheroniMarinacciMontrucchio2008a},
\citet{FrittelliScandolo2006}, \citet{PenotVolle1990}.

The paper is organized as follows. In Section~\ref{sec01} we introduce a set of some underlying concepts that will be used throughout the paper. Section~\ref{sec:AIGEneral} provides the main contribution of our work in the context of general theory of conditional assessment indices defined on locally convex topological $L^0$-modules and taking values in $\bar L^0$. In particular, Theorem~\ref{thm:robrep} furnishes robust representation, characterization indeed, for an upper semicontinuous conditional assessment index.
This is a novel and important result, which generalizes corresponding result obtained in the static (not conditional) setting in \cite{DrapeauKupper2010}.
The road leading to Theorem~\ref{thm:robrep}, which at first sight seems to be similar to what was done in \cite{DrapeauKupper2010}, has not been an easy one, as is seen from all the technical results (cf. Appendix, in particular) that we needed to obtain this robust dual representation. In particular, we provide a full duality result for conditionally increasing functions and their general left and right inverses.
The main hurdle lies in the central issue of locality, that is delicate and had to be handled with outmost care.
The results regarding scale invariant indices and the results regarding certainty equivalents, presented in Section~\ref{sec:scaleInv} and in Section~\ref{sec024}, respectively, are new, interesting, and useful.
In Section~\ref{sec03} we apply our general theory to study DAIs for discrete time stochastic processes. This comes in two flavors. First, in Section~\ref{sec031}, we apply the results of Section~\ref{sec:AIGEneral} almost verbatim, considering dynamic assessment index mapping processes into sequence of processes, and by making a very natural choice of $L^0$ space to be the space of stopped processes. Analysis of the robust representation result derived in this section brings about an interesting insight regarding the nature of the locality property: indeed, requiring locality relative to $\mathscr{O}^t$ (cf. Section~\ref{sec031}) implies that  $\alpha^t_t$ assess only the future (relative to $t$) of the process and, for $s<t$, $\alpha^t_s$ is just a function of the value at time $s$ of the assessed process. This is a drawback as for some applications this may be an unwanted feature. To overcome this drawback we adapt in Section~\ref{sec:PathDep} the theory of Section~\ref{sec:AIGEneral}, to the case of so called path dependent DAI, which maps processes into processes. They may, in particular, help a decision maker (eg. investor, or regulator), who is willing to design a DAI that at each time explicitly accounts for the past evolution of the underlying process that is being assessed (see Example~\ref{exep:pathdep}).
In Section \ref{sec04} we study strongly time consistent path dependent assessment indices, that satisfy some additional properties. The corresponding certainty equivalent is used to derive a relevant version of the dynamic programming principle, which characterizes the strong time consistency in this case.
Section \ref{sec:Examples} provides illustrating examples that we consider both interesting and important. We examine here a version of dynamic gain-to-loss ratio, which is a scale invariant DAI, and, in particular, we provide a robust representation for it. Two additional examples are given as well. Finally, in the Appendix, we present a variety of mathematical results, which underlie our theory. Appendix also contains proofs of some auxiliary technical results stated in the main body of the paper.

\section{Preliminaries}\label{sec01}

Let $( \Omega,\mathscr{G},P )$ be a probability space.
By $\mathscr{G}_{+}$ we denote the set of all events $A \in \mathscr{G}$ with $P[A]>0$.
If not otherwise specified, the notation $[A_i]\subseteq \mathscr{G}$ stands for a countable partition $(A_i)_{i\in \mathbb{N}}\subseteq \mathscr{G}$ of $\Omega$.
By $ L^0$, and $\bar{L}^0$ we denote the spaces of all $\mathscr{G}$-measurable random variables with values in $( -\infty,\infty)$ and $[-\infty,\infty]$, respectively.
As usual,  we identify random variables, which are equal  $P$-almost surely.
The relations $m>n$ and $m\geq n$ for  two random variables $m,n\in\bar{L}^0$ are to be  understood in the $P$-almost sure sense, that is, $P[m>n]=1$ and $P[m\geq n]=1$ respectively.
We also define   $L^0_{+} :=\set{m \in L^0 \mid m\geq 0}$ and $L^0_{++}:=\set{m \in L^0 \mid m>0}$.

As already stated in the Introduction, we are working in the setting of $L^0$-modules.
We refer to \citep{FilipovicKupperVogelpoth2009,KupperVogelpoth2009}, where this theory was initiated, for further details.
The space $L^0$ is a lattice ordered ring on which we, throughout the paper, consider the topology induced by the  balls
\begin{equation*}
	B_{\varepsilon}\left(  m \right):=\Set{n \in L^0 : \abs{m-n}\leq \varepsilon},\quad m \in L^0, \text{and }\varepsilon \in L^0_{++},
\end{equation*}
making $L^0$ to be a topological ring\footnote{That is, both the addition and scalar multiplication are continuous mappings with respect to the product topology.}.

From this point on, $\mathcal{X}$ denotes an $L^0$-module.
Given a set $\mathcal{C}\subseteq \mathcal{X}$, its \emph{$\sigma$-stable hull} is defined as
\begin{equation}
	\sigma\left( \mathcal{C} \right):=\Set{\sum 1_{A_i}X_i : \left[ A_i \right]\subseteq \mathscr{G}\text{ and }(X_i) \subseteq \mathcal{C}}.
	\label{}
\end{equation}
It  holds $\mathcal{C}\subseteq \sigma(\mathcal{C})$.
A set $\mathcal{C}\subseteq \mathcal{X}$ is called \emph{$\sigma$-stable} if $\mathcal{C}=\sigma(\mathcal{C})$.
A set $\mathcal{C}\subseteq \mathcal{X}$ is called \emph{$L^0$-convex} if $\lambda X+\left( 1-\lambda \right)Y\in\mathcal{C}$ for any $\lambda \in L^0$ with $0\leq \lambda \leq 1$ and $X,Y \in \mathcal{C}$.
By definition, $\mathcal{C}$ is $\sigma$-stable if and only if $\sum 1_{A_i}X_i \in \mathcal{C}$ for every $[A_i]\subseteq \mathscr{G}$ and $(X_i)\subseteq \mathcal{C}$.
In the following, $\mathcal{K}\subseteq \mathcal{X}$ will be an $L^0$-convex cone\footnote{That is, $\lambda X \in \mathcal{K}$ for any $\lambda \in L^0_{++}$ and $X \in \mathcal{K}$.} containing $0$.
Such an $L^0$-convex cone defines an $L^0$-module preorder\footnote{That is, $\lambda X+Z\succcurlyeq \lambda Y+Z$ for any $\lambda\in L^0_+$ and $Z \in \mathcal{X}$, whenever $X\succcurlyeq Y$ for $X,Y \in \mathcal{X}$.} $\succcurlyeq$ on $\mathcal{X}$, given by $X\succcurlyeq Y$ if $X-Y \in \mathcal{K}$.
We say a set $\mathcal{C}\subseteq \mathcal{X}$ is \emph{monotone} with respect to $\mathcal{K}$, or just monotone if there is no ambiguity about $\mathcal{K}$, if $\mathcal{C} +\mathcal{K} =\mathcal{C}.$

Working with (quasi)concave functions, we adopt the convention, $\infty -\infty:=-\infty$ and $0\cdot\pm\infty=0$.
We say that a function $F:\mathcal{X}\to \bar{L}^0$ is
\begin{itemize}
	\item \emph{$L^0$-local} if $F\left( 1_A X +1_{A^c}Y\right)=1_A F\left( X \right)+1_{A^c}F(Y)$;
	\item \emph{$L^0$-quasiconcave} if $F(\lambda X+\left( 1-\lambda \right)Y)\geq F(X)\wedge F(Y)$;
	\item \emph{$L^0$-concave} if $F(\lambda X+(1-\lambda)Y)\geq \lambda F(X)+(1-\lambda)F(Y)$;
	\item \emph{monotone with respect to $\mathcal{K}$} if $F(X) \geq F(Y)$, whenever $X\succcurlyeq Y$;
\end{itemize}
for any $X,Y \in \cX$, $\lambda \in L^0$ and $0\leq \lambda \leq 1$, and any $A \in \mathscr{G}$.
It can be shown that $F$ is local if and only if
\begin{equation}
	F\left( \sum 1_{A_i}X_i \right)=\sum 1_{A_i}F\left( 1_{A_i}X_i \right)=\sum 1_{A_i}F\left( X_i \right),
	\label{eq:localitywichtig}
\end{equation}
for every $[A_i]\subseteq \mathscr{G}$ and $(X_i)\subseteq \mathcal{X}$, as well as if and only if
\begin{equation}
	1_{A}F\left(X \right)=1_{A}F\left( 1_{A}X \right),
	\label{eq:localitywichtig2}
\end{equation}
for every $A \in \mathscr{G}$ and $X \in \mathcal{X}$.
A local function  $F$  of two arguments is called  \emph{jointly local}.

We further say that $F$ is
\begin{itemize}
	\item	\emph{$L^0$-linear} if $F$ takes values in $L^0$ and $F\left( m X+n Y \right)=m F\left( X \right)+n F\left( Y \right)$;
	\item \emph{positive homogeneous} if $F(\lambda X) = \lambda F(X)$;
	\item \emph{scale invariant} if $F(\lambda X)= F(X)$;
	\item \emph{cash additive in direction of $\kappa \in \mathcal{K}\setminus 0$} if $F(X + m\kappa) = F(X) + m$;
\end{itemize}
for any $X,Y \in \mathcal{X}$, any $m,n \in L^0$, and any $\lambda \in L^0_{++}$.

We now suppose that $\mathcal{X}$ is a locally $L^0$-convex topological $L^0$-module, see \citep[Definition 2.2]{FilipovicKupperVogelpoth2009}.
We denote by $\mathcal{X}^\ast$ its $L^0$-dual, that is, the set of all continuous $L^0$-linear functionals from $\mathcal{X}$ to $L^0$.
The $L^0$-dual  $\mathcal{X}^\ast$ is an $L^0$-module itself.
The weak topology, denoted by $L^0$-$\sigma\left( \mathcal{X},\mathcal{X}^\ast \right)$,  is the coarsest topology in $\cX$ for which the mappings
\begin{equation*}
	X \mapsto Z\left( X \right),\quad X \in \mathcal{X},
\end{equation*}
are continuous for any $Z\in\mathcal{X}^\ast$.

For a function $F:\mathcal{X}\to \bar{L}^0$ and for $m\in\bar{L}^0$,  we denote by $\cA^m$ the corresponding upper level set, that is  $\mathcal{A}^m:=\set{X \in \mathcal{X} \mid F(X)\geq m}$.
A function $F:\mathcal{X}\to \bar{L}^0$ is \emph{upper semicontinuous} if its upper level sets $\mathcal{A}^m$ are closed for all $m \in \bar{L}^0$.

It was shown in \cite{FilipovicKupperVogelpoth2009,KupperVogelpoth2009} that $F:\mathcal{X}\to \bar{L}^0$ is $L^0$-quasiconcave or monotone if and only if its upper level sets $\mathcal{A}^m$ are $L^0$-convex or monotone, for any $m \in \bar{L}^0$.
It is also known that $F$ is $L^0$-concave (resp. $L^0$-local) if and only if its hypograph $\hypo( F):= \set{(X,m)\in \mathcal{X}\times \bar{L}^0 \mid\alpha(X)\geq m}$ is $L^0$-convex\footnote{Even if $\bar{L}^0$ is not an $L^0$-module, using the convention  $\infty-\infty=\infty$ and $0 \cdot \infty=0$ on $\bar{L}^0$ we get the analogous results.} (resp.  $\sigma$-stable).

A set $\mathcal{B}\subseteq L^0$ is \emph{upward directed}, respectively \emph{downward directed}, if $X\wedge Y$, respectively $X\vee Y$, belongs to $\mathcal{B}$, for any $X,Y \in \mathcal{B}$.
In case of an upward directed, respectively downward directed, set, its essential supremum, respectively essential infimum, is attained by an increasing, respectively decreasing, sequence in this set, see \citep[Appendix A5]{FollmerSchiedBook2004}.
Similar results hold true for family of sets. If $(A_i)\subseteq \mathcal{G}$ is upward, respectively downward, directed with respect to the inclusion preorder, then there exists  essential supremum\footnote{That is, if $B \in \mathcal{G}$ is such that $A_i \subseteq B\subseteq A$ for all $i$, it holds $P[A\Delta B]=0$.}, respectively essential infimum, $A \in \mathcal{G}$, see \citep[Lemma 2.9]{FilipovicKupperVogelpoth2009}.

Throughout this paper, if no confusion may arise, we will often drop the reference to $L^0$ for all concepts from convex analysis.

\section{Robust Representation of Conditional Assessment Indices}\label{sec:AIGEneral}
In this section we follow the lines of \citep{DrapeauKupper2010}, extending the setup and the results presented therein to the \textit{conditional} case.
In the rest of this section we fix a cone $\cK\subseteq\cX$, and the monotonicity will be understood with respect to this cone.

\subsection{Conditional Assessment Indices and Conditional Risk Acceptance Family}
The main object studied in this paper is the conditional assessment index defined as follows.
\begin{definition}	
	A \emph{conditional assessment index} is a function $\alpha:\mathcal{X}\to \bar{L}^0$, which is local, quasiconcave, and monotone.\footnote{Recall that all concepts, such as  quasiconcave, local, etc., are understood in the $L^0$-sense.}
\end{definition}
Analogously  to the one-to-one relation between risk measures and risk acceptance families discussed in \citep{DrapeauKupper2010}, we also obtain a one-to-one relation, stated in Theorem \ref{th:AccIandARF}, between conditional assessment indices and conditional risk acceptance families defined below.
\begin{definition} A \textit{conditional risk acceptance family} is a family $\mathcal{A}:=(\mathcal{A}^m)_{m\in \bar{L}^0}$ of sets in $\cX$, which is
	\begin{itemize}
		\item \emph{convex:} $\mathcal{A}^{m}$ is convex, for any $m \in \bar{L}^0$;
		\item \emph{decreasing:} $\mathcal{A}^{m} \subseteq \mathcal{A}^{n}$, for any $n,m \in \bar{L}^0$ such that $m\geq n$;
		\item \emph{monotone:} $\mathcal{A}^{m}+\mathcal{K}=\mathcal{A}^{m}$, for any $m \in \bar{L}^0$;
		\item \emph{jointly $\sigma$-stable:} $\mathcal{A}=(\mathcal{A}^m)_{m\in \bar{L}^0}=\set{(X,m) \in \mathcal{X}\times \bar{L}^0: X \in \mathcal{A}^m}\subseteq \mathcal{X}\times \bar{L}^0$ is $\sigma$-stable;
		\item \emph{left-continuous:} for every $m \in \bar{L}^0$, the following identity  holds true
$$
				\mathcal{A}^m=1_{B(m)}\bigcap_{\substack{n<m\text{ on }B(m) \\ n=-\infty \text{ on } B^c(m)}  }\mathcal{A}^{n}+1_{B^c(m)}\mathcal{X},
$$
where $B(m)=\set{m>-\infty}$.

	\end{itemize}
\end{definition}

\begin{remark}\label{rem:ss}
	Note that the joint $\sigma$-stability of $\cA$ is equivalent to the property that
	\begin{equation*}
		\sum 1_{A^i}\cA^{m_i} = \mathcal{A}^{\sum 1_{A^i} m^i },
	\end{equation*}
	for any sequence $(m^i)\subseteq \bar{L}^0$, and any  $[A^i]\subseteq \mathscr{G}$.
	In particular, taking $m^i=m, \ i\in\bN,$ we get that $\sum 1_{A^i}\cA^{m} = \mathcal{A}^{m}$, and consequently we obtain that the set $\cA^m$ is $\sigma$-stable.
\end{remark}

 Theorem \ref{th:AccIandARF} below will play a central role in the proof of the robust representation theorem (cf. Section~\ref{sec022}).
\begin{theorem} \label{th:AccIandARF}
	Given  a conditional  assessment index $\alpha$, the family $\mathcal{A}_{\alpha}=\left( \mathcal{A}_{\alpha}^{m}\right)_{m \in \bar{L}^0}$ of sets defined by
	\begin{equation}
		\mathcal{A}_{\alpha}^{m} :=\Set{X \in \mathcal{X} \, : \, \alpha\left( X \right)\geq m},\quad m \in \bar{L}^0,
		\label{eq:AccIndandARF01}
	\end{equation}
	is a conditional risk acceptance family.

Conversely, given a conditional risk acceptance family $\mathcal{A}=\left( \mathcal{A}^{m}\right)_{m \in \bar{L}^0}$, the function $\alpha_{\mathcal{A}}:\mathcal{X}\to \bar{L}^0$ defined by
	\begin{equation}
		\alpha_{\mathcal{A}}\left( X \right):=\esssup\Set{m \in \bar{L}^0 \, : \,  X  \in \mathcal{A}^{m}  },\quad X \in \mathcal{X},
		\label{eq:AccIndandARF02}
	\end{equation} 	
	is a conditional assessment index.

Furthermore, with the previous notation,  $\alpha_{\mathcal{A}_{\alpha}}=\alpha$ and $\mathcal{A}_{\alpha_{\mathcal{A}}}=\mathcal{A}$.
\end{theorem}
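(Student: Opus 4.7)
The plan is to treat the three assertions of the theorem in sequence, since the correspondence $\alpha \leftrightarrow \cA$ is built level-set by level-set.

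\medskip

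\emph{Step 1: $\cA_\alpha$ is a conditional risk acceptance family.} Starting from $\cA_\alpha^m=\{\alpha\geq m\}$, the convexity of each $\cA_\alpha^m$ follows from quasiconcavity of $\alpha$ and the monotonicity from monotonicity of $\alpha$, both recalled in the preliminaries. The decreasing property is immediate from the definition. Joint $\sigma$-stability reduces, via Remark~\ref{rem:ss}, to checking $\sum 1_{A_i}\cA_\alpha^{m_i}=\cA_\alpha^{\sum 1_{A_i} m_i}$, which is a direct translation of \eqref{eq:localitywichtig} applied to both $X$ and $m$. Left-continuity is the delicate point: the inclusion $\cA_\alpha^m\subseteq 1_{B(m)}\bigcap_n \cA_\alpha^n+1_{B^c(m)}\cX$ is obtained by noticing that $\alpha(X)\geq m\geq n$ on $B(m)$ and $n=-\infty$ on $B^c(m)$, hence $X\in\cA_\alpha^n$ for every admissible $n$. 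The reverse inclusion uses locality in the form \eqref{eq:localitywichtig2}: if $X$ lies in the intersection, then $\alpha(X)\geq n$ for every $n$ with $n<m$ on $B(m)$, $n=-\infty$ on $B^c(m)$, and taking essential supremum over a countable downward-directed family of such $n$'s approximating $m$ from below on $B(m)$ yields $1_{B(m)}\alpha(X)\geq 1_{B(m)}m$, which is all one needs (the $B^c(m)$ part being vacuous).

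\medskip

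\emph{Step 2: $\alpha_\cA$ is a conditional assessment index.} Set $M_X:=\{m\in\bar L^0:X\in\cA^m\}$, so $\alpha_\cA(X)=\esssup M_X$. First I would verify that $M_X$ is downward closed (by the decreasing property) and stable under pairwise maxima: given $m_1,m_2\in M_X$, write $m_1\vee m_2 = 1_A m_1 + 1_{A^c}m_2$ with $A=\{m_1\geq m_2\}$ and apply joint $\sigma$-stability to conclude $X=1_A X+1_{A^c}X\in 1_A\cA^{m_1}+1_{A^c}\cA^{m_2}=\cA^{m_1\vee m_2}$. Thus $M_X$ is upward directed and its essential supremum is attained by an increasing sequence. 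Monotonicity of $\alpha_\cA$ then follows from $M_X\subseteq M_Y$ whenever $Y\succcurlyeq X$, by monotonicity of each $\cA^m$. Locality is a direct consequence of joint $\sigma$-stability applied to the level-set description of $\alpha_\cA$. For quasiconcavity, pick attaining sequences $m_X^k\uparrow\alpha_\cA(X)$, $m_Y^k\uparrow\alpha_\cA(Y)$ in $M_X,M_Y$; then $n_k:=m_X^k\wedge m_Y^k$ places both $X,Y\in\cA^{n_k}$ by the decreasing axiom, convexity yields $\lambda X+(1-\lambda)Y\in\cA^{n_k}$, and passing to the limit gives $\alpha_\cA(\lambda X+(1-\lambda)Y)\geq \alpha_\cA(X)\wedge\alpha_\cA(Y)$.

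\medskip

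\emph{Step 3: the two compositions are the identity.} The identity $\alpha_{\cA_\alpha}=\alpha$ is immediate from the definitions, since $\esssup\{m:\alpha(X)\geq m\}=\alpha(X)$, the value $m=\alpha(X)$ being itself an admissible witness. For $\cA_{\alpha_\cA}=\cA$, the inclusion $\cA^m\subseteq \cA^m_{\alpha_\cA}$ is trivial. The reverse inclusion is where left-continuity of $\cA$ is indispensable, and this is the main obstacle I anticipate: given $\alpha_\cA(X)\geq m$ and any $n$ with $n<m$ on $B(m)$ and $n=-\infty$ on $B^c(m)$, I would use an increasing sequence $m_k\in M_X$ with $m_k\uparrow\alpha_\cA(X)$, combine it with $n$ through the upward-directedness/$\sigma$-stability established in Step~2 to produce an element $\tilde m_k\in M_X$ majorising $n$ on a set of probability approaching one, and conclude $X\in\cA^n$ by downward closure of $M_X$. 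Once $X\in\cA^n$ for every admissible $n$, left-continuity yields $X\in\cA^m$. The subtlety is precisely that the essential supremum need not be attained globally in $M_X$, so the two regimes on $B(m)$ and $B^c(m)$ must be patched via $\sigma$-stability rather than a single global inequality.
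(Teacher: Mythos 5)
Your proposal is correct and follows essentially the same route as the paper's proof: you obtain each axiom of $\cA_\alpha$ from the corresponding property of $\alpha$ via level sets, you exploit the upward directedness of $M_X$ (obtained from joint $\sigma$-stability) to extract attaining sequences for quasiconcavity, and the delicate identity $\cA_{\alpha_\cA}=\cA$ is handled exactly as in the paper by patching an approximating sequence from $M_X$ into a single element dominating $n$ via $\sigma$-stability. One small slip: in Step~1 the family of admissible $n$'s approximating $m$ from below on $B(m)$ is \emph{upward}-directed --- its essential supremum, not infimum, is attained by a monotone sequence --- so ``downward-directed'' should read ``upward-directed.''
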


\begin{remark}\label{rem:1to1fritelli}
	In the above result, $\mathcal{A}^m$ has to be indexed by $m\in\bar{L}^0$ rather than $m\in L^0$ to get a one-to-one correspondence.
	Indeed, suppose that our probability measure $P$ can be extended to $\mathscr{G}_1\supseteq\mathscr{G}$.
Let $\cX:=L^\infty_{\mathscr{G}}(\mathscr{G}_1)$, where $L^\infty_{\mathscr{G}}(\mathscr{G}_1)$ is defined as $L^\infty_{\cF}(\cE)$ in \cite[Section~4.2]{KupperVogelpoth2009}.
We  take an $A\in\mathscr{G}$, with $0<P[A]<1$.
The agent assess its utility exponentially conditioned on event $A$ whereas with a logarithm on $A^c$, that is
\begin{equation*}
    \alpha(X)=E\left[ 1_A\left( 1-e^{-X}) \right)+1_{A^c}\ln\left( X \right) \right], X\in L^\infty_{\mathscr{G}}(\mathscr{G}_1).
\end{equation*}
It follows that $\alpha(0)=1_A-\infty 1_{A^c}$ showing that $0 \not \in \mathcal{A}^m$ for every $m \in L^0$.
If we only take $m\in L^0$ into account, it would follow that $\alpha_{\mathcal{A}_\alpha}(0)=\infty\neq \alpha(0)$ and so the duality would fail.
\end{remark}

\begin{remark}
	Note that a version of  Theorem~\ref{th:AccIandARF} has been derived in \citep{FrittelliMaggis2011}.
	However, for a risk acceptance family we require joint $\sigma$-stability and an indexing by $\bar{L}^0$ rather than $L^0$; see also Remark~\ref{rem:1to1fritelli}.

	In contrast to the approach in the proof of the robust representation in \citep{FrittelliMaggis2011}, here the starting point for the Robust Representation Theorem~\ref{thm:robrep} will be the one-to-one correspondence between conditional assessment indices and conditional risk acceptance families stated in Theorem~\ref{th:AccIandARF}.
	We also note that Theorem~\ref{th:AccIandARF} is the conditional version of \citep[Theorem 1.7]{DrapeauKupper2010}. 	
\end{remark}

\begin{proof}
	\begin{enumerate}[label=\textit{Step \arabic*:},fullwidth]
		\item Let $\alpha$ be a conditional assessment index, and let the family of acceptance sets $\mathcal{A}_{\alpha}^{m}$, where $m \in \bar{L}^0,$ be defined as in \eqref{eq:AccIndandARF01}.
			By definition, $\mathcal{A}_{\alpha}$ is  decreasing.
			Furthermore, for any $m\in\bar{L}^0$, the set $\mathcal{A}^m_\alpha$ is convex and monotone, since it is an upper level set of a quasiconcave and monotone function.

			Next we show that $\mathcal{A}_{\alpha}$ is jointly $\sigma$-stable.
			Let $[A_i]\subseteq \mathscr{G}$ and $(X_i,m_i)_{i\in\bN}\subseteq \mathcal{A}_{\alpha}$, in particular $\alpha(X_i)\geq m_i, \ i\in\bN$.
			By definition of $\mathcal{A}_{\alpha}$, by locality of $\alpha$, and by \eqref{eq:localitywichtig}, it follows that $(X,m):=\sum 1_{A_i}(X_i,m_i)=(\sum 1_{A_i}X_i,\sum 1_{A_i}m_i)$ fulfills
			\begin{equation*}
				\alpha(X)=\alpha\left( \sum 1_{A_i}X_i \right)=\sum 1_{A_i}\alpha\left( 1_{A_i}X_i \right)=\sum 1_{A_i}\alpha\left( X_i \right)\geq \sum 1_{A_i}m_i=m,
			\end{equation*}
			so that $(X,m)\in \mathcal{A}_{\alpha}$, which shows the joint $\sigma$-stability of $\cA_\alpha$.

            Finally we prove the left-continuity of $\mathcal{A}_\alpha$.
            Let now $m\in \bar{L}^0$ and $B:=B(m)=\set{m>-\infty}$.
            By locality of $\alpha$, it follows that
            \begin{align*}
                \bigcap_{\substack{n<m\text{ on }B \\ n=-\infty \text{ on } B^c}  }\mathcal{A}^{n}_\alpha &= \bigcap_{n<m \text{ on }B} \mathcal{A}^{1_Bn-1_{B^c}\infty}=\Set{X \in \mathcal{X}: \alpha(X)\geq 1_B n- 1_{B^c}\infty \text{ for all } n<m \text{ on }B}\\
                &=\Set{X \in \mathcal{X}: \alpha(X)\geq 1_Bm-1_{B^c}\infty=m}=\mathcal{A}^m_\alpha
            \end{align*}
            By joint locality of $\mathcal{A}_\alpha$ and the fact that $\mathcal{A}_\alpha^{-\infty}=\set{X \in \mathcal{X}: \alpha(X)\geq -\infty}=\mathcal{X}$, it follows that
            \begin{equation*}
                1_B\bigcap_{\substack{n<m\text{ on }B \\ n=-\infty \text{ on } B^c}  }\mathcal{A}^{n}_\alpha+1_{B^c}\mathcal{X}=1_B\mathcal{A}^m_\alpha+1_{B^c}\mathcal{A}^{-\infty}=\mathcal{A}^m
            \end{equation*}
            Hence, left-continuity of $\cA_\alpha$ is proved.

		\item Conversely, assume $\mathcal{A}=(\mathcal{A}^m)_{m\in\bar{L}^0}$ to be an acceptance family, and consider $\alpha_{\mathcal{A}}$ defined as in \eqref{eq:AccIndandARF02}. First we prove that $\alpha_\cA$ is monotone.
			Take $X,Y\in\mathcal{X}$ such that $X\geqc Y$.
			By monotonicity of $\mathcal{A}$, if $Y\in\mathcal{A}^m$, then $X\in\mathcal{A}^m$.
			Hence $\set{ m\in\bar{L}^0 \, : \, Y\in\mathcal{A}^m } \subseteq \set{m\in\bar{L}^0 \, : \, X\in\mathcal{A}^m}$.
			Taking $\esssup$ of both sides  in the last inclusion, the monotonicity of $\alpha_{\mathcal{A}}$ follows.

			Next we will show that $\alpha_{\mathcal{A}}$ is quasiconcave.
			In order to do this, we consider $X,Y\in\mathcal{X}$ and we let $m, n \in\bar{L}^0$ be such that $X\in\mathcal{A}^m$ and $Y\in\mathcal{A}^n$.
			Such $m,n$ exist, since by the left-continuity of $\mathcal{A}$ we have $\cA^{-\infty}=\cX$.
			Next, we set $\widetilde{m}=m\wedge n,$ and from the decreasing property of $\mathcal{A}$ we conclude that $X,Y\in\mathcal{A}^{\widetilde{m}}$.
			Now, we take $\lambda \in L^0$ such that $0\leq \lambda\leq 1$.
			By convexity of $\mathcal{A}$, we get that the convex combination $Z:=\lambda X +(1-\lambda)Y\in\mathcal{A}^{\widetilde{m}}$, and hence $\alpha_{\mathcal{A}}(Z)\geq \widetilde{m}$.
			Consequently,
			$$
			\alpha_{\mathcal{A}}(Z)\geq \esssup \set{m\in \bar{L}^0 \, : \,  X\in\cA^m} \wedge  \esssup\set{n\in \bar{L}^0 \, : \,  Y\in\cA^n}.
			$$
			Thus, we conclude that $\alpha_{\mathcal{A}}(Z)\geq \alpha_{\mathcal{A}}(X) \wedge \alpha_{\mathcal{A}}(Y)$, which proves quasiconcavity of $\alpha_\cA$.

			It remains to prove locality of $\alpha_\cA$.
			For this, let $A\in\mathscr{G}$, $X\in\mathcal{X}$, and consider $m\in\bar{L}^0$ such that $X\in\mathcal{A}^{m}$.
			Again, such  $m$ exists since by the left-continuity of $\mathcal{A}$ we have $\mathcal{A}^{-\infty}=\mathcal{X}$.
			From  $\sigma$-stability of $\cA$, and from the fact that $0\in\cA^{-\infty}$, we have $1_{A}X \in\cA^{1_Am - 1_{A^c}\infty},$ which implies that $\alpha_{\cA}(1_AX)\geq 1_A m - 1_{A^c}\infty$.
			Hence, $1_A\alpha_{\cA}(1_AX)\geq 1_Am$, and thus, taking the essential supremum with respect to $m$ in this inequality, we get
			\begin{equation}\label{eq:cacy}
				1_A \alpha_\cA(1_A X)  \geq 1_A\alpha_{\cA}(X).
			\end{equation}

			Now, let $n\in \bar{L}^0$ such that  $1_AX\in\cA^{n}$. Since $X\in\cA^{-\infty}$, we get by $\sigma$-stability of $\mathcal{A}$, see Remark~\ref{rem:ss},
			$$
			X = 1_A(1_AX)+1_{A^c}X \in \cA^{1_An - 1_{A^c} \infty}.
			$$
			This implies that $\alpha_{\cA}(X) \geq 1_An - 1_{A^c}\infty$, and consequently $1_A\alpha_{\cA}(X)\geq 1_A n$. Taking the essential supremum with respect to $n$ in the last inequality, we get $1_A\alpha_{\cA}(X) \geq 1_A \alpha(1_AX)$, which, jointly with \eqref{eq:cacy}, demonstrates locality of $\alpha_\cA$.

			Thus $\alpha_{\mathcal{A}}$ is a conditional assessment index.

		\item We finally prove the last statement of Theorem \ref{th:AccIandARF}.
			Assume that $\alpha$ is a conditional assessment index.
			Then,  $\mathcal{A}_{\alpha}$ is a conditional risk acceptance family, and therefore $\alpha_{\mathcal{A}_{\alpha}}$ is a conditional assessment index.
			Note, that for any $X\in \mathcal{X}$ we have
			\begin{equation*}
				\alpha_{\mathcal{A}_{\alpha}}(X)=\esssup \Set{m\in \bar{L}^0\,:\, X\in \cA_\alpha^m}=\esssup \Set{m\in \bar{ L}^0\,:\, \alpha(X)\geq m}=\alpha(X),
			\end{equation*}
			and so $\alpha=\alpha_{\mathcal{A}_{\alpha}}$.

			Assume now that $\mathcal{A}$ is a conditional risk acceptance family.
			We will show that $\mathcal{A}_{\alpha_{\mathcal{A}}}^m=\mathcal{A}^m$ for any $m\in \bar L^0$, from which we deduce that $\mathcal{A}_{\alpha_{\mathcal{A}}}=\mathcal{A}$.
			If $m=-\infty$, then $\mathcal{A}_{\alpha_\mathcal{A}}^{-\infty} = \set{ X\in\mathcal{X} : \alpha_{\mathcal{A}}\left(X\right)\geq -\infty} = \mathcal{X}$, and by the left-continuity of $\mathcal{A}$ we get that $\mathcal{A}_{\alpha_{\mathcal{A}}}^{-\infty}=\mathcal{A}^{-\infty}$.
			Next, assume that $m>-\infty$.
			Given $\varepsilon\in L^0_{++}$, we claim that $\alpha_{\mathcal{A}}(X)\geq m$ implies that $X\in\mathcal{A}^{m-\varepsilon}$.
			Indeed, $\mathcal{A}$ being jointly $\sigma$-stable, $\set{n\in \bar{L}^0 : X\in\mathcal{A}^n}$ is upward directed.
			Hence, there exists an increasing sequence $(n_i)\subseteq\set{n\in\bar{L}^0 : X\in\mathcal{A}^n}$, such that $n_i\uparrow \alpha_{\mathcal{A}}(X)$.
			Let $A_i:=\set{n_i\geq m-\varepsilon}$ and $B_i := A_i\setminus A_{i-1}$, for $i\in\bN$, and put $B_0:=A_0$.
			Then $[B_i]\subseteq\mathscr{G}$, and $X\in\mathcal{A}^{n_i}$ for every $i\in\bN$.
			By $\sigma$-stability of $\mathcal{A}$, we get that $\sum 1_{B_i}(X,n_i)=(X,\sum 1_{B_i}n_i)\in \mathcal{A}$.
			However, by construction, $\widetilde{m} := \sum 1_{B_i}n_i \geq m-\varepsilon$, and thus $X\in\mathcal{A}^{m-\varepsilon}$.
			Consequently, we deduce
			\begin{align*}
				\mathcal{A}_{\alpha_{\mathcal{A}}}^{m} & = \Set{ X\in \mathcal{X} :  \alpha_{\mathcal{A}}(X) \geq m}  = \Set{ X\in\mathcal{X} : \esssup\set{ n\in \bar{L}^0 : X\in\mathcal{A}^n}\geq m} \\
				& = \Set{ X\in\mathcal{X} : X\in\cA^{m-\varepsilon} \textrm{ for all } \varepsilon\in L^0_{++} }  = \bigcap\limits_{n<m} \mathcal{A}^n.
			\end{align*}
			Finally, since in view of the left-continuity of $\mathcal{A}$ we have  $\cap_{n<m} \mathcal{A}^n= \mathcal{A}^m$, see Remark~\ref{rem:ss}, we obtain that $\mathcal{A}_{\alpha_{\mathcal{A}}}^{m} = \mathcal{A}^m$.
			For the general case $m\in\bar{L}^0$, we write $m=-1_{A^c}\infty + 1_{A}{m}$, with $A=\set{m>-\infty}$, and using  $\sigma$-stability we conclude that $\mathcal{A}_{\alpha_{\mathcal{A}}}^m=\mathcal{A}^m.$
			Thus, $\mathcal{A}_{\alpha_{\mathcal{A}}}=\mathcal{A}$.
	\end{enumerate}

\end{proof}

\subsection{Robust representation}\label{sec022}
In this subsection we prove a robust representation theorem for conditional assessment indices, which is one of the main results of this paper.
From now on we suppose that $\mathcal{X}$ is a conditional locally convex topological module.\footnote{See \cite{FilipovicKupperVogelpoth2009} and Appendix~\ref{appendix} for background on conditional modules.}
We further suppose that the cone $\mathcal{K}$ is closed, and we define the associated \emph{polar cone} by
\begin{equation*}
	\mathcal{K}^\circ:=\Set{X^\ast \in \mathcal{X}^\ast \, : \, \langle X^\ast,X\rangle \geq 0\text{ for all }X \in \mathcal{K}}.
\end{equation*}

We will now introduce two concepts, which are pivotal for our studies.

\begin{definition}\label{def:minRiskMeasure}
	A \emph{conditional risk function} is a function $R:\mathcal{K}^\circ \times \bar{L}^0\to \bar{L}^0$ such that
	\begin{enumerate}[label=(\roman*)]
		\item \label{cond:rm00} it is jointly local;

		\item \label{cond:rm01} the map	$s\mapsto R(X^\ast,s)$ is increasing and right-continuous for any $X^\ast \in \mathcal{K}^\circ$.
	\end{enumerate}
	A conditional risk function $R$ is called \emph{minimal} if
	\begin{enumerate}[label=(\roman*)]
			\setcounter{enumi}{2}
		\item \label{cond:rm02} it is jointly quasiconvex, and $R(\lambda X^\ast,s)=R\left(X^\ast,s/\lambda\right)$ for all $\lambda\in L^0_{++}$;
		\item \label{cond:rm03} it has a uniform asymptotic maximum, which means
			\begin{equation*}
				\esssup_{s \in L^0} R\left( X^\ast,s \right)=\esssup_{s \in L^0} R\left( Y^\ast,s \right),
			\end{equation*}
			for any $X^\ast,Y^\ast \in \mathcal{K}^\circ$;

\item \label{cond:rm04} the left-continuous version (in the second argument) $R^-\left(X^\ast,s \right)$  is jointly lower semicontinuous.
	\end{enumerate}
	The set of all conditional minimal risk functions is denoted by $\mathcal{R}^{\min}$.
\end{definition}
\begin{remark}
	Note that Condition \ref{cond:rm03} is equivalent to
	\begin{enumerate}[label=(\roman*${}^\prime$)]
			\setcounter{enumi}{3}
		\item\label{cond:rm03bis} it has a uniform asymptotic maximum, which means
			\begin{equation*}
				R^-\left( X^\ast,\infty \right)= R^-\left( Y^\ast,\infty \right),
			\end{equation*}
			for any $X^\ast,Y^\ast \in \mathcal{K}^\circ$;
	\end{enumerate}
\end{remark}

\begin{definition}\label{def:ConMaxPen}
	A \emph{conditional maximal penalty function} is a function $\pi :\mathcal{K}^\circ \times \bar{L}^0\to \bar{L}^0$ such that
	\begin{enumerate}[label=(\alph*)]
		\item \label{cond:pen00} it is jointly local;
		\item \label{cond:pen01} the map $m\mapsto \pi\left( X^\ast,m \right)$ is increasing and right-continuous for any $X^\ast \in \mathcal{K}^\circ$;
		\item \label{cond:pen02} it is positive homogeneous in the first argument\footnote{$\pi\left(\lambda X^\ast,m  \right)=\lambda\pi\left( X^\ast,m \right)$ for all $\lambda \in L^0_{++}$ and $X^\ast,m \in \mathcal{K}^\circ,\bar{L}^0$.} and concave in the first argument;
		\item \label{cond:pen03} it is  maximal invariant, that is, if $\pi\left( X^\ast,m \right)=\infty$ for some $X^\ast \in \mathcal{K}^\circ$ and $m\in\bar{L}^0$, then $\pi\left( Y^\ast,m \right)=\infty$ for all $Y^\ast \in \mathcal{K}^\circ$;

		\item \label{cond:pen04} it is upper semicontinuous in the first argument.
	\end{enumerate}
	The set of all conditional maximal penalty functions is denoted by $\mathcal{P}^{\max}$.	
\end{definition}

\begin{proposition}\label{prop:inverse}
	The set of conditional minimal risk functions $R \in \mathcal{R}^{\min}$ and the set of conditional maximal penalty functions $\pi \in \mathcal{P}^{\max}$ are related in the following manner
	\begin{align}
		\pi^{(-1,r)}\left( X^\ast,s\right)& \in \mathcal{R}^{\min},
		\label{eq:piinverse}\\
		R^{(-1,r)} \left( X^\ast,m \right)&\in \mathcal{P}^{\max},
		\label{eq:Rinverse}
	\end{align}
	where $\pi^{(-1,r)}\left( X^\ast,s\right)$ and $R^{(-1,r)} \left( X^\ast,m \right)$ denote the right-inverse\footnote{For further details apply Definition \ref{defn:inverse} for the second argument of $\pi$ and $R$, respectively.} in the second argument for fixed $X^\ast \in \mathcal{X}^\ast$. Moreover, the relationship is one-to-one.
\end{proposition}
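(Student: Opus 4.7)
The plan is to leverage the classical duality between increasing right-continuous functions on $\bar{L}^0$ and their right-inverses, suitably adapted to the conditional/$L^0$-setting, so as to translate each defining property of $\mathcal{R}^{\min}$ into a defining property of $\mathcal{P}^{\max}$ and vice versa. The bijectivity will then follow from the involutive identity $(f^{(-1,r)})^{(-1,r)} = f$, valid for any increasing right-continuous $f:\bar{L}^0\to\bar{L}^0$, which I would invoke as an Appendix lemma on the calculus of $L^0$-right-inverses (this is exactly what Definition~\ref{defn:inverse} sets up). Thus the body of the proof reduces to two property-by-property verifications.

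For \eqref{eq:piinverse}, I would fix $\pi\in\mathcal{P}^{\max}$, set $R:=\pi^{(-1,r)}$, and check each item of Definition~\ref{def:minRiskMeasure} in order. Monotonicity and right-continuity in $s$, namely \ref{cond:rm01}, come for free from the general $L^0$-right-inverse calculus, since $m\mapsto\pi(X^\ast,m)$ is increasing and right-continuous by \ref{cond:pen01}. Joint locality \ref{cond:rm00} I would obtain by writing $R(X^\ast,s)$ as an $\esssup$ over the sublevel sets of $\pi(X^\ast,\cdot)$ and then pulling $\sum 1_{A_i}$ inside using joint locality \ref{cond:pen00} of $\pi$, carefully tracking the $\pm\infty$ conventions on $B(m)=\{m>-\infty\}$. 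The scaling identity in \ref{cond:rm02} is a one-line computation from positive homogeneity \ref{cond:pen02}: $R(\lambda X^\ast,s)=\essinf\{m:\lambda\pi(X^\ast,m)>s\}=R(X^\ast,s/\lambda)$. Joint quasiconvexity of $R$ will reduce, via an $\varepsilon$-approximation in the essential-infimum defining $R$, to \emph{concavity in the first argument} of $\pi$: if $\pi(X^\ast,r+\varepsilon)>s$ and $\pi(Y^\ast,r+\varepsilon)>t$, then by concavity $\pi(\lambda X^\ast+(1-\lambda)Y^\ast,r+\varepsilon)>\lambda s+(1-\lambda)t$, so $R(\lambda X^\ast+(1-\lambda)Y^\ast,\lambda s+(1-\lambda)t)\leq r+\varepsilon$. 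The uniform asymptotic maximum \ref{cond:rm03}, in its equivalent form \ref{cond:rm03bis}, is exactly the statement that $\{m:\pi(X^\ast,m)=\infty\}$ does not depend on $X^\ast$, which is precisely maximal invariance \ref{cond:pen03}. Finally, joint lower semicontinuity of the left-continuous version $R^{-}$, namely \ref{cond:rm04}, will follow from upper semicontinuity of $\pi$ in the first argument \ref{cond:pen04} via the characterization $\{R^{-}\leq r\}=\bigcap_{\varepsilon>0}\{\pi(\cdot,r+\varepsilon)\geq s+\varepsilon\}^c$ and a standard closed-level-set argument.

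The converse implication \eqref{eq:Rinverse} is handled entirely symmetrically: given $R\in\mathcal{R}^{\min}$, set $\pi:=R^{(-1,r)}$ and verify (a)--(e). Items \ref{cond:pen00}--\ref{cond:pen01} are dual to \ref{cond:rm00}--\ref{cond:rm01}; positive homogeneity in the first argument \ref{cond:pen02} comes from the scaling property in \ref{cond:rm02}, concavity in the first argument from joint quasiconvexity of $R$ by dualizing the $\varepsilon$-approximation; maximal invariance \ref{cond:pen03} from the uniform asymptotic maximum \ref{cond:rm03bis}; and upper semicontinuity in the first argument \ref{cond:pen04} from the lower semicontinuity of $R^{-}$ in \ref{cond:rm04}. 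The one-to-one claim is then closed by applying the double right-inverse identity once to $\pi$ and once to $R$.

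The hard part will not be any individual item but the \emph{consistent $L^0$-bookkeeping} that underlies all of them: the essential-infimum formulas defining $\pi^{(-1,r)}$ and $R^{(-1,r)}$ involve families of level sets indexed by $\bar{L}^0$, and to preserve joint locality one must first reduce such families to countable, directed families via the $\sigma$-stability/essential-supremum device of \citep[Lemma~2.9]{FilipovicKupperVogelpoth2009}. In particular, showing that the right-inverse intertwines properly with $\sum 1_{A_i}(\cdot)$, and that upper/lower semicontinuity in the first argument transfers to the left-continuous version of the inverse in the joint variables, is where the main technical effort lies. I would therefore package these $L^0$-calculus facts as a small suite of Appendix lemmas on local, jointly increasing, right-continuous $L^0$-functions and their inverses, so that each of the five checks above becomes a short, direct application.
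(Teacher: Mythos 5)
Your proposal follows the paper's proof essentially step for step: both build on the Appendix calculus of conditional right-inverses (Proposition~\ref{prop:leftrightinverse} and Remark~\ref{rem:inverseleftright}), obtain joint locality and (ii)/(b) directly from the general left/right-inverse machinery, verify the remaining properties one at a time, and close the bijectivity claim with the double right-inverse identity $(F^{(-1,r)})^{(-1,r)}=F^+$. One small slip worth correcting: your proposed level-set identity $\set{R^{-}\leq r}=\bigcap_{\varepsilon>0}\set{\pi(\cdot,r+\varepsilon)\geq s+\varepsilon}^c$ is not the right relation and mixes variables; the clean dual identity supplied by \eqref{C4} (with $\pi=\pi^+$ right-continuous and $R^-=\pi^{(-1,l)}$) is simply $\set{(X^\ast,s):R^-(X^\ast,s)\leq m}=\set{(X^\ast,s):\pi(X^\ast,m)\geq s}$, and the paper uses precisely this hypograph/sublevel correspondence to transfer closedness and convexity in one stroke, hence upper semicontinuity and concavity of $\pi(\cdot,m)$ to joint lower semicontinuity and quasiconvexity of $R^-$, rather than handling each property separately with an $\varepsilon$-approximation as you sketch.
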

\noindent The proof of this proposition is deferred to the Appendix \ref{appendix:02}.

\begin{proposition}\label{thm:01}
	Let $\mathcal{C}\subseteq \mathcal{X}$ be a closed, convex, monotone and $\sigma$-stable set.
	Then, there exists a unique local function $\pi:\mathcal{K}^\circ \to \bar{L}^0$ such that it is
	\begin{enumerate}[label=(\alph*)]
		\item \label{cond:pen02bis}  positive homogeneous and concave;
		\item \label{cond:pen03bis}  maximal invariant;
		\item \label{cond:pen04bis}  upper semicontinuous,
	\end{enumerate}
	and such that
	\begin{equation}
		X \in \mathcal{C}\quad \Longleftrightarrow \quad \langle X, X^\ast\rangle \geq \pi\left( X^\ast \right), \quad \text{for all }  X^\ast \in \mathcal{K}^{\circ}.
		\label{robust1}
	\end{equation}
	Moreover, this function is explicitly given by the relation
	\begin{equation*}
		\pi\left( X^\ast \right)=\chi_{\mathcal{C}}^\star\left( X^\ast \right):=\essinf_{X \in \mathcal{C}}\langle X^\ast,X\rangle,\quad \text{for all }  X^\ast \in \mathcal{K}^{\circ}.
	\end{equation*}
\end{proposition}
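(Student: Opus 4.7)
The plan is to set
\begin{equation*}
\pi(X^\ast) := \essinf_{X\in\mathcal{C}}\langle X^\ast,X\rangle, \qquad X^\ast\in\mathcal{K}^\circ,
\end{equation*}
and to verify every requested property, with the genuine work concentrated in the non-trivial direction of \eqref{robust1}. First I would dispatch the structural properties of $\pi$. Positive homogeneity is immediate from $L^0$-linearity of the pairing; concavity in $X^\ast$ follows because an essential infimum of an $L^0$-linear family is concave; upper semicontinuity holds since $\pi$ is an $\essinf$ of the continuous $L^0$-linear maps $X^\ast\mapsto\langle X^\ast,X\rangle$. Locality exploits the $\sigma$-stability of $\mathcal{C}$: for $A\in\mathscr{G}$ and $X^\ast,Y^\ast\in\mathcal{K}^\circ$, the inequality $\pi(1_AX^\ast+1_{A^c}Y^\ast)\geq 1_A\pi(X^\ast)+1_{A^c}\pi(Y^\ast)$ is termwise, while the converse follows by gluing approximating elements through $X=1_AX_1+1_{A^c}X_2\in\mathcal{C}$. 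Maximal invariance is essentially vacuous: if $\mathcal{C}\neq\emptyset$, any $X_0\in\mathcal{C}$ gives $\pi(X^\ast)\leq\langle X^\ast,X_0\rangle\in L^0$ everywhere, so $\pi$ cannot attain $+\infty$; if $\mathcal{C}=\emptyset$, then $\pi\equiv+\infty$ identically. The easy direction of \eqref{robust1} is immediate from the definition of $\essinf$.

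The main obstacle is the converse of \eqref{robust1}: for $X\notin\mathcal{C}$, one must produce $X^\ast\in\mathcal{K}^\circ$ with $\langle X^\ast,X\rangle<\pi(X^\ast)$ on a set of positive probability. I would invoke a conditional Hahn--Banach separation theorem in the locally $L^0$-convex module $\mathcal{X}$, as developed in \citep{FilipovicKupperVogelpoth2009} and collected in the paper's Appendix, applied to the closed, convex, $\sigma$-stable set $\mathcal{C}$ and the point $X$; this yields some $Y^\ast\in\mathcal{X}^\ast$ and $B\in\mathscr{G}_+$ with $1_B\langle Y^\ast,X\rangle<1_B\essinf_{Z\in\mathcal{C}}\langle Y^\ast,Z\rangle$. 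The delicate step is forcing the separator into $\mathcal{K}^\circ$: if $\langle Y^\ast,k\rangle<0$ held on a positive-measure set for some $k\in\mathcal{K}$, then monotonicity $\mathcal{C}+\mathcal{K}=\mathcal{C}$ would let $Z+\lambda k\in\mathcal{C}$ drive $\essinf_{Z\in\mathcal{C}}\langle Y^\ast,Z\rangle$ to $-\infty$ on the intersection of that set with $B$ as $\lambda\uparrow\infty$, destroying the strict separation. A localisation step (replacing $Y^\ast$ by $1_BY^\ast$ and completing with zero off $B$ via $\sigma$-stability of $\mathcal{X}^\ast$) then produces the required witness lying globally in $\mathcal{K}^\circ$.

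For uniqueness, suppose $\pi'$ has all the stated properties and satisfies \eqref{robust1}. One direction is easy: for every $X\in\mathcal{C}$, $\langle X^\ast,X\rangle\geq\pi'(X^\ast)$, whence $\pi(X^\ast)=\essinf_{X\in\mathcal{C}}\langle X^\ast,X\rangle\geq\pi'(X^\ast)$. For the reverse inequality $\pi\leq\pi'$, I would assume for contradiction that $\pi(X^\ast_0)>\pi'(X^\ast_0)$ on some $A\in\mathscr{G}_+$; combining positive homogeneity, concavity, and upper semicontinuity of $\pi'$ with the already-proved representation $\mathcal{C}=\{X:\langle X^\ast,X\rangle\geq\pi(X^\ast)\text{ for all }X^\ast\in\mathcal{K}^\circ\}$, a second conditional Hahn--Banach separation would produce a point $Y$ lying in the $\pi'$-half-space for every $X^\ast\in\mathcal{K}^\circ$ but strictly outside the $\pi$-half-space at $X^\ast_0$. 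Such a $Y$ would then belong to the $\pi'$-intersection yet not to $\mathcal{C}$, contradicting \eqref{robust1} applied to $\pi'$. This closes the argument.
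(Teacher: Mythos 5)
Your proposal is correct in its overall strategy and takes a genuinely different route from the paper's. The paper packages the entire argument through the conditional characteristic function $\chi_{\mathcal{C}}$ (Definition~\ref{def:support}, Proposition~\ref{prop:support}) and two applications of the conditional Fenchel--Moreau theorem (Proposition~\ref{prop:FM}): first it recognizes $\pi=\chi^\star_{\mathcal{C}}$ and uses $\chi_{\mathcal{C}}=\chi^{\star\star}_{\mathcal{C}}$ to obtain the hard direction of \eqref{robust1}, and then it applies Fenchel--Moreau on $\mathcal{X}^\ast$ to the candidate penalties $\pi^i$ to get uniqueness; it also treats $\mathcal{K}=\set{0}$ first and only afterwards restricts to $\mathcal{K}^\circ$ (Step~3 of the paper's proof). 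You instead invoke conditional Hahn--Banach separation directly, force the separator into $\mathcal{K}^\circ$ using $\mathcal{C}+\mathcal{K}=\mathcal{C}$, and localise with $1_BY^\ast$; this absorbs the paper's cone-reduction step into a single clean existence argument and is mathematically equivalent, since Fenchel--Moreau in the $L^0$-module setting is proved via Hahn--Banach. What the paper's detour through $\chi_{\mathcal{C}}$ buys is systematic $\sigma$-stability bookkeeping: the set $A(X)$ in Definition~\ref{def:support} localises ``$X\notin\mathcal{C}$'' once and for all, whereas a direct separation argument has to be recast carefully on the set where separation is actually possible.

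The uniqueness step, however, is waved past a real subtlety. To produce the $Y$ you describe --- lying in every $\pi'$-half-space yet strictly outside the $\pi$-half-space at $X^\ast_0$ --- the separation must be carried out \emph{inside} $\mathcal{X}^\ast$ (separating a point from $\hypo(\pi')\subseteq\mathcal{X}^\ast\times L^0$), and the resulting functional must be identified with an element of $\mathcal{X}$ rather than of $(\mathcal{X}^\ast)^\ast$. This requires equipping $\mathcal{X}^\ast$ with the $L^0$-$\sigma(\mathcal{X}^\ast,\mathcal{X})$-topology and knowing that $\pi'$ is upper semicontinuous in \emph{that} topology; positive homogeneity then collapses $\pi'^{\star}$ to values in $\set{0,-\infty}$ and identifies $\set{Y:\pi'^{\star}(Y)=0}$ with $\mathcal{C}$ via \eqref{robust1} for $\pi'$, whence $\pi'=\chi^\star_{\mathcal{C}}=\pi$. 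This is precisely the paper's application of Fenchel--Moreau to $\widetilde{\pi}^i$ in Step~2. Your proposal should spell this out (or simply cite conditional Fenchel--Moreau on $\mathcal{X}^\ast$ as the paper does) rather than leaving it at ``a second Hahn--Banach would produce such a $Y$''; as written, the uniqueness argument has a gap that is filled by exactly the machinery the paper deploys there.
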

\noindent The proof of this proposition is also deferred to the Appendix~\ref{appendix:03}.

\noindent Finally, we are in the position to prove the main result of this section.
\begin{theorem}\label{thm:robrep} \mbox{}
	\begin{enumerate}[label=\textnormal{\textbf{(\roman*)}}]
		\item 	Let $\alpha:\mathcal{X}\to \bar{L}^0$ be an upper semicontinuous conditional assessment index.
			Then, $\alpha$ has the robust representation of the form
			\begin{equation}
				\alpha\left( X \right)=\essinf_{X^\ast \in \mathcal{K}^\circ} R\left(X^\ast,\langle X^\ast,X\rangle  \right),
				\label{eq:robrep}
			\end{equation}
			for a unique $R \in \mathcal{R}^{\min}$; 	
		\item For any conditional risk function $R$, the right hand-side of  \eqref{eq:robrep} defines an upper semi\-continuous conditional assessment index.
	\end{enumerate}
\end{theorem}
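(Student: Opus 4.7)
The plan is to convert $\alpha$ into its acceptance family via Theorem~\ref{th:AccIandARF}, apply Proposition~\ref{thm:01} to every upper level set in parallel, and then invert with Proposition~\ref{prop:inverse}. Since $\alpha$ is upper semicontinuous, each $\mathcal{A}_\alpha^m=\{X\in\mathcal{X}:\alpha(X)\geq m\}$ is closed; Theorem~\ref{th:AccIandARF} further gives convexity, monotonicity and joint $\sigma$-stability of the family. Proposition~\ref{thm:01} then yields, for each $m\in\bar L^0$, a unique $\pi(\cdot,m):\mathcal{K}^\circ\to\bar L^0$ which is local, positive homogeneous, concave, maximal invariant, and upper semicontinuous, such that
\begin{equation*}
 X\in\mathcal{A}_\alpha^m \;\Longleftrightarrow\; \langle X^\ast,X\rangle\geq \pi(X^\ast,m)\;\;\text{for all } X^\ast\in\mathcal{K}^\circ.
\end{equation*}

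Next I would verify that the bivariate function $\pi$ so assembled is a conditional maximal penalty function in the sense of Definition~\ref{def:ConMaxPen}. Monotonicity of $m\mapsto \pi(X^\ast,m)$ comes from the decreasing property of $(\mathcal{A}_\alpha^m)$; right-continuity in $m$ is a consequence of the left-continuity of $\mathcal{A}_\alpha$ (passing to approximating $n<m$ on $\{m>-\infty\}$ and using the essential supremum/infimum structure on $\bar L^0$); and joint locality of $\pi$ is inherited from the joint $\sigma$-stability of $\mathcal{A}_\alpha$ together with the locality of each slice $\pi(\cdot,m)$ from Proposition~\ref{thm:01}. Proposition~\ref{prop:inverse} then provides a unique $R\in\mathcal{R}^{\min}$ with $R=\pi^{(-1,r)}$ in the second variable, and the duality between those right-inverses reads
\begin{equation*}
 \langle X^\ast,X\rangle \geq \pi(X^\ast,m) \;\Longleftrightarrow\; R\bigl(X^\ast,\langle X^\ast,X\rangle\bigr)\geq m.
\end{equation*}
Combined with $\alpha(X)=\esssup\{m\in\bar L^0 : X\in\mathcal{A}_\alpha^m\}$ from Theorem~\ref{th:AccIandARF}, this gives $\alpha(X)\geq m$ iff $R(X^\ast,\langle X^\ast,X\rangle)\geq m$ for every $X^\ast\in\mathcal{K}^\circ$, whence $\alpha(X)=\essinf_{X^\ast\in\mathcal{K}^\circ} R(X^\ast,\langle X^\ast,X\rangle)$. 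Uniqueness of $R$ is inherited from the uniqueness statements in Propositions~\ref{thm:01} and~\ref{prop:inverse}.

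For part (ii) I would check the three defining properties of a conditional assessment index directly from the formula, together with upper semicontinuity. Monotonicity: $X\geqc Y$ gives $\langle X^\ast,X\rangle\geq \langle X^\ast,Y\rangle$ for every $X^\ast\in\mathcal{K}^\circ$, and the monotonicity of $R(X^\ast,\cdot)$ transfers to the essential infimum. Quasiconcavity: $X\mapsto R(X^\ast,\langle X^\ast,X\rangle)$ is an increasing transformation of an $L^0$-affine map, hence quasiconcave, and essential infima of quasiconcave functions are quasiconcave. Locality: the joint locality of $R$, the $L^0$-linearity of the pairing, and the commutation of the essential infimum with multiplication by $1_{A_i}$ for a partition $[A_i]\subseteq\mathscr{G}$ together yield locality of $\alpha$. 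Upper semicontinuity: for each fixed $X^\ast$ the map $X\mapsto R(X^\ast,\langle X^\ast,X\rangle)$ is upper semicontinuous because right-continuity of $R(X^\ast,\cdot)$ is exactly upper semicontinuity in $s$ and the pairing is continuous, so $\{X:\alpha(X)\geq m\}$ is an intersection of closed sets and hence closed.

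The main obstacle I anticipate is the careful bookkeeping in assembling the family $\{\pi(\cdot,m)\}_{m\in\bar L^0}$ supplied slice-by-slice by Proposition~\ref{thm:01} into a genuinely jointly local and right-continuous object, and then extracting the essential infimum representation from the ``for every $X^\ast$'' threshold characterization. This is where the subtle distinction between pointwise and essential infima in $L^0$ surfaces, and where $\sigma$-stability is needed in order to glue countably many thresholds together on a partition; much of this technical effort is absorbed into the two auxiliary Propositions~\ref{thm:01} and~\ref{prop:inverse}, proved in the Appendix.
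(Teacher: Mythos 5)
Your overall architecture — acceptance family via Theorem~\ref{th:AccIandARF}, slice-wise dualization via Proposition~\ref{thm:01}, inversion via Proposition~\ref{prop:inverse} — matches the paper's. But there is a genuine gap at the step where you assemble the slices $\pi(\cdot,m)$ into a single object and declare it a member of $\mathcal{P}^{\max}$.

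You claim that right-continuity of $m\mapsto\pi(X^\ast,m)$ follows from the left-continuity of the family $(\mathcal{A}_\alpha^m)$. This is false, and the direction of the argument you sketch (approximating by $n<m$) actually points the other way. Concretely: if $\mathcal{A}^n=\mathcal{C}$ for $n\leq 0$ and $\mathcal{A}^n=\mathcal{C}'\subsetneq\mathcal{C}$ for $n>0$, the family is left-continuous at $m=0$ (since $\mathcal{A}^0=\bigcap_{n<0}\mathcal{A}^n=\mathcal{C}$), yet $\pi(X^\ast,\cdot)=\essinf_{X\in\mathcal{A}^\cdot}\langle X^\ast,X\rangle$ jumps \emph{up} at $0$ and is left-continuous, not right-continuous, there. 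In other words, passing from the acceptance family to $\pi$ via $\essinf$ preserves left-continuity, whereas Definition~\ref{def:ConMaxPen}\,(b) and Proposition~\ref{prop:inverse} require right-continuity. The paper resolves this by introducing the left- and right-continuous regularizations $\pi^-\leq\pi\leq\pi^+$, showing $\pi^+\in\mathcal{P}^{\max}$, and then proving the sandwich identity $\alpha=\beta^-=\beta^+$ (equations \eqref{eq:Beta3}--\eqref{eq:bloedsinn01}); this is the nontrivial content you cannot skip. You could still reach the same $R$, since $\pi^-$, $\pi$, and $\pi^+$ have the same right-inverse, but you must first justify that replacing $\pi$ by $\pi^+$ does not change the represented index, which is exactly what the $\beta^\pm$ argument does.

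A second, smaller gap concerns uniqueness. The uniqueness in Proposition~\ref{thm:01} is for a fixed level set $\mathcal{A}^m$, i.e., subject to the equivalence $X\in\mathcal{A}^m\Leftrightarrow\langle X^\ast,X\rangle\geq\pi(X^\ast,m)$ for all $X^\ast$. But a competing $\widetilde\pi\in\mathcal{P}^{\max}$ that also represents $\alpha$ may define level sets $\{X:\langle X^\ast,X\rangle\geq\widetilde\pi(X^\ast,m)\;\forall X^\ast\}$ that differ from $\mathcal{A}^m_\alpha$ at jump levels, so the slice-wise uniqueness does not directly force $\widetilde\pi=\pi^+$. The paper handles this with a separate argument comparing $\pi^1,\pi^2\in\mathcal{P}^{\max}$ through the unions $\bigcup_{n\geq m,\,n>m\text{ on }A}\mathcal{A}^{n,i}$ and then invoking right-continuity; this step should be made explicit rather than attributed to the two auxiliary propositions.

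Your treatment of part (ii) is fine and matches the paper.
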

\begin{proof}
	\begin{enumerate}[label=\textnormal{\textbf{(\roman*})},fullwidth]
		\item According to Theorem~\ref{th:AccIandARF},
			\begin{equation}
				\alpha\left( X \right)=\esssup \Set{m \in \bar{L}^0 \, : \, X \in \mathcal{A}^{m}}, \quad X \in \mathcal{X},
				\label{eq:robrep01}
			\end{equation}
			where $\mathcal{A}=(\cA^m)_{m\in\bar{L}^0}$ is the corresponding conditional risk acceptance family in the sense of \eqref{eq:AccIndandARF01}.
			In particular, each of the sets $\cA^m, \ m\in\bar{L}^0$, is monotone, convex, and, in view of Remark~\ref{rem:ss}, it  is also $\sigma$-stable.
			In addition, since $\alpha$ is upper-semicontinuous then each set $\cA^m, \ m\in\bar{L}^0$ is closed.
			Thus, defining  $\pi:\mathcal{K}^\circ\times \bar{L}^0 \to \bar{L}^0$ by
			\begin{equation}\label{eq:defPiMax}
				\pi\left( X^\ast,m \right):=\essinf_{X \in \mathcal{A}^m}\langle X^\ast,X\rangle,
			\end{equation}
			we have by Proposition~\ref{thm:01} that $\pi$ satisfies properties \ref{cond:pen02}-\ref{cond:pen04} of Definition~\ref{def:ConMaxPen}.
			Moreover, from \eqref{robust1}, we conclude that
			\begin{equation} \label{eq:robrep01-1}
				X\in\cA^m \quad \Longleftrightarrow \quad  \langle X,X^\ast\rangle \geq \pi\left( X^\ast,m \right)\text{ for all }X^\ast \in \mathcal{K}^\circ,
			\end{equation}
			which in combination with \eqref{eq:robrep01} yields
			\begin{equation}
				\alpha\left( X \right)=\esssup \Set{m \in \bar{L}^0 \, : \, \langle X,X^\ast\rangle \geq \pi\left( X^\ast,m \right),\text{ for all }X^\ast \in \mathcal{K}^\circ}.
				\label{eq:robrep02}
			\end{equation}

Furthermore, $\pi$ fulfills \ref{cond:pen00} of Definition~\ref{def:ConMaxPen}.
			Indeed, let $X^\ast,Y^\ast \in \mathcal{K}^\circ$, $m,\widetilde{m}\in \bar{L}^0$ and $A \in \mathscr{G}$.
			Since, $\mathcal{A}$ is jointly $\sigma$-stable, it follows that $\mathcal{A}^{1_{A}m+1_{A^c}\widetilde{m}}=1_{A}\mathcal{A}^m+1_{A^c}\mathcal{A}^{\widetilde{m}}$.
			Hence
			\begin{align*}
				\pi\left( 1_{A}X^\ast+1_{A^c}Y^\ast,1_{A}m+1_{A^c}\widetilde{m} \right) &=\essinf_{\widetilde{X}\in \mathcal{A}^{1_{A}m+1_{A^c}\widetilde{m}}}\langle 1_{A}X^\ast+1_{A^c}Y^\ast,\widetilde{X}\rangle\\
				&=\essinf_{X\in \mathcal{A}^m, Y\in \mathcal{A}^{\widetilde{m}}}\langle 1_{A}X^\ast+1_{A^c}Y^\ast,1_{A}X+1_{A^c}Y\rangle\\
                &= 1_{A}\essinf_{X\in \mathcal{A}^m}\langle X^\ast,X\rangle +1_{A^c}\essinf_{Y \in \mathcal{A}^{\widetilde{m}}}\langle Y^\ast,Y\rangle\\
				&=1_{A}\pi\left( X^{\ast},m \right)+1_{A^c}\pi\left( Y^\ast,\widetilde{m} \right),
			\end{align*}
hence $\pi$ is jointly local.

			Since the map  $m \mapsto \pi\left( \cdot,m \right)$ is increasing\footnote{Due to the fact that $\cA$ is decreasing.},
			the left- and right-continuous version of it, say,  $\pi^-$ and  $\pi^+$ respectively, are given as in \eqref{eq:app01} and \eqref{eq:app02}.
			Moreover, it is rather clear that $\pi^+$ fulfills\footnote{In particular, notice that an essential infimum of
			a family of upper semicontinuous functions is an upper semicontinuous, and in view of  \eqref{eq:app02} $\pi^+$ is upper semicontinuous.} the conditions \ref{cond:pen00}-\ref{cond:pen04} of  Definition~\ref{def:ConMaxPen}, and thus $\pi^+ \in \mathcal{P}^{\max}$.

			Next we show that
			\begin{equation}\label{eq:Beta3}
				\alpha(X) = \beta^-\left( X \right)=\beta^+(X),  \quad X\in\cX,
			\end{equation}
			where 	
			\begin{align}
				\beta^-\left( X \right) & :=\esssup \Set{m \in \bar{L}^0 \, : \, \langle X,X^\ast\rangle
				\geq \pi^-\left( X^\ast,m \right)\text{ for all }X^\ast \in \mathcal{K}^\circ}, \\
				\beta^+\left( X \right)& :=  \esssup \Set{m \in \bar{L}^0 \, : \, \langle X,X^\ast\rangle \geq \pi^+\left( X^\ast,m \right)\text{ for all }X^\ast \in \mathcal{K}^\circ}.
				\label{eq:beta4}
			\end{align}
			Since $\pi^-\left( X^\ast,m \right)\leq \pi\left( X^\ast,m \right)\leq \pi^+\left( X^\ast,m \right)$ for all $X^\ast ,m \in \mathcal{K}^\circ\times \bar{L}^0$,
			it follows that
			\begin{align}\label{eq:bloedsinn01}
				\beta^-\left( X \right) \geq \alpha(X) \geq \beta^+\left( X \right), \quad X\in\cX.
			\end{align}
			If $\beta^-\left( X \right)$ is equal to $-\infty$ on some set $A \in \mathscr{G}_{+}$, then equality \eqref{eq:Beta3} holds true on $A$.
			Hence, using locality, it is enough to prove that \eqref{eq:Beta3}  holds true for  $\beta^-\left( X \right)>-\infty$.
			By the definition of $\beta^-$,  there exists an increasing sequence $(m^n)\subseteq L^0$ converging to $\beta^{-}\left( X \right)$, and such that $m^n<m^{n+1}<\beta^{-}\left( X \right)$.
			By the definition of the left-and right-continuous version of an increasing function, we get $\pi^+\left( X^\ast,m^n \right)\leq \pi^-\left( X^\ast,m^{n+1}\right)$, for all $X^\ast \in \mathcal{K}^\circ$, and all $n \in \mathbb{N}$.
			Hence, $m^n\leq \beta^+\left( X \right)$ for all $n\in\bN$, and therefore $\beta^+\left( X \right)\geq \beta^-\left( X \right)$. This, combined with \eqref{eq:bloedsinn01}, implies  \eqref{eq:Beta3}.

			Denote by $R$ the right-inverse of $\pi^+$.
			By Proposition~\ref{prop:leftrightinverse}, see Remark \ref{rem:inverseleftright}, we have that $R=(\pi^-)^{(-1,r)}$. Thus, by \eqref{eq:Beta3} and \eqref{C3} we conclude that
			\begin{equation*}
				\alpha\left( X \right)  =\esssup \Set{m \in \bar{L}^0:  R\left(X^\ast,\langle X,X^\ast\rangle\right) \geq m \text{ for all }X^\ast \in \mathcal{K}^\circ},
			\end{equation*}
			and, consequently,
			\begin{equation*}
				\alpha\left( X \right)		=\esssup\Set{m \in \bar{L}^0: \essinf_{X^\ast \in \mathcal{K}^\circ}R\left(X^\ast,\langle X,X^\ast\rangle\right) \geq m }=\essinf_{X^\ast \in \mathcal{K}^\circ}R\left( X^\ast,\langle X^\ast,X\rangle \right).
			\end{equation*}
			
			Finally, we show the uniqueness of $R\in\cR^{\min}$. Using Proposition~\ref{prop:inverse} and \eqref{eq:Beta3}, it is sufficient to show that
			\begin{equation}\label{eq:beta5}
				\alpha(X) = \esssup \Set{m \in \bar{L}^0 \, : \, \langle X,X^\ast\rangle \geq \widetilde{\pi}\left( X^\ast,m \right)\text{ for all }X^\ast \in \mathcal{K}^\circ}.
			\end{equation}
			holds true for a unique $\widetilde{\pi}\in\cP^{\max}$.
			We assume, that 	\eqref{eq:beta5} is satisfied for $\pi^i\in\mathcal{P}^{\max}, \ i=1,2$.
			For every $n\in\bar{L}^0$ and $i=1,2$, we consider the sets
			\begin{align}
				\mathcal{A}^{n,i} & :=\set{X \in \mathcal{X} \, : \, \langle X^\ast,X\rangle \geq \pi^i\left( X^\ast,n \right)\text{ for all }X^\ast \in \mathcal{K}^\circ}\nonumber \\
				& \ = \bigcap_{X^\ast \in \mathcal{K}^\circ} \set{X \in \mathcal{X}: \langle X^\ast,X \rangle \geq \pi^i(X^\ast,n)}. \label{eq:beta6}
			\end{align}
			For every $X^\ast \in \mathcal{K}^\circ, \ m\in\bar{L}^0$, the set $\set{X \in \mathcal{X}: \langle X^\ast,X \rangle \geq  m}$ is clearly closed, convex, and $\sigma$-stable and monotone.
			By \eqref{eq:beta6}, we conclude  that $\mathcal{A}^{n,i}$ are closed, convex, monotone and $\sigma$-stable, for every $n\in\bar{L}^0$ and $i=1,2$.
			Let $A=\set{m=\infty}$.
			By Proposition \ref{thm:01}, we have that, for $i=1,2$,
			\begin{equation} \label{eq:beta5-1}
				\pi^i\left( X^\ast,m \right)=\essinf_{\substack{n\geq m\\n>m\text{ on }A}}\pi^i\left( X^\ast,n \right)=\essinf_{\substack{n\geq m\\n>m\text{ on
						}A}}\essinf_{X \in \mathcal{A}^{n,i}} \langle X^\ast,X\rangle=\essinf_{X \in \bigcup\limits_{\substack{n\geq m\\n>m\text{ on }A}} \cA^{n,i}}\langle X^\ast ,X\rangle.
			\end{equation}
			on $A^c$.

			Next we will show that
			\begin{equation}\label{eq:beta7}
				\bigcup_{\substack{n\geq m\\n>m\text{ on }A}}\cA^{n,i} = \Set{X \in \mathcal{X}:\alpha(X)\geq m\text{ and }\alpha\left( X \right)>m\text{ on }A}, \quad i=1,2.
			\end{equation}
			If $X$ belongs the left hand side, then $X\in A^{n_0,i}$ for some $n_0\geq m$ with $n_0>m$ on $A$, and hence, by \eqref{eq:Beta3} together with \eqref{eq:beta4}, we get that $\alpha(X)\geq n_0$, and consequently we conclude that $X$ belongs to the right hand side.
		  	Conversely, if $\alpha\left( X \right)\geq m$ with $\alpha(X)>m$ on $A$, then by \eqref{eq:Beta3} together with \eqref{eq:beta4}, there exists $n_0 \geq m$ with $n_0>m$ on $A$ such that $\langle X^\ast,X\rangle \geq \pi^i\left( X^\ast,n_0 \right)$ for all $X^\ast \in \mathcal{K}^\circ$.
			Hence, $X\in A^{n_0,i}$ and therefore $X$ is in the left hand side of \eqref{eq:beta7}.
			Finally, \eqref{eq:beta5-1} combined with \eqref{eq:beta7} imply that $\pi^1=\pi^2$ on $A^c$.
			Since $\pi^i$ are right-continuous, $\pi^1=\pi^2=\infty$ on $A$, and thus $\pi^1=\pi^2$.

		\item If the function $R$ is a conditional risk function, i.e. it satisfies \ref{cond:rm00} and \ref{cond:rm01} from Definition~\ref{def:minRiskMeasure}, it follows immediately that for every $X^\ast\in\cK^\circ$, the function $R\left(X^\ast,\langle X^\ast,\cdot\rangle  \right)$ is local, quasiconcave, monotone, and upper-semicontinuous.  All these properties are preserved under $\essinf$, and this concludes the proof.

	\end{enumerate}

\end{proof}

\begin{remark}\label{rem:normalizedset}
	Similarly to  \citep{DrapeauKupper2010}, if there exists $\kappa \in \mathcal{K}$ such that $\langle X^\ast,\kappa \rangle>0$ for any $X^\ast \in \mathcal{K}^\circ$, the robust representation \eqref{eq:robrep} can be achieved on the normalized set
	\begin{equation}
		\mathcal{K}^\circ_{\kappa}:=\Set{X^\ast \in \mathcal{K}^\circ:\langle X^\ast, \pi\rangle=1},
		\label{}
	\end{equation}
	for a unique minimal risk function $R:\mathcal{K}^\circ_{\kappa}\times L^0\to \bar{L}^0$.
	In this case  the condition \ref{cond:rm02} from Definition~\ref{def:minRiskMeasure} is replaced by
	\begin{enumerate}[label=(\textit{iii})${}^\prime$]
		\item \label{cond:rm02bis} it is jointly quasiconvex.
	\end{enumerate}
\end{remark}

\noindent
Additional properties of $\alpha$ are shared by the corresponding dual minimal risk function, as stated in the following result.

\begin{proposition}
	An upper semicontinuous assessment index $\alpha$ is concave, positive homogeneous, scale invariant, or $\kappa$-cash additive if and only if the corresponding minimal risk function $R$ is convex, positive homogeneous, scale invariant or $\kappa$-cash additive in the second argument, respectively.
\end{proposition}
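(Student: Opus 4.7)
My plan is to prove each of the four equivalences by combining the robust representation $\alpha(X)=\essinf_{X^\ast\in\mathcal{K}^\circ}R(X^\ast,\langle X^\ast,X\rangle)$ of Theorem~\ref{thm:robrep} with the one-to-one correspondence $\pi\leftrightarrow R$ supplied by right-inversion in the second argument (Proposition~\ref{prop:inverse}). Recall that this $\pi$ is the maximal penalty function $\pi(X^\ast,m)=\essinf_{X\in\mathcal{A}^m}\langle X^\ast,X\rangle$ attached to the conditional risk acceptance family $\mathcal{A}^m_\alpha=\{X\in\mathcal{X}:\alpha(X)\geq m\}$ of Theorem~\ref{th:AccIandARF}.

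For the implication \emph{$R$ has the property $\Rightarrow\alpha$ has the property} I would verify each case by a short direct substitution into the robust representation. For instance, if $R(X^\ast,\lambda s)=\lambda R(X^\ast,s)$ for every $\lambda\in L^0_{++}$, then
\begin{equation*}
\alpha(\lambda X)=\essinf_{X^\ast}R(X^\ast,\lambda\langle X^\ast,X\rangle)=\lambda\essinf_{X^\ast}R(X^\ast,\langle X^\ast,X\rangle)=\lambda\alpha(X);
\end{equation*}
scale invariance proceeds in exactly the same way. For concavity, each map $X\mapsto R(X^\ast,\langle X^\ast,X\rangle)$ inherits its concavity from the corresponding shape of $R(X^\ast,\cdot)$ in the second argument via composition with the linear map $X\mapsto\langle X^\ast,X\rangle$, and concavity is preserved under $\essinf$. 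For $\kappa$-cash additivity, I would first restrict to the normalised slice $\mathcal{K}^\circ_\kappa=\{X^\ast:\langle X^\ast,\kappa\rangle=1\}$ of Remark~\ref{rem:normalizedset}; the translation $X\mapsto X+m\kappa$ then enters $R$ as the additive shift $s\mapsto s+m$, and the $+m$ can be pulled out of the $\essinf$.

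For the converse I would trace the chain $\alpha\leadsto\mathcal{A}_\alpha\leadsto\pi\leadsto R$. At the first link each property of $\alpha$ translates into a structural property of the acceptance family: positive homogeneity yields $\mathcal{A}^{\lambda m}=\lambda\mathcal{A}^m$, scale invariance makes every $\mathcal{A}^m$ a cone, $\kappa$-cash additivity gives $\mathcal{A}^m=m\kappa+\mathcal{A}^0$, and concavity of $\alpha$ produces the Minkowski inclusion $\lambda\mathcal{A}^a+(1-\lambda)\mathcal{A}^b\subseteq\mathcal{A}^{\lambda a+(1-\lambda)b}$. Inserting these into the formula defining $\pi$ transfers the property directly to $\pi(X^\ast,\cdot)$ in its second argument. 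Finally Proposition~\ref{prop:inverse} transports the property from $\pi$ to $R=(\pi^+)^{(-1,r)}$: right-inversion of an increasing function commutes with positive homogeneity, scale invariance and additive shifts in the second argument, while it interchanges convexity and concavity, yielding the claimed property of $R$ in $s$.

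The hardest part will be this last link, where one must confirm that each algebraic property genuinely lifts from the raw $\pi$ to its right-continuous hull $\pi^+$ and then to $R$, with joint $\sigma$-stability and $L^0$-locality preserved throughout. In particular, the identification $\alpha=\beta^-=\beta^+$ established inside the proof of Theorem~\ref{thm:robrep} is needed to ensure that the property survives the regularisation step rather than being lost upon passing to $\pi^+$, and one must appeal to the inversion lemmas in the Appendix to control the behaviour of $(\cdot)^{(-1,r)}$ on conditional increasing functions. Once this technical bookkeeping is carried out, the four equivalences follow from the chain of implications described above.
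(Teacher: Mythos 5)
The paper omits the proof of this proposition, referring only to the static case, so there is no argument in the text to compare against. Your chain $\alpha\leftrightarrow\cA\leftrightarrow\pi\leftrightarrow R$ via right-inversion in the second argument is the natural one, and your treatment of positive homogeneity, scale invariance, and $\kappa$-cash additivity (with restriction to the normalised slice $\cK^\circ_\kappa$ for the latter) is sound in structure, leaving only the $\sigma$-stability/locality bookkeeping that you acknowledge.

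The concavity case, however, contains an unresolved sign problem. From $\alpha$ concave one does get the Minkowski inclusion $\lambda\cA^a+(1-\lambda)\cA^b\subseteq\cA^{\lambda a+(1-\lambda)b}$, but inserting it into $\pi(X^\ast,m)=\essinf_{X\in\cA^m}\langle X^\ast,X\rangle$, the essential infimum over the larger set on the right-hand side is the smaller one, so $\pi(X^\ast,\lambda a+(1-\lambda)b)\leq\lambda\pi(X^\ast,a)+(1-\lambda)\pi(X^\ast,b)$, i.e.\ $\pi$ is \emph{convex} in $m$. The right-inverse of a convex increasing function is concave increasing, so $R=(\pi^+)^{(-1,r)}$ comes out \emph{concave} in its second argument, not convex. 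Your phrase ``it interchanges convexity and concavity, yielding the claimed property'' therefore does not deliver the stated property; carried through, it delivers concavity of $R$, contradicting the proposition as written. The forward direction has the same problem: ``each map $X\mapsto R(X^\ast,\langle X^\ast,X\rangle)$ inherits its concavity\ldots\ and concavity is preserved under $\essinf$'' requires $R(X^\ast,\cdot)$ to be concave; if $R$ were convex in the second argument, each composite with the linear map $X\mapsto\langle X^\ast,X\rangle$ would be convex, and an essential infimum of convex functions is in general neither convex nor concave. Both of your directions, tracked carefully, establish ``$\alpha$ concave $\Leftrightarrow R$ concave in the second argument.'' Either the proposition carries a typo inherited from a sign convention in the static reference (where the numerical representation is a quasi\emph{convex} risk function, hence ``convex'' appears on both sides), in which case your corrected argument goes through, or something subtler is being missed; in either case you should have noticed the discrepancy instead of asserting that the chain ``yields the claimed property'' without verifying the sign.
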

The proof is similar to that from the static case (cf. \citep{Drapeau2010,DrapeauKupper2010}), and we omit it here.

\subsection{Scale Invariant Conditional Assessment Indices}\label{sec:scaleInv}
In this section we specify how the robust representation looks like in the specific case of scaling invariance.
Note that the acceptance sets $\mathcal{A}^m,\ m\in L^0,$ corresponding to a scale invariant assessment index are closed and convex cones.
We denote their polar sets as
\begin{equation}
	\mathcal{A}^{m,\circ}:=\Set{X^\ast \in \mathcal{X}^\ast:\langle X^\ast,X\rangle \geq 0\text{ for all }X\in \mathcal{A}^m},\quad m \in \bar{L}^0.
	\label{eq:ampolar}
\end{equation}

\begin{proposition}\label{prop:ScaleInvGeneral}
	Let $\alpha: \mathcal{X}\to \bar{L}^0$ be an upper semicontinuous scale invariant conditional assessment index.
	Then, the unique conditional minimal risk function $R \in \mathcal{R}_{\max}$ from the representation \eqref{eq:robrep}  has the form
	\begin{equation}\label{eq:scaleR}
		R\left( X^\ast,s \right)=
		\begin{cases}
			-\infty&\text{on }\set{s=-\infty}\\
			\essinf\Set{m \in \bar{L}^0:X^\ast \in \mathcal{A}^{m,\circ}}&\text{on }\set{-\infty<s<0}\\
			+\infty&\text{on }\set{s\geq 0}
		\end{cases}
		,\quad X^\ast \in \mathcal{K}^\circ\text{ and }s \in \bar{L}^0.
	\end{equation}
\end{proposition}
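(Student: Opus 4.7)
The strategy is to read off $R$ from the dual formula $R = (\pi^+)^{(-1,r)}$ established during the proof of Theorem~\ref{thm:robrep}, after exploiting scale invariance to compute $\pi$ explicitly.

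\emph{Step 1 (cone structure of $\mathcal{A}^m$ and evaluation of $\pi$).} First I would observe that scale invariance of $\alpha$, together with monotonicity of the family $\mathcal{A}_\alpha$ (which gives $0 \in \mathcal{A}^m$ by taking $m \to -\infty$ via left-continuity, up to the usual conventions), forces each acceptance set $\mathcal{A}^m$ to be a closed convex cone containing $0$. Consequently the maximal penalty function
\begin{equation*}
\pi(X^\ast,m) \;=\; \essinf_{X \in \mathcal{A}^m}\langle X^\ast, X\rangle
\end{equation*}
satisfies $\pi(X^\ast,m) \leq 0$ (take $X=0$). Moreover, if $\langle X^\ast, X\rangle < 0$ on some $A \in \mathscr{G}_+$ for some $X \in \mathcal{A}^m$, then $\sigma$-stability (giving $1_A X \in \mathcal{A}^m$) and conic homogeneity (giving $\lambda 1_A X \in \mathcal{A}^m$ for any $\lambda \in L^0_{++}$) force $\pi(X^\ast,m) = -\infty$ on $A$. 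Hence $\pi(X^\ast,m)$ takes values in $\{0,-\infty\}$ locally: it equals $0$ precisely on the (largest) event where $X^\ast \in \mathcal{A}^{m,\circ}$ in the conditional sense, and $-\infty$ on the complement.

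\emph{Step 2 (computing the right-inverse).} Since the threshold at which $\pi(X^\ast,\cdot)$ jumps from $-\infty$ to $0$ is attained from above (because $\mathcal{A}^{m,\circ}$ is increasing in $m$), we have $\pi^+ = \pi$. Applying $R = (\pi^+)^{(-1,r)}$ pointwise in $s$ then splits into three cases. On $\{s \geq 0\}$ the inequality $\pi(X^\ast,m) \leq s$ holds for every $m$, so the esssup/essinf defining the right-inverse is $+\infty$. On $\{-\infty < s < 0\}$ the inequality $\pi(X^\ast,m) \leq s$ is equivalent to $\pi(X^\ast,m) = -\infty$, i.e.\ $X^\ast \notin \mathcal{A}^{m,\circ}$; using that $\mathcal{A}^{m,\circ}$ is increasing in $m$, the right-inverse equals the stated $\essinf\{m \in \bar L^0 : X^\ast \in \mathcal{A}^{m,\circ}\}$. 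On $\{s = -\infty\}$ the right-inverse convention yields $-\infty$.

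\emph{Step 3 (uniqueness).} Uniqueness of $R$ in $\mathcal{R}^{\min}$ is inherited from Theorem~\ref{thm:robrep}, since the formula above manifestly lies in $\mathcal{R}^{\min}$ (it is jointly local, increasing and right-continuous in $s$, and the scale invariance in the second argument is consistent with the uniform asymptotic maximum condition via Proposition~\ref{prop:inverse}).

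\emph{Main obstacle.} The delicate point is the locality: a given $X^\ast$ may lie in $\mathcal{A}^{m,\circ}$ only on part of $\Omega$, so ``$\pi(X^\ast,m) \in \{0,-\infty\}$'' must be interpreted pointwise along a $\mathscr{G}$-measurable partition. Handling this requires careful use of $\sigma$-stability of $\mathcal{A}^m$ (to replace $X$ by $1_A X$) and of the joint locality of $\pi$ established in the proof of Theorem~\ref{thm:robrep}. Once the partitioning is in place, the three-case computation of the right-inverse is routine.
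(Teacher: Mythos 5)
Your proposal follows essentially the same route as the paper: both use the cone structure of $\mathcal{A}^m$ to show that $\pi(X^\ast,m)=\chi^\star_{\mathcal{A}^m}(X^\ast)$ takes only the values $0$ and $-\infty$ (the paper packages this as the conditional bipolar identity $\chi^\star_{\mathcal{A}^m}=\chi_{\mathcal{A}^{m,\circ}}$, you argue it directly), and then compute $R$ as the conditional right-inverse. Two small slips that do not affect the argument: the fact $0\in\mathcal{A}^m$ comes from $\mathcal{A}^m$ being a nonempty closed cone (let $\lambda\downarrow 0$), not from monotonicity/left-continuity — the paper avoids needing $0\in\mathcal{A}^m$ altogether by scaling $X$ by $\lambda\downarrow 0$ inside the $\essinf$; and your claim that $\pi^+=\pi$ is false in general (the threshold $\essinf\{m:X^\ast\in\mathcal{A}^{m,\circ}\}$ need not be attained), though unneeded since by \eqref{C3} the conditional right-inverse depends only on the left-continuous version and $(\pi^+)^-=\pi^-$, so $(\pi^+)^{(-1,r)}=\pi^{(-1,r)}$.
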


\begin{proof}
Similar to Theorem~\ref{thm:robrep}.(i), we consider the function  
\begin{equation}\label{eq:scaleInv2}
		\pi\left( X^\ast,m \right):=\essinf_{X \in \mathcal{A}^m}\langle X^\ast,X\rangle=\chi^\star_{\mathcal{A}^m}\left( X^\ast \right),
	\end{equation}
 where the last equality follows as in \eqref{eq:myblabla02}.
	Since $\mathcal{A}^{m}$ is a cone, it follows that
	\begin{equation}\label{eq:pimaxpolar}
		\chi^{\star}_{\mathcal{A}^m}=\chi_{\mathcal{A}^{m,\circ}},
	\end{equation}
	for any $m \in \bar{L}^0$.\footnote{Note that this states the conditional version of the Bipolar Theorem.}
	Indeed, by definition, $X^\ast \in \mathcal{A}^{m,\circ}$ if and only if $\langle X^\ast,X\rangle\geq 0$ for every $X \in \mathcal{A}^m$.
	Using the fact that $\mathcal{A}^m$ is a cone, we scale $X$ with $\lambda \in L^0_{++}$ converging to $0$ in the essential infimum \eqref{eq:scaleInv2} 
	It follows that $X^\ast \in \mathcal{A}^{m,\circ}$ if and only if $\chi^{\star}_{\mathcal{A}^m}\left( X^\ast \right)=0$.
	On the other hand, if $1_{B}X^\ast \not \in 1_{B}\mathcal{A}^{m,\circ}$ for every $B \in \mathscr{G}_{+}$, it follows by definition of $\mathcal{A}^{m,\circ}$ that there exists $X \in \mathcal{A}^{m,\circ}$ such that $\langle X^\ast,X\rangle<0$.
	Scaling with $\lambda \in L^0_{++}$ tending to $\infty$, it follows that $1_{B}X^\ast \not \in 1_{B}\mathcal{A}^{m,\circ}$ for every $B \in \mathscr{G}_{+}$ if and only if $\pi\left( X^\ast,m \right)=-\infty$.
	By locality, and definition of $\chi_{\mathcal{A}^{m,\circ}}$, we therefore deduce that equation \eqref{eq:pimaxpolar} holds.

	Finally, we need to show that $R$ given by $\eqref{eq:scaleR}$ is the conditional right-inverse of $\pi$ in the second argument.
It holds that  $\chi_{\mathcal{A}^{m,\circ}}$ takes only $0$ and $\infty$ as values.
	For $X^\ast=0$, it clearly holds $R(0,s)=\infty$ on $\set{s\geq 0}$ and $-\infty$ on $\set{s<0}$ which corresponds to Relation \eqref{eq:scaleR}.
	On the other hand, if $1_A X^\ast\neq 0 $ for every $A \in \mathcal{G}_{+}$, it follows that $\essinf_{m \in \bar{L}^0} \chi_{\mathcal{A}^{m,\circ}}(X^\ast)=\chi_{\mathcal{X}^\circ}(X^\ast)=\chi_{\set{0}}(X^\ast)=-\infty$.
	Hence applying the definition of the right inverse, it follows that
	\begin{align*}
		R\left( X^\ast,s \right)&=-1_{\set{s=-\infty}}\infty+1_{\set{s\geq 0}}\infty+1_{\set{-\infty<s<0}}\essinf\Set{m \in \bar{L}^0:\chi_{\mathcal{A}^{m,\circ}}(X^\ast)>s}\\
		&=-1_{\set{s=-\infty}}\infty+1_{\set{s\geq 0}}\infty+1_{\set{-\infty<s<0}}\essinf\Set{m \in \bar{L}^0:X^\ast\in \mathcal{A}^{m,\circ}}.
	\end{align*}
	Using stability for the general $X^\ast \in \mathcal{K}^\circ$ yields the representation \ref{eq:scaleR}.
\end{proof}

\subsection{Certainty Equivalent}\label{sec024}
In \citet{CheriditoKupper2009}, a concept of certainty equivalent was studied in the context of risk measures.
Here, we carry out an analogous study with regard to conditional assessment  indices. In Section~\ref{sec04}, we will make a crucial use of the concept of certainty equivalent in studying the (strong) time consistency of assessment indices for processes.
Throughout this section we fix $\kappa\in\mathcal{K}\setminus \set{0}$.

\begin{definition}\label{def:certEquivalent}
	A \emph{$\kappa$-conditional certainty equivalent} of a conditional assessment index $\alpha$ is a local functional $C:\mathcal{X}\to L^0$ such that
	\begin{equation}
		\alpha(C(X)\kappa)=\alpha(X), \quad X \in \mathcal{X}.
		\label{eq:certEquivalent0}
	\end{equation}
\end{definition}
\noindent A natural candidate for the conditional certainty equivalent of a conditional assessment index $\alpha$ is given by
\begin{equation} \label{eq:certEquivalent}
	C(X):= \essinf \Set{ m \in L^0:\alpha(m\kappa) \geq \alpha(X)},\quad X \in \mathcal{X}.
\end{equation}

\begin{remark}\label{rem:CerEqNotGeneral}
However, in general, definition \eqref{eq:certEquivalent}, even though natural, may not produce a valid certainty equivalent.
In particular, if $\alpha$ is a scale invariant assessment index, then $C(X)$ defined as in \eqref{eq:certEquivalent}, will take values only $0$ and $-\infty$, and \eqref{eq:certEquivalent0} will not be satisfied, in general. Indeed, for simplicity assume that $\cK=L^0_+$ and $\kappa=1$, and let $C$ be defined as in \eqref{eq:certEquivalent}.
For sufficiently large $m>0$, we have that $m\geq X$, and by monotonicity of $\alpha$, we deduce that $\alpha(m)\geq \alpha(X)$. Hence, using scale invariance of $\alpha$, we conclude that $C(X)\leq 0$, and consequently
$$
C(X) = \essinf \Set{ m \in L^0: m\leq 0, \textrm{ and } \alpha(m) \geq \alpha(X)},\quad X \in \mathcal{X}.
$$
Using scale invariance of $\alpha$ again, we conclude that $C(X)$ will take values only $0$ and $-\infty$.

With  \eqref{eq:certEquivalent} in mind, we thus need to find sufficient conditions on index $\alpha$ ensuring that \eqref{eq:certEquivalent} indeed defines a certainty equivalent.
\end{remark}

\begin{definition}
	A conditional assessment index $\alpha$ is
	\begin{itemize}
		\item $\kappa$\emph{-bounded}, if for any $X \in \mathcal{X}$, there exist $m_1,m_2 \in L^0$ satisfying
			\begin{equation}\label{l0bounded}
				\alpha(m_1\kappa) < \alpha(X) \leq \alpha(m_2\kappa).		
			\end{equation}
		\item $\kappa$\emph{-strictly increasing}, if $\alpha(m\kappa) > \alpha(m'\kappa)$ on $A$, whenever $m,m'\in L^0$ and $m>m'$ on $A$.
		\item $\kappa$\emph{-sensitive}, if for $m\in L^0$ and $Y\in \mathcal{X}$ with $\alpha(m\kappa) > \alpha(Y)$ on some $A \in \mathscr{G}$, there exists an $\varepsilon\in L^0_{+}$ with $\varepsilon>0$ on $A$,   such that
			\begin{equation*}
				\alpha((m-\varepsilon)\kappa )  \geq \alpha(Y),\quad\text{ on }A.
			\end{equation*}
	\end{itemize}
\end{definition}

\begin{proposition}\label{prop:relation:alpha:C:bis}
	Let $\alpha:\mathcal{X}\to \bar{L}^0$ be a $\kappa$-sensitive and $\kappa$-bounded upper semicontinuous conditional assessment index.
	Then, $C$ defined as in \eqref{eq:certEquivalent} is a $\kappa$-conditional certainty equivalent and
	\begin{equation}\label{eq:relation:alpha:C1}
		\alpha\left( X \right)\geq \alpha\left( Y \right)\quad\Longleftrightarrow\quad C\left( X \right)\geq C\left( Y \right), \quad X,Y\in \cX.		
	\end{equation}
	In this case, $C$ is itself a $\kappa$-sensitive and $\kappa$-bounded conditional assessment index.

	If in addition $\alpha$ is $\kappa$-strictly increasing, then \eqref{eq:certEquivalent} is upper semicontinuous, and the unique $\kappa$-conditional certainty equivalent of $\alpha$.
\end{proposition}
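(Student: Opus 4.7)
The plan is to work entirely through the superlevel set $S_X := \{m \in L^0 : \alpha(m\kappa) \geq \alpha(X)\}$, for which $C(X) = \essinf S_X$, and to proceed in four stages: show $C$ takes values in $L^0$ and is local; establish the certainty equivalent identity $\alpha(C(X)\kappa) = \alpha(X)$; derive \eqref{eq:relation:alpha:C1} and transfer the remaining properties from $\alpha$ to $C$; finally, address upper semicontinuity and uniqueness under $\kappa$-strict monotonicity.

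Since $\kappa \in \mathcal{K}$, the map $m \mapsto m\kappa$ is $\succcurlyeq$-increasing and the function $\varphi(m) := \alpha(m\kappa)$ is $L^0$-increasing and local, hence $S_X$ is $\sigma$-stable and downward directed (take $m\wedge m' = 1_{\{m\leq m'\}}m + 1_{\{m>m'\}}m'$ and apply locality of $\alpha$). Using $\kappa$-boundedness, pick $m_1, m_2 \in L^0$ with $\alpha(m_1\kappa) < \alpha(X) \leq \alpha(m_2\kappa)$: clearly $m_2 \in S_X$, and if $m \in S_X$ had $A := \{m\leq m_1\}$ of positive probability, then by locality and monotonicity $1_A\alpha(X) \leq 1_A\alpha(m\kappa) = 1_A\alpha((1_A m + 1_{A^c}m_1)\kappa) \leq 1_A\alpha(m_1\kappa)$, contradicting $\alpha(m_1\kappa) < \alpha(X)$. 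Thus $m_1 \leq C(X) \leq m_2$, so $C(X) \in L^0$. For locality of $C$, given $A \in \mathscr{G}$ and $X,Y \in \cX$, for any $m \in S_X$ and fixed $m_Y \in S_Y$ the element $1_A m + 1_{A^c} m_Y$ lies in $S_{1_A X + 1_{A^c} Y}$ by locality of $\alpha$; running the symmetric argument yields $1_A C(1_A X + 1_{A^c} Y) = 1_A C(X)$, and combined with the analogous identity on $A^c$ this gives $C(1_A X + 1_{A^c} Y) = 1_A C(X) + 1_{A^c} C(Y)$.

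For the certainty equivalent identity, upper semicontinuity of $\alpha$ and continuity of $m \mapsto m\kappa$ imply that $S_X$ is closed in $L^0$. To prove $\alpha(C(X)\kappa) \geq \alpha(X)$, I show that $C(X) \in S_X$: downward directedness furnishes $(m_n) \subseteq S_X$ with $m_n \downarrow C(X)$ a.s., and for any $\varepsilon \in L^0_{++}$ the events $A_n := \{m_n \leq C(X) + \varepsilon\}$ increase to $\Omega$ up to a null set, so setting $A_0 := \emptyset$ and invoking $\sigma$-stability gives $\tilde m_\varepsilon := \sum_{n} 1_{A_n \setminus A_{n-1}}\, m_n \in S_X$ with $|\tilde m_\varepsilon - C(X)| \leq \varepsilon$. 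Hence $S_X$ meets every ball $B_\varepsilon(C(X))$, and closedness forces $C(X) \in S_X$. For the reverse $\alpha(C(X)\kappa) \leq \alpha(X)$, if $\alpha(C(X)\kappa) > \alpha(X)$ held on some $A \in \mathscr{G}_+$, $\kappa$-sensitivity would furnish $\varepsilon \in L^0_+$ with $\varepsilon > 0$ on $A$ and $\alpha((C(X)-\varepsilon)\kappa) \geq \alpha(X)$ on $A$; after replacing $\varepsilon$ by $1_A\varepsilon$, locality together with the already-proved $\geq$ direction on $A^c$ give $C(X) - \varepsilon \in S_X$ with $C(X) - \varepsilon < C(X)$ on $A$, contradicting $C(X) = \essinf S_X$.

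The equivalence \eqref{eq:relation:alpha:C1} is immediate: $\alpha(X) \geq \alpha(Y)$ implies $S_X \subseteq S_Y$ and hence $C(X) \geq C(Y)$, while the converse uses $\succcurlyeq$-monotonicity of $\alpha$ (as $C(X)\kappa \succcurlyeq C(Y)\kappa$ when $C(X) \geq C(Y)$) combined with the certainty equivalent identity. The same argument run conditionally on $A \in \mathscr{G}$ (via the gluing $1_A X + 1_{A^c} Y$ and locality of $\alpha$ and $C$) localizes the equivalence, from which monotonicity, quasiconcavity, $\kappa$-boundedness, and $\kappa$-sensitivity of $C$ all transfer directly from the corresponding properties of $\alpha$ (e.g.\ $\kappa$-boundedness because any equality $C(m_1\kappa) = C(X)$ on a positive set would via the certainty equivalent identity force $\alpha(m_1\kappa) = \alpha(X)$ there). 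Under the additional $\kappa$-strict monotonicity of $\alpha$, any $m' < m$ on some $A \in \mathscr{G}_+$ satisfies $\alpha(m'\kappa) < \alpha(m\kappa)$ on $A$, so $m' \notin S_{m\kappa}$ and $C(m\kappa) = m$; consequently $\{X : C(X) \geq m\} = \mathcal{A}^{\alpha(m\kappa)}$ is closed, yielding upper semicontinuity of $C$, and any other $\kappa$-conditional certainty equivalent $\tilde C$ satisfies $\alpha(\tilde C(X)\kappa) = \alpha(C(X)\kappa)$, forcing $\tilde C(X) = C(X)$ by $\kappa$-strict monotonicity on $L^0\kappa$. The main obstacle is the $\geq$ direction of the certainty equivalent identity, since a.s.\ monotone convergence $m_n \downarrow C(X)$ does not in general produce convergence in the $L^0$-module topology; the $\sigma$-stable gluing $\tilde m_\varepsilon$ is the essential device circumventing this difficulty.
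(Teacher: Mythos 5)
Your proof is correct and follows the same overall route as the paper, working throughout with the superlevel set $S_X=\{m\in L^0:\alpha(m\kappa)\geq\alpha(X)\}$, but it differs in two steps and both differences are worth noting. For the $\geq$ direction of the certainty-equivalent identity the paper writes $\alpha(C(X)\kappa)=\alpha(\lim_n m_n\kappa)\geq\esslimsup_n\alpha(m_n\kappa)\geq\alpha(X)$, in effect treating topological upper semicontinuity as though it passed to almost-sure decreasing limits; but $m_n\downarrow C(X)$ a.s.\ need not converge in the $L^0$-ball topology used to define closedness of the upper level sets, so this is not a literal application of the definition. Your $\sigma$-stable gluing $\tilde m_\varepsilon$---relying on $S_X$ being closed (via continuity of $m\mapsto m\kappa$ and upper semicontinuity of $\alpha$), $\sigma$-stable, and downward directed---produces a genuine net in $S_X$ converging to $C(X)$ in the module topology and is precisely the device that makes the paper's step rigorous; your closing remark correctly identifies this as the crux. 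Second, for the converse implication of \eqref{eq:relation:alpha:C1} the paper argues indirectly via $\kappa$-sensitivity (a strict inequality of $\alpha$ on a set forces a strict inequality of $C$ there), whereas you simply observe that $C(X)\geq C(Y)$ gives $(C(X)-C(Y))\kappa\in\mathcal{K}$, hence $C(X)\kappa\succcurlyeq C(Y)\kappa$, so monotonicity plus the certainty-equivalent identity yield $\alpha(X)=\alpha(C(X)\kappa)\geq\alpha(C(Y)\kappa)=\alpha(Y)$; your route is shorter and does not invoke $\kappa$-sensitivity for that direction. The remaining parts (locality of $C$ via gluing elements of $S_X$ and $S_Y$, transfer of monotonicity/quasiconcavity/boundedness/sensitivity through the localized equivalence, $C(m\kappa)=m$ under $\kappa$-strict monotonicity, uniqueness, and the identification $\{C\geq m\}=\{\alpha\geq\alpha(m\kappa)\}$ for upper semicontinuity) match the paper's argument.
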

\begin{remark}
	Relation \eqref{eq:relation:alpha:C1} shows that $C$ and $\alpha$ reproduce the same ranking, so they are equivalent in this sense.
	Note that the functional defined in \eqref{eq:certEquivalent} satisfies the following property
	\begin{equation}
		C\left( C\left( X \right)\kappa \right)=C\left( X \right),\quad X \in \mathcal{X},
	\end{equation}
which means that $C$ is a certainty equivalent of itself.
\end{remark}

\begin{proof}
Let $C$ be defined as in \eqref{eq:certEquivalent}. Consequently, \eqref{l0bounded} implies that $C$ takes values in $L^0$.
	Next we will show that $C$ satisfies \eqref{eq:certEquivalent0}.
	By locality of $\alpha$ the set $\mathcal{C}(X):=\set{m\in L^0  : \alpha(m\kappa)\geq \alpha(X)}$ is downward directed.
	Hence, there exits a decreasing sequence $(m_n)\subseteq\mathcal{C}(X)$ converging to $C(X)$ $P$-almost surely.
	Upper semicontinuity of $\alpha$ implies
	\begin{equation}\label{eq:certEquivalent2}
		\alpha\left(C\left(X\right)\kappa\right) = \alpha\left(\lim_n m_n\kappa\right)\geq \esslimsup_n \alpha\left( m_n \kappa \right)\geq \alpha(X).
	\end{equation}
	Suppose now that $\alpha\left( C\left( X \right)\kappa \right)>\alpha\left( X \right)$ on some $A \in \mathscr{G}_{+}$.
	By $\kappa$-sensitivity of $\alpha$ it follows that $\alpha((C(X)-\varepsilon)\kappa)\geq \alpha(X)$ on $A$, for some $\varepsilon>0$ on $A$.
	Take $\varepsilon=0$ on $A^c$, and by locality of $\alpha$ and \eqref{eq:certEquivalent2}, we get that $\alpha(C(X)\kappa -\varepsilon)\geq \alpha(X)$.
	Hence, $C(X)-\varepsilon\in\mathcal{C}(X)$, so that $C(X)-\varepsilon\geq C(X)$, which is a contradiction.
Next, let us prove that $C$ is local.
By the definition of $C$, and locality of $\alpha$, we have
\begin{align*}
	C(1_A X+1_{A^c}Y)& = \essinf\Set{m \in L^0: 1_A\alpha(m\kappa)+1_{A^c}\alpha(m\kappa)\geq 1_A\alpha(X)+1_{A^c}\alpha(Y)}\\
	& = \essinf\{1_Am_1+1_{A^c}m_2 \in L^0: 1_A\alpha\left(\left(1_{A}m_1+1_{A^c}n_1\right)\kappa\right)\geq 1_A\alpha(X), \\
	& \qquad \qquad\qquad 1_{A^c}\alpha\left((1_An_2+1_{A^c}m_2)\kappa\right)\geq 1_{A^c}\alpha(Y), \text{ where }n_1,n_2 \in L^0\}\\
	&=  1_A\essinf\Set{1_Am_1+1_{A^c}n_1 \in L^0:1_A\alpha\left((1_{A}m_1+1_{A^c}n_1)\kappa\right)\geq 1_A\alpha(X)}\\
	&\quad + 1_{A^c}\essinf\set{1_{A^c}m_2+1_An_2 \in L^0:1_{A^c}\alpha\left((1_{A}n_2+1_{A^c}m_2)\kappa\right)\geq 1_{A^c}\alpha(Y)}\\
	&=1_{A}C(X)+1_{A^c}C(Y)
\end{align*}
where in the second equality we used the $\kappa$-boundedness assumption to ensure the existence of $n_1,n_2\in L^0$, such that $1_{A^c}\alpha(1_am_1+1_{A^c}n_1)\kappa)\geq 1_{A^c}\alpha(X)$ and $1_{A}\alpha((1_{A}n_2+1_{A^c}m_2)\kappa)\geq 1_{A}\alpha(Y)$.
Hence, $C$ is local. Thus, $C$ is a $\kappa$-conditional certainty equivalent.

	Next, we will show that \eqref{eq:relation:alpha:C1} is fulfilled.
	Clearly, $\alpha(X)\geq \alpha(Y)$ implies $C(X)\geq C(Y)$.
	Suppose that $\alpha(X)\geq \alpha(Y)$, and  $\alpha(X)>\alpha(Y)$ on some $A \in \mathscr{G}_{+}$.
	Since $C$ is a $\kappa$-conditional certainty equivalent of $\alpha$, it follows that $\alpha(C(X)\kappa) > \alpha(Y)$ on $A$.
	By similar arguments as above, since $\alpha$ is $\kappa$-sensitive there exists $\varepsilon\in L^0_+$ with $\varepsilon>0$ on $A$, and $\varepsilon=0$ on $A^c$, such that $\alpha((C(X)-\varepsilon)\kappa)\geq \alpha(Y)$.
	Hence, $C(X)-\varepsilon\in\mathcal{C}(Y)$, and thus $C(X)-\varepsilon \geq C(Y)$, which implies that $C(X)>C(Y)$ on $A$.
	Thus \eqref{eq:relation:alpha:C1} is established.

	Note that by means of relation \eqref{eq:relation:alpha:C1}, $\alpha$ and $C$ define the same conditional preference order on $\mathcal{X}$.
	Thus, $C$ is itself a conditional assessment index.\footnote{Both monotonicity and quasiconcavity of $C$ follow from corresponding properties of $\alpha$ and relation \eqref{eq:relation:alpha:C1}.}
	Also by \eqref{eq:relation:alpha:C1} we conclude that $\alpha$ being $\kappa$-bounded implies that $C$ is $\kappa$-bounded.
	Next we will show that $C$ is $\kappa$-sensitive.
	Take $m \in L^0$ and $X \in \mathcal{X}$ such that $C(m\kappa)>C(X)$ on some set $A \in \mathscr{G}$.
	Using locality of $\alpha$ and $C$, and by \eqref{eq:relation:alpha:C1}, it follows that $\alpha(m\kappa)>\alpha(X)$ on $A$.
	Hence, by $\kappa$-sensitivity of $\alpha$,  there exists $\varepsilon \in L^0_{+}$ with $\varepsilon>0$ on $A$ such that $\alpha( (m-\varepsilon )\kappa)\geq \alpha(X)$ on $A$.
	Again, using locality and \eqref{eq:relation:alpha:C1},  we conclude that  $C( (m-\varepsilon)\kappa)\geq C(X)$ on $A$.
	This shows that $C$ is $\kappa$-sensitive.

Let us assume that $\alpha$ in addition is $\kappa$-strictly increasing.
	We claim that $C(m\kappa)=m, \ m\in L^0$.
	Indeed, by definition \ref{eq:certEquivalent}, we have that $C(m\kappa)\leq m$.
	Suppose that  $C(m\kappa)<m$ on some set $A \in \mathscr{G}_{+}$.
    Since $\alpha$ is $\kappa$-strictly increasing, it follows that $\alpha\left( C(m\kappa) \right)<\alpha(m\kappa)$ on $A$.
	However, $\alpha(C(m\kappa))=\alpha(m\kappa)$ which is a contradiction.
	Next we will show that any  $\kappa$-certainty certainty  equivalent $\widetilde{C}$ of $\alpha$ is equal to $C$.
	Given $X\in \mathcal{X}$, we note that $\widetilde{C}(X)\in \mathcal{C}(X)$, and hence $C(X)\leq \widetilde{C}(X)$.
	Suppose that $C(X)<\widetilde{C}(X)$ on some $A$.
	Since $\alpha$ is $\kappa$-strictly increasing and local, it follows that $\alpha(X)=\alpha(C(X)\kappa)<\alpha(\widetilde{C}(X)\kappa)=\alpha(X)$ on $A$ which is a contradiction.
	Thus $\widetilde{C}=C$.
	Finally, it remains to show that $C$ is upper semicontinuous.
	For a given $m \in L^0$, using the statements proved above, we deduce that
	\begin{equation*}
		\Set{X \in \mathcal{X}:C(X)\geq m}=\Set{X \in \mathcal{X}: C(X)\geq C(m\kappa)}=\Set{X \in \mathcal{X}: \alpha(X)\geq \alpha(m\kappa)}.
	\end{equation*}
	The latter set is closed since $\alpha$ is upper semicontinuous, and hence, the upper level sets of $C$ are also closed, and thus $C$ is upper semicontinuous.
\end{proof}

\begin{remark}
	 Note that if $\alpha$ is a $\kappa$-bounded and $\kappa$-cash additive acceptability index, then, up to a translation by $\alpha(0)$, $\alpha$ is a certainty equivalent of itself.
	 In other terms $C(X):=\alpha(X)-\alpha(0)$ is a certainty equivalent of $\alpha$.
	 Indeed, $\kappa$-boundedness and $\kappa$-cash additive implie that $\alpha$ only takes values in $L^0$, and thus $C$ also takes values only in $L^0$.
	 Moreover, since $\alpha(m\kappa)=\alpha(0)+m$, we have that $\alpha(C(X)\kappa)=\alpha(0)+C(X)=\alpha(X)$.
\end{remark}

\section{Assessment Indices for Stochastic Processes}\label{sec03}

We will now apply the theory developed in Section \ref{sec:AIGEneral} to study of assessment indices for discrete time, real valued random processes.

\subsection{Conditional Assessment Indices for Stochastic  Processes}\label{sec031}
In this section we follow the approach and notations for stochastic processes introduced by \citet{AcciaioFollmerPenner2010}.
Given a time horizon $T\in \bN$, let $(\Omega,\mathscr{F},P)$ be a probability space with a filtration $(\mathscr{F}_s)$ where $s$ is in $\set{0,\ldots,T}$.
Given $t \in \set{0,\ldots,T}$, we denote by $\mathscr{O}^t$ the optional $\sigma$-algebra up to time $t$ on the product space $\widetilde{\Omega}:=\Omega \times \set{0,\ldots,T}$, which is equal to
\begin{equation}\label{eq:ot}
	\mathscr{O}^t=\sigma\left(\Set{A_s\times\set{s},A_t\times\set{t,\ldots,T}:s<t,A_s \in \mathscr{F}_s\text{ and }A_t \in \mathscr{F}_t}\right).
\end{equation}

We define $\mathscr{O}:=\mathscr{O}^T$.
On $\widetilde{\Omega}$ we denote by $\widetilde{P}$ a probability  measure, which is defined by the expectation
\begin{equation*}
	E_{\widetilde P}[X] := E_P\left[ \sum_{s=0}^T X_s\mu_s \right],
\end{equation*}
where $\mu$ is some adapted process such that $\sum_{s=0}^T \mu_s=1$ and $\mu_s>0$.
Risking a slight abuse of notation, we shall sometimes write  $\widetilde{P}=P\otimes \mu$.

Note that a random variable $X$ belongs to $L^0(\mathscr{O}^t)$ if, and only if, seen as a process $X=(X_s)$, it is $(\mathscr{F}_s)$-adapted up to time $t$ and constant afterwards.\footnote{
By ``constant afterwards'' we mean that $X_s=X_t$ for $s\geq t$.}
In particular, any $X \in L^0(\mathscr{O})$, seen as a process, is $(\mathscr{F}_s)$-adapted and it is clear that $L^0(\mathscr{O}^{t_1})\subseteq L^0(\mathscr{O}^{t_2})$ for any $t_1,t_2 \in \set{0,\dots,T}$ with $t_1\leq t_2$.

For any $X \in L^0(\mathscr{O})$, we denote by $\Delta X_s:=(X_s-X_{s-1}),$ with the convention $X_{-1}=0,$ so that $X_s=\sum_{k=0}^s \Delta X_s$.
\begin{remark}
	In what follows a process $X \in L^0(\mathscr{O})$ will be interpreted either as a discounted cumulative cash flow (discounted cumulative dividend) process, or as a discounted cash flow process (discounted dividend process).
	If $X$ is a discounted cumulative cash flow, then $\Delta X$ represents the discounted dividend process.
\end{remark}

From now through the end of this subsection we fix $t\in \set{0,1,\ldots,T}.$ For $q \in [1,+\infty]$, we denote by $\widetilde{\mathcal{M}}_{q,t}$, the set of probability measures $\widetilde{Q}$ on $\mathscr{O}$ absolutely continuous with respect to $\widetilde{P}$, such that $d\widetilde{Q}/d\widetilde{P} \in L^q(\mathscr{O})$ and $\widetilde{Q}=\widetilde{P}$ on $\mathscr{O}^t$.
In case $q=1$, and if no confusion arises, we will drop $q$ from the notations. Similarly, we denote by $\mathcal{M}_{t}$ the set of probability measures $Q$ on $\mathscr{F}_T$ absolutely continuous with respect to $P$, such that $dQ/dP \in L^1(\mathscr{F}_T)$ and $Q=P$ on $\mathscr{F}_t$.
Given $Q \in \mathcal{M}_{t}$ we denote by $\Gamma_t(Q)$ and $\mathcal{D}_t(Q)$ the set of optional random measures and predictable discounting processes\footnote{It is important to stress that process $D$ does not represent a financial discount factor. For the meaning and the role of this process we refer to Theorem~\ref{thm:robreb:proct}.} from time $t$ respectively, that is
\begin{align*}
	\Gamma_t(Q)&:=\Set{(\gamma_s)\in L^0_{+}(\mathscr{O}):\gamma_0=\ldots=\gamma_{t-1}=0\text{ and }\sum_{s=t}^T \gamma_s=1, \, Q\text{-almost surely}},\\
	\mathcal{D}_{t}(Q)&:=\Set{(D_t) \in L^0_{+}(\mathscr{O}):D_0=\ldots=D_t=1,\,Q\text{-almost surely, }D\text{ is predictable and decreasing}}.
\end{align*}

\begin{lemma}\label{lemma:repGammaQ}
	Let $Q \in \mathcal{M}_{t}$. There exists a one-to-one relation between $\gamma \in \Gamma_t(Q)$ and $D \in \mathcal{D}_t(Q)$ given by
	\begin{equation}
		D_0=1,\quad \text{and}\quad D_s=1-\sum_{k=0}^{s-1}\gamma_k,\quad \text{for}\quad0<s\leq T,
		\label{eq:onetoone:relation01}
	\end{equation}
	\begin{equation}
		\gamma_s=D_s-D_{s+1},\quad\text{for}\quad 0\leq s<T\quad\text{and}\quad\gamma_{T}=1-\sum_{k=0}^{T-1}\gamma_k=D_T.
		\label{eq:onetoone:relation02}
	\end{equation}
	Furthermore, for any $X \in L^0(\mathscr{O})$, it holds
	\begin{equation}
		\langle \gamma , X\rangle_t:=\sum_{s=t}^T \gamma_s X_s=X_t+\sum_{s=t+1}^T D_s \Delta X_s=:(D\bullet X)_t
		\label{eq:onetoone:relation03}
	\end{equation}
	with the convention that $D_{T+1}=0$.

	Finally, $\widetilde{Q}\in \widetilde{\mathcal{M}}_{t}$ if and only if there exists $Q \in \mathcal{M}_{t}$ and $\gamma \in \Gamma_t(Q)$ or the corresponding $D \in \mathcal{D}_{t}(Q)$ such that\footnote{Where $Q\otimes \gamma$ has to be understood as the product measure with density $(Z_t\frac{\gamma_t}{\mu_t})$, whereby $Z_t=dQ/dP_{\mid \mathscr{F}_t}$ and $Q\otimes D$ is the  product measure with density $(Z_t\frac{(D_t-D_{t+1})}{\mu_t})$.}
	$\widetilde{Q}=Q\otimes \gamma$ or $\widetilde{Q}=Q\otimes D$.
\end{lemma}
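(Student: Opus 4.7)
The argument naturally splits into three pieces corresponding to the three claims of the lemma: the $\gamma \leftrightarrow D$ bijection, the Abel-style identity \eqref{eq:onetoone:relation03}, and the characterisation of $\widetilde{\mathcal{M}}_t$.

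For the bijection between $\Gamma_t(Q)$ and $\mathcal{D}_t(Q)$, I would start from $\gamma\in\Gamma_t(Q)$ and define $D$ via \eqref{eq:onetoone:relation01}. The properties are then checked in order:  $D_0=\ldots=D_t=1$ follows from $\gamma_k=0$ for $k<t$; predictability follows because $D_s$ is a deterministic function of $\gamma_0,\ldots,\gamma_{s-1}$, each of which is $\mathscr{F}_{s-1}$-measurable; the decreasing property is immediate from $D_s-D_{s+1}=\gamma_s\ge 0$; and non-negativity is obtained by writing $D_s = 1-\sum_{k=0}^{s-1}\gamma_k \ge 1-\sum_{k=t}^{T}\gamma_k = 0$ using the normalisation $\sum_{k=t}^T\gamma_k=1$ $Q$-a.s. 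The reverse direction $D\mapsto\gamma$ via \eqref{eq:onetoone:relation02} is symmetric: adaptedness of $\gamma_s$ follows from predictability of $D_{s+1}$ (which is $\mathscr{F}_s$-measurable), $\gamma_s=0$ for $s<t$ follows from $D_s=D_{s+1}=1$ in that range, and the normalisation $\sum_{s=t}^T\gamma_s=D_t-D_T+D_T=1$ is a telescoping sum. That the two maps are mutual inverses is another telescoping check.

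For the summation identity \eqref{eq:onetoone:relation03}, the tool is Abel's summation by parts. Using $\gamma_s = D_s-D_{s+1}$ for all $s\in\{t,\ldots,T\}$ (with the convention $D_{T+1}=0$), I would write
\begin{align*}
\sum_{s=t}^T \gamma_s X_s = \sum_{s=t}^T (D_s-D_{s+1}) X_s = D_t X_t - D_{T+1} X_T + \sum_{s=t+1}^T D_s(X_s-X_{s-1}),
\end{align*}
which collapses to $X_t + \sum_{s=t+1}^T D_s \Delta X_s$ since $D_t=1$ and $D_{T+1}=0$.

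The third part is the main obstacle, since it requires actually producing the disintegration of a measure on $\mathscr{O}$ into an $\mathscr{F}_T$-marginal and an adapted random time-weight. For the \emph{if} direction I would start from $Q\in\mathcal{M}_t$ with density process $(Z_s)$ and $\gamma\in\Gamma_t(Q)$, set $\widetilde{Z}_s := Z_s\gamma_s/\mu_s$, and verify that $\widetilde{Z}\in L^1(\mathscr{O})$, that $E_{\widetilde{P}}[\widetilde{Z}]=E_P[\sum_s Z_s\gamma_s]=E_Q[\sum_s\gamma_s]=1$, and that for any $X\in L^0(\mathscr{O}^t)$ (hence $X_s=X_t$ for $s\ge t$, and $\gamma_s=0$ for $s<t$) one has $E_{\widetilde{Q}}[X]=E_P[X_t\sum_{s\ge t}Z_s\gamma_s]=E_Q[X_t]=E_P[X_t]=E_{\widetilde{P}}[X]$, exploiting $Q=P$ on $\mathscr{F}_t$ and $\sum_{s\ge t}\gamma_s=1$ $Q$-a.s., so indeed $\widetilde{Q}\in\widetilde{\mathcal{M}}_t$. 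For the converse, given $\widetilde{Q}\in\widetilde{\mathcal{M}}_t$ with density $\widetilde{Z}$, I would define $Z:=\sum_{s=0}^T \widetilde{Z}_s\mu_s\in L^1(\mathscr{F}_T)$, check that $E_P[Z]=E_{\widetilde{P}}[\widetilde{Z}]=1$ so it induces a probability $Q\ll P$, and then recover $\gamma_s$ by reversing the product formula, i.e.\ $\gamma_s := \widetilde{Z}_s\mu_s/Z_s$ on $\{Z_s>0\}$ (and $0$ elsewhere), where $Z_s=E_P[Z\mid\mathscr{F}_s]$. The delicate points to verify are (a) that $\gamma_s$ so defined is $\mathscr{F}_s$-measurable, which requires using that $\widetilde{Z}_s\mu_s$ is $\mathscr{F}_s$-measurable together with the tower property, and (b) that the constraint $\widetilde{Q}=\widetilde{P}$ on $\mathscr{O}^t$ forces both $\gamma_s=0$ for $s<t$ and $Q=P$ on $\mathscr{F}_t$; this is done by testing against indicators $1_{A_s}\cdot 1_{\{s\}}$ with $A_s\in\mathscr{F}_s$, $s<t$, and then against $1_{A_t}\cdot 1_{\{t,\ldots,T\}}$ with $A_t\in\mathscr{F}_t$. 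Finally $\sum_{s\ge t}\gamma_s=1$ $Q$-a.s.\ follows from $\sum_s \widetilde{Z}_s\mu_s=Z$. The $D$-version is then immediate from Part 1.
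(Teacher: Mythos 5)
For Parts~1 and~2 (the $\gamma$--$D$ bijection and the Abel summation by parts) your argument is correct and complete; the checks you run -- predictability, monotonicity, nonnegativity via $\sum_{s\ge t}\gamma_s=1$, and the telescoping with $D_t=1$, $D_{T+1}=0$ -- are exactly the ones required. (The paper itself supplies no proof of this lemma and defers to \citet{AcciaioFollmerPenner2010}, so there is nothing to compare against there.)

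Part~3 contains a genuine gap. In the ``if'' direction you close with the chain $E_{\widetilde{Q}}[X]=E_P[X_t\sum_{s\ge t}Z_s\gamma_s]=E_Q[X_t]=E_P[X_t]=E_{\widetilde{P}}[X]$, and the final equality is false in general: for $X\in L^0(\mathscr{O}^t)$ one has $E_{\widetilde{P}}[X]=E_P[\sum_{s<t}X_s\mu_s+X_t\sum_{s\ge t}\mu_s]$, so taking $X=1_{\Omega\times\{t,\ldots,T\}}$ gives $E_{\widetilde{P}}[X]=E_P[\sum_{s\ge t}\mu_s]<1=E_P[X_t]$ whenever $t\ge 1$, because every $\mu_s>0$. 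Structurally, since $\gamma_s=0$ for $s<t$ the density $\widetilde{Z}_s=Z_s\gamma_s/\mu_s$ vanishes for $s<t$, so $Q\otimes\gamma$ assigns mass zero to every generator $A_s\times\{s\}$, $s<t$, of $\mathscr{O}^t$, whereas $\widetilde{P}$ gives these sets positive mass; the conclusion ``$\widetilde{Q}=\widetilde{P}$ on $\mathscr{O}^t$'' therefore cannot follow from the density you wrote down. Your item~(b) in the converse direction fails for the same reason: testing against $1_{A_s\times\{s\}}$ with $s<t$ forces $\widetilde{Z}_s=1$, hence $\gamma_s=\mu_s/Z_s>0$, which is the opposite of $\gamma_s=0$. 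You need to re-examine what the condition ``$\widetilde{Q}=\widetilde{P}$ on $\mathscr{O}^t$'' actually imposes on the density process and reconcile that with the requirement $\gamma_s=0$ for $s<t$ in the definition of $\Gamma_t(Q)$, working from the precise formulation in \citet{AcciaioFollmerPenner2010}.
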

This was proven in \citep{AcciaioFollmerPenner2010}.
Note that the additional term $X_{t}$ in \eqref{eq:onetoone:relation03} of the integration by part is missing in \citep{AcciaioFollmerPenner2010}.
Next we define the sets\footnote{Analogously, we define the sets $\mathcal{M}\otimes_t \Gamma,$ and $\mathcal{M}\otimes_{q,t} \Gamma, q\in(1,\infty]$. }
\begin{align}
	\mathcal{M}\otimes_t \mathcal{D}&:=\Set{Q\otimes D:Q \in \mathcal{M}_{1}\text{ and }D \in \mathcal{D}_{t}(Q)};\\
\mathcal{M}\otimes_{q,t} \mathcal{D}&:=\Set{Q \otimes D:Q \in \mathcal{M}_{t}, D \in \mathcal{D}_{t}(Q),\text{ and }Q \otimes D \in \widetilde{\mathcal{M}}_{q,t} }, \quad q \in (1,+\infty].
\end{align}

\begin{remark}\label{rem:tildeq}
	By means of Lemma \ref{lemma:repGammaQ}, it holds $\widetilde{Q}\in \widetilde{\mathcal{M}}_{q,t}$ if and only if $\widetilde{Q}=Q \otimes D \in \mathcal{M}\otimes_{q,t} \mathcal{D}$, or $\widetilde{Q}=Q \otimes \gamma \in \mathcal{M}\otimes_{q,t} \Gamma$, $q\in[1,\infty]$.
\end{remark}

Following \citep{KupperVogelpoth2009}  we define the conditional $p$-norm
\begin{equation*}
	\norm{X}_{t,p}:=
	\begin{cases}
		\displaystyle E_{\widetilde{P}}\left[ \abs{X}^p \Mid \mathscr{O}^t\right]^{1/p},&\text{ if }p<\infty\\
		\\
		\displaystyle \essinf\Set{\xi \in L^0(\mathscr{O}^t):\abs{X}\leq \xi},&\text{ if }p=\infty,
	\end{cases}
\end{equation*}
on the basis of which we define the spaces
\begin{equation*}
	L^{t,p}\left( \mathscr{O} \right):=\Set{X \in L^0\left( \mathscr{O} \right):\norm{X}_{t,p}\in L^0\left( \mathscr{O}^t \right)}.
\end{equation*}
By means of \citep[Proposition 4.4]{KupperVogelpoth2009}, it holds that
\begin{equation}
	L^{t,p}\left( \mathscr{O} \right)=L^0\left( \mathscr{O}^t \right)  L^p\left( \mathscr{O} \right), \quad 1\leq p\leq \infty.
	\label{splitit}
\end{equation}
It is shown in \citep{KupperVogelpoth2009} that $(L^{t,p}( \mathscr{O}),\norm{\cdot}_{t,p})$, with the order of almost sure dominance, is an $L^0\left( \mathscr{O}^t \right)$--normed module lattice.
For a fixed $0\leq t\leq T$ and $1\leq p\leq \infty$, we let $\mathcal{X}=L^{t,p}(\mathscr{O})$.
We equip $\mathcal{X} = L^{t,p}(\mathscr{O})$ with the $\norm{\cdot}_{t,p}$-topology for $1\leq p<\infty$, or the conditional weak${}^\ast$-topology $\sigma(\mathcal{X},L^{t,1})$ if $p=\infty$.

We say that a functional $\alpha:\mathcal{X}\to \bar{L}^0(\mathscr{O}^t)$ is monotone  if $\alpha(X)\geq \alpha(Y)$ whenever $X \geq Y$ $\widetilde{P}$-almost surely\footnote{The monotonicity in this case coincides with the monotonicity with respect to the cone $\cK=\set{X\geq 0}$}.

\begin{theorem}\label{thm:robreb:proct}
	Let $\alpha:\mathcal{X}\to \bar L^{0}(\mathscr{O}^t)$ be an upper semicontinuous conditional assessment index.
	Then $\alpha$ has a robust representation of the form
	\begin{equation}\label{thm:eq:robrep:proct}
		\alpha\left( X \right)=\essinf_{\widetilde{Q} \in \widetilde{\mathcal{M}}_{q,t} } R\left( \widetilde{Q}, E_{\widetilde{Q}}\left[ X \mid \mathscr{O}^t \right] \right),
	\end{equation}
	for a unique minimal risk function $R:\widetilde{\mathcal{M}}_{q,t}\times \bar{L}^0(\mathscr{O}^t)\to \bar{L}^0(\mathscr{O}^t)$.

	This robust representation can be written in the following form
	\begin{align}
		\alpha_s\left(X\right)&=f_s(X_s), \quad s\leq t-1, \label{eq:daiProcS}
	\end{align}
	and,
	\begin{align}
		\alpha_{s}\left( X \right)=\alpha_t(X) & =\essinf_{Q\otimes \gamma \in \mathcal{M}\otimes_{q,t} \Gamma}R^{\prime}_{t}\left( Q\otimes \gamma, E_Q\left[ \sum_{k=t}^T \gamma_k X_k\Mid \mathscr{F}_t\right]\right) \label{eq:representation1} \\
		& =\essinf_{Q\otimes D \in \mathcal{M}\otimes_{q,t} \mathcal{D}}R^{\prime}_{t}\left( Q\otimes D, X_t+E_Q\left[ \sum_{k=t+1}^T D_k\Delta X_k\Mid \mathscr{F}_t\right]\right),\quad s\geq t ,\label{eq:representation2}
	\end{align}
	for an unique right-continuous increasing functions $f_s:L^0_s\to \bar{L}^0(\mathscr{F}_s)$ and minimal risk functions $R^{\prime}_t:\mathcal{M}\otimes_{q,t} \Gamma\times \bar{L}^0(\mathscr{F}_t)\to \bar{L}^0(\mathscr{F}_t)$.
\end{theorem}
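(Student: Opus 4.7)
The plan is to apply Theorem~\ref{thm:robrep} to the conditional module $\mathcal{X}=L^{t,p}(\mathscr{O})$ and then translate the abstract dual cone into the probabilistic language of $\widetilde{\mathcal{M}}_{q,t}$ by means of Lemma~\ref{lemma:repGammaQ}. First I would identify the relevant $L^0(\mathscr{O}^t)$-dual. Endowing $\mathcal{X}$ with the $\norm{\cdot}_{t,p}$-topology for $1\leq p<\infty$, and with the conditional weak${}^\ast$-topology $\sigma(\mathcal{X},L^{t,1})$ for $p=\infty$, the module-duality results of \citep{KupperVogelpoth2009} yield $\mathcal{X}^\ast\cong L^{t,q}(\mathscr{O})$ under the pairing $\langle Y,X\rangle_t:=E_{\widetilde{P}}[XY\mid\mathscr{O}^t]$, with $1/p+1/q=1$. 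The closed monotonicity cone is $\mathcal{K}=\{X\in\mathcal{X}:X\geq 0\}$, whose polar equals $\mathcal{K}^\circ=\{Y\in\mathcal{X}^\ast:Y\geq 0\}$. Choosing $\kappa$ to be the constant process~$1$, we have $\kappa\in\mathcal{K}$ and $\langle Y,\kappa\rangle_t=E_{\widetilde{P}}[Y\mid\mathscr{O}^t]$, so Remark~\ref{rem:normalizedset} allows realizing the representation on the normalized polar cone $\mathcal{K}^\circ_\kappa=\{Y\in\mathcal{K}^\circ:E_{\widetilde{P}}[Y\mid\mathscr{O}^t]=1\}$. This set is in bijection with $\widetilde{\mathcal{M}}_{q,t}$ via $Y=d\widetilde{Q}/d\widetilde{P}$, since the normalization $E_{\widetilde{P}}[Y\mid\mathscr{O}^t]=1$ is exactly the condition $\widetilde{Q}=\widetilde{P}$ on $\mathscr{O}^t$; under this bijection, $\langle Y,X\rangle_t=E_{\widetilde{Q}}[X\mid\mathscr{O}^t]$. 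Feeding the identification into Theorem~\ref{thm:robrep} produces~\eqref{thm:eq:robrep:proct} with a unique minimal $R$.

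Next, to derive \eqref{eq:representation1} and \eqref{eq:representation2}, I would use Remark~\ref{rem:tildeq} to parametrize every $\widetilde{Q}\in\widetilde{\mathcal{M}}_{q,t}$ as $Q\otimes\gamma$, or equivalently $Q\otimes D$, with $Q\in\mathcal{M}_t$, $\gamma\in\Gamma_t(Q)$, $D\in\mathcal{D}_t(Q)$. On the fibre $s\geq t$ a direct computation using $\gamma_k=0$ for $k<t$ and $\sum_{k\geq t}\gamma_k=1$ shows that the $\mathscr{O}^t$-conditional expectation of $X$ under $\widetilde{Q}$ collapses to $E_Q[\sum_{k=t}^T\gamma_k X_k\mid\mathscr{F}_t]$, which by the integration-by-parts identity~\eqref{eq:onetoone:relation03} equals $X_t+E_Q[\sum_{k=t+1}^T D_k\Delta X_k\mid\mathscr{F}_t]$. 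Setting $R'_t$ to be the restriction of $R$ to this fibre yields \eqref{eq:representation1} and \eqref{eq:representation2}; minimality and uniqueness of $R'_t$ on $\mathcal{M}\otimes_{q,t}\Gamma$ (resp.\ $\mathcal{M}\otimes_{q,t}\mathcal{D}$) are inherited from those of $R$.

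Formula~\eqref{eq:daiProcS} would then come from exploiting locality of $\alpha$ on the atoms of $\mathscr{O}^t$. For each $s<t$ the set $B_s:=\Omega\times\{s\}$ belongs to $\mathscr{O}^t$, so applying \eqref{eq:localitywichtig2} on the time-$s$ fibre gives $\alpha(1_{B_s}X+1_{B_s^c}Y)_s=\alpha(X)_s$ for every $Y\in\mathcal{X}$. Since $1_{B_s}X$ depends on $X$ only through $X_s$, this forces $\alpha_s(X)=f_s(X_s)$ for some local $f_s:L^0(\mathscr{F}_s)\to\bar{L}^0(\mathscr{F}_s)$, whose monotonicity and right-continuity are inherited from those of $\alpha$. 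For $s\geq t$, $\mathscr{O}^t$-measurability of $\alpha(X)$ directly gives $\alpha(X)_s=\alpha(X)_t$, completing the decomposition.

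I expect the main obstacle to lie in step one, namely in justifying the $L^0(\mathscr{O}^t)$-module duality $\mathcal{X}^\ast\cong L^{t,q}(\mathscr{O})$ together with the precise matching of the normalized polar cone with the Radon--Nikodym densities of $\widetilde{\mathcal{M}}_{q,t}$ (in particular reconciling the global integrability $d\widetilde{Q}/d\widetilde{P}\in L^q(\mathscr{O})$ with the conditional $L^{t,q}$-normalization). Once this identification is in place, the rest amounts to a bookkeeping exercise combining Lemma~\ref{lemma:repGammaQ}, the integration-by-parts formula, and the locality identity~\eqref{eq:localitywichtig2}.
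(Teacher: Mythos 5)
Your proposal follows essentially the same route as the paper: identify $\mathcal{X}^\ast\cong L^{t,q}(\mathscr{O})$, normalize the polar cone via Remark~\ref{rem:normalizedset} with $\kappa\equiv 1$ and identify $\mathcal{K}^\circ_\kappa$ with $\widetilde{\mathcal{M}}_{q,t}$, apply Theorem~\ref{thm:robrep}, then use locality on the atoms $\Omega\times\{s\}$ for $s<t$ and on $\Omega\times\{t,\ldots,T\}$ for $s\geq t$ together with the explicit computation of $E_{\widetilde{Q}}[X\mid\mathscr{O}^t]$ and Lemma~\ref{lemma:repGammaQ}. The only cosmetic differences are that the paper separates $p=\infty$ from $1\leq p<\infty$, records the formula for $E_{\widetilde{Q}}[X\mid\mathscr{O}^t]$ as a standalone displayed identity, and attributes right-continuity of $f_s$ to the structure of the minimal risk function $R$ rather than directly to $\alpha$.
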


\begin{remark}
	From the financial point of view, the representation \eqref{eq:representation1} is meaningful if $X$ is a discounted cash flow (discounted dividend process), and the representation \eqref{eq:representation2} is meaningful if $X$ is a discounted cumulative cash flow (discounted cumulative dividend process).
\end{remark}

\begin{proof}

	Since $\alpha$ is monotone with respect to cumulative cash flows, it holds $X\succcurlyeq Y$ if and only if $X-Y \in \mathcal{K}:=\set{U \in \mathcal{X}: U\geq 0}$ and so $\mathcal{K}^\circ=\set{Z \in L^{t,q}(\mathscr{O}):Z\geq 0}.$ We will make use of the normalized polar cone  $\mathcal{K}^\circ_{1}:=\set{Z \in L^{t,q}(\mathscr{O}):Z\geq 0\text{ and }E_{\widetilde{P}}[Z\mid \mathscr{O}^t]=1}$, which can be identified with $\widetilde{\mathcal{M}}_{q,t}$.
	Applying Theorem~\ref{thm:robrep} and Remark \ref{rem:normalizedset}, there exists a unique minimal conditional risk function $R:\widetilde{\mathcal{M}}_{q,t}\times \bar{L}^0(\mathscr{O}^t)\to \bar{L}^0(\mathscr{O}^t)$ such that  the representation \eqref{thm:eq:robrep:proct} holds true.

	To show the second claim of the theorem assume first that $p=\infty$.
	First note that
	\begin{equation}\label{eq:expCondO0}
		E_{\widetilde{Q}}\left[ X\Mid \mathscr{O}^t \right]=\left( X_0',\ldots,X_{t-1}',E_{Q}\left[ \langle \gamma, X \rangle_t\Mid \mathscr{F}_t \right],\ldots, E_{Q}\left[ \langle \gamma, X \rangle_t\Mid \mathscr{F}_t \right]\right),
	\end{equation}
	for all $X \in L^{t,\infty}(\mathscr{O})$ and all $\widetilde{Q}=Q\otimes \gamma$, where $Q \in \mathcal{M}_{t}$ and $\gamma \in \Gamma_t(Q)$, and where $X'$ is any element of $L(\mathscr{O})$. Indeed, suppose that $X \in L^{t,\infty}\left( \mathscr{O} \right)$, and denote by $Y$ the random variable on the right hand side of \eqref{eq:expCondO0}.
	Let $A=(A_0, A_1, \ldots, A_t, A_t, \ldots, A_t)$ such that $A_s \in \mathscr{F}_s$ for any $s\leq t$.
	Then, 	
	\begin{align*}
		E_{\tilde{Q}}\left[ X 1_A \right]
		& =\sum_{s=0}^{t-1} E_Q\left[X_s 1_{A_s} \gamma_s\right]+ \sum_{s=t}^T E_Q\left[ X_s 1_{A_t} \gamma_s\right]\\
		& = 0 + E_Q\left[ E_Q\left[\sum_{s=t}^T X_s\gamma_s\Mid \mathscr{F}_t\right] 1_{A_t} \right] \\
		& = E_{\tilde{Q}}\left[ Y 1_{A} \right],
	\end{align*}
	and hence \eqref{eq:expCondO0} is proved. For convenience, we will take $X'=X$ in what follows.

	By Remark~\ref{rem:tildeq}, $\widetilde{Q}\in \widetilde{\mathcal{M}}_{t}$ if and only if $\widetilde{Q}=Q\otimes \gamma$ for $Q \in \mathcal{M}_{t}$ and $\gamma \in \Gamma_t(Q)$.
	For $s\leq t-1$ we use locality for $A=\Omega \times \set{s} \in \mathscr{O}^t$ which yields $1_{\set{s}} \alpha(1_{\set{s}}X)=1_{\set{s}} \alpha(X)$ since $1_{A}=1_{\set{s}}$.
	Thus, $\alpha_s(X) = \alpha_s(0,\ldots,X_s,0,\ldots) =: \alpha_{s}(X_s)$.
	Since\footnote{By $1_{\set{s}}(Q\otimes \gamma)$ we naturally mean the density of $Q\otimes \gamma$ with respect to $\widetilde{P}$ at time $s$.} $1_{\set{s}}(Q\otimes \gamma)=1_{\set{s}}$, for any $Q\in\mathcal{M}_t, \gamma\in\Gamma_t(Q)$, and using locality of $R$ and \eqref{eq:expCondO0}, we deduce that
	\begin{equation*}
		\alpha_s(X)=\alpha_s(X_s)=\essinf_{Q\otimes \gamma \in \mathcal{M}\otimes_{t}\Gamma}R_s(1_{\set{s}}(Q\otimes \gamma),(0,\ldots,X_s,0,\ldots))=:f_s(X_s), \quad s\leq t-1,
	\end{equation*}
	and thus \eqref{eq:daiProcS} is established.
	In the case $s\geq t$ we apply locality to the set $\Omega\times\set{t,\ldots,T}$.
	Hence, we see that $\alpha_s(X)$ is equal to $\alpha_t(X)$ for all $s\geq t$ and using \eqref{eq:expCondO0} we get
	\begin{equation}\label{shortcut}
		\alpha_t(X)= \essinf_{Q\otimes \gamma \in \mathcal{M}\otimes_t \Gamma} R^{\prime}_t\left(Q\otimes \gamma,E_{Q}\left[ \langle \gamma, X \rangle_t\Mid \mathscr{F}_t \right]\right),
	\end{equation}
	where
	\begin{equation*}
		R^{\prime}_t\left(Q\otimes \gamma,s_t\right) :=R_t \left(Q\otimes \gamma, \left(0,\ldots, 0, s_t,\ldots,s_t\right)\right), \quad s_t \in \bar{L}^0(\mathscr{F}_t),
	\end{equation*}
	is a uniquely determined risk function. This proves the representation \eqref{eq:representation1}. By Lemma~\ref{lemma:repGammaQ} and \eqref{eq:representation1}, the represetnation \eqref{eq:representation2} follows immediately.

	As for the case $1\leq p< +\infty$,	in view of Remark \ref{rem:tildeq}, and proceeding analogously as above, we conclude that \eqref{eq:representation1} and \eqref{eq:representation2} are satisfied.
\end{proof}

\begin{remark}\label{rem:imp}
	It is in place here to remark that the assessment index $\alpha$ considered in this subsection corresponded to the fixed $t$. It would be then appropriate to denote it as, say, $\alpha^t=(\alpha^t_0,\ldots,\alpha^t_T).$ We would then refer to the collection $\set{\alpha^t,\ t=0,1,\ldots,T}$ as to \emph{dynamic assessment index}.
\end{remark}

\subsection{Path Dependent Dynamic Assessment Indices}\label{sec:PathDep}
Throughout this section we interpret $X$ as the discounted cumulative cash-flow.

It is seen from representation \eqref{eq:representation1} that $\alpha^t_t$ (cf. Remark~\ref{rem:imp}) only assesses the future of the process $X$, that is it only assesses $X_t,\ldots,X_T$, while $\alpha_s^t, \ s<t,$ is just a function of $X_s$.
This is a drawback since the past evolution of $X$ is not taken into account when assessing $X$ at time $t$ via $\alpha^t_t$, which for some applications may be an unwanted feature.

In this section  we propose an alternative approach, which assess $X$ at time $t$ accounting for the path evolution of $X$ time $t$.
Given $0\leq s\leq \tilde{s}\leq T$, we denote by $1_{[s,\tilde{s}]}$ a process, such that $1_{[s,\tilde{s}]}(u)=1$ for $s\leq u \leq \tilde{s}$, and $1_{[s,\tilde{s}]}(u)=0$ otherwise. Accordingly, we use the notation  $X_{[s,\tilde{s}]}$ for the random vector $X1_{[s,\tilde{s}]}=(0,\dots,0,X_s,\dots,X_{\tilde{s}},0,\dots,0)$.
Process $X$ stopped at time $t$ is written as $X^t$, that is $X^t=X_{\cdot\wedge t}$.
We recall the definition of the space $L^0(\mathscr{O}^t)$ (cf. \eqref{eq:ot}), and we define
\begin{align*}
	L^0(\mathscr{O}_{[s,\tilde{s}]})&:=\Set{X_{[s,\tilde{s}]}: X \in L^0(\mathscr{O})}.
\end{align*}
We remark that $\mathscr{O}_{[s,\tilde{s}]}$ is understood as the optional $\sigma$-algebra generated by processes $X_{[s,\tilde{s}]}$.
Hence, for a fixed  $t$ we may decompose any process $X \in L^0(\mathscr{O})$ as follows
\begin{equation}\label{eq:decompint}
	X=X_{[0,t-1]}+X_{[t,T]}=X^{t-1}+(X_{[t,T]}-X_{t-1}1_{[t,T]}),
\end{equation}
where $X_{[0,t-1]}\in L^0(\mathscr{O}_{[0,t-1]})$, $X_{[t,T]}\in L^0(\mathscr{O}_{[t,T]})$ and $X^{t-1} \in L^0(\mathscr{O}^{t-1})$.

It is evident that $L^0(\mathscr{O}_{[t,T]})$ is an $L^0(\mathscr{F}_{t})$-module\footnote{For the multiplication $\lambda X_{[t,T]}=(0,\ldots,0,\lambda X_{t},\ldots,\lambda X_T)$, $\lambda \in L^0(\mathscr{F}_t)$.}.
We further define
\begin{align*}
	\hat{\mathcal{M}}_{q,t}&:=\Set{\hat{Q}:\hat{Q}\text{ measure on }\Omega \times \set{0,\ldots,T},\hat{Q}\prec
	\hat{P}:=P\otimes \mu, d\hat{Q} / d \hat{P} \in L^q(\mathscr{O}_{[t,T]})},
\end{align*}
where $\mu$ is a measure on $\set{t,\ldots,T}$ such that $\mu_s>0$ for every $s \in \set{t,\ldots,T}$.
We further denote
\begin{align*}
	\hat{\mathcal{M}}\otimes_{q,t}\hat{\mathcal{D}}&:=\Set{Q\otimes D: Q\in \mathcal{M}_t, D \in \mathcal{D}_{t}(Q),\text{ and }Q\otimes D \in \hat{\mathcal{M}}_{q,t}}.
\end{align*}

\begin{remark}
	In this setting, let $\hat{Q}\in \hat{\mathcal{M}}_{1,t}$, and denote by $\Lambda =d\hat{Q}/d\hat{P} \in L^1(\mathscr{O}_{[t,T]})$.
	It holds that $U=(U_s)_{s=t}^T$, where $U_s=E_P[\sum_{k=s}^T \Lambda_k \mu_k\mid \mathscr{F}_s]$ for $s \in \set{t,\ldots,T},$
	is a super martingale fulfilling additionally $E_P[U_{t+1}\mid \mathscr{F}_t]=U_t=1$.
	Hence, using the It\^o-Watanbe decomposition $U=ZD$ where $D$ is a predictable decreasing process and $Z$ is a martingale, it follows that $D_t=1$ and $Z_T$ is a density of a probability measure $Q \in \mathcal{M}_t$.
	Reciprocally, $\Lambda_k=Z_k(D_k-D_{k+1})/\mu_k$ for every $k=t,\ldots,T-1$, and $\Lambda_T=Z_TD_T/\mu_T,$ where $Z$ is a martingale and $D$ is
	a predictable decreasing process with $D_{t}=1$, defines a density process for some $\hat{Q}\in \hat{\mathcal{M}}_{1,t}$.
	Hence, for every $X_{[t,T]} \in L^{t,p}(\mathscr{O}_{[t,T]})$,\footnote{In analogy to $L^{t,p}(\mathscr{O})=L^0(\mathscr{O}^t)L^p(\mathscr{O})$,
	we have  {$L^{t,p}(\mathscr{O}_{[t,T]})=L^0(\mathscr{F}_t)L^p(\mathscr{O}_{[t,T]})$.}}  it follows that
	\begin{align}\label{expectation}
		E_{\hat{Q}}\left[ X_{[t,T]}\Mid \mathscr{F}_t \right]
		& = E_{\hat{P}}\left[\Lambda X_{[t,T]}\Mid \mathscr{F}_t \right] = E_P\left[ \sum_{k=t}^T \Lambda_k X_k \mu_k \Mid \mathscr{F}_t \right] \nonumber \\
		&=E_{Q}\left[\sum_{k=t}^{T-1}(D_k-D_{k+1})X_k+D_TX_T \Mid \mathscr{F}_t \right] = E_{Q}\left[ X_{t}+ \sum_{k=t+1}^TD_k\Delta X_k \Mid \mathscr{F}_t \right].
	\end{align}
\end{remark}

We finally set
\begin{equation}\label{eq:xtp}
	\mathcal{X}^p_t:=\set{X\in L^0(\mathscr{O}):X_{[t,T]}\in L^{t,p}(\mathscr{O}_{[t,T]})}.
\end{equation}
Note that $\cX_{t}^p\subset\cX_{t+1}^p$.
\begin{definition}
	A function $\alpha:\mathcal{X}^{p}_t\to \bar{L}^0(\mathscr{F}_t)$ is called an upper semicontinuous
	\emph{path dependent assessment index} if for every fixed path $\bar{X}\in L^0(\mathscr{O}^{t-1})$, the function
	\begin{equation}
		X_{[t,T]}\longmapsto \alpha\left( \bar{X}_{[0,t-1]} + X_{[t,T]} \right),\quad X_{[t,T]} \in L^{t,p}(\mathscr{O}_{[t,T]}),
		\label{}
	\end{equation}
	is an upper semicontinuous assessment index.
\end{definition}
\begin{theorem}\label{th:repPast1}
	Let $\alpha$ be an upper semicontinuous path dependent assessment index.
	Then it has a robust representation of the form
	\begin{equation}
		\alpha(X)=\essinf_{Q\otimes D \in \hat{\mathcal{M}}\otimes_{q,t}\hat{\mathcal{D}} } R\left(X_{[0,t-1]};Q\otimes D;
		E_Q\left[  X_{t}+ \sum_{k=t+1}^TD_k\Delta X_k\Mid \mathscr{F}_t\right] \right),
		\label{thm:eq:repPast1}
	\end{equation}
	for a unique function $R:L^0(\mathscr{O}_{[0,t-1]})\times  \hat{\mathcal{M}}\otimes_{q,t}\hat{\mathcal{D}} \times
	\bar{L}^{0}(\mathscr{F}_t) \to \bar{L}^0(\mathscr{F}_t)$ for which $R(X_{[0,t-1]},\cdot,\cdot):
	\hat{\mathcal{M}}\otimes_{q,t}\hat{\mathcal{D}}\times \bar{L}^{0}(\mathscr{F}_t) \to \bar{L}^0(\mathscr{F}_t)$
	is a maximal risk function for every $X_{[0,t-1]}\in L^0(\mathscr{O}_{[0,t-1]})$.
\end{theorem}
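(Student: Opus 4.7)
The plan is to reduce Theorem~\ref{th:repPast1} to an application of Theorem~\ref{thm:robreb:proct} (equivalently Theorem~\ref{thm:robrep}) done path-by-path in the past. Concretely, for each fixed past $\bar X \in L^0(\mathscr{O}^{t-1})$, I would define
\begin{equation*}
    \alpha_{\bar X}(Y) := \alpha\bigl(\bar X_{[0,t-1]} + Y\bigr), \quad Y \in L^{t,p}(\mathscr{O}_{[t,T]}).
\end{equation*}
By the very definition of path dependent assessment index, $\alpha_{\bar X}$ is an upper semicontinuous conditional assessment index on the locally $L^0(\mathscr{F}_t)$-convex topological module $L^{t,p}(\mathscr{O}_{[t,T]})$ with values in $\bar L^0(\mathscr{F}_t)$, with monotonicity cone $\mathcal{K}_t = \{Y \in L^{t,p}(\mathscr{O}_{[t,T]}) : Y \geq 0\}$.

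Next, I would apply Theorem~\ref{thm:robrep} combined with Remark~\ref{rem:normalizedset}, using $\kappa = 1_{[t,T]}$ to normalize the polar cone. The normalized polar cone $\mathcal{K}_{t,1}^{\circ}$ is then identified with $\hat{\mathcal{M}}_{q,t}$, exactly as in the proof of Theorem~\ref{thm:robreb:proct}, where the role of $\widetilde{\mathcal{M}}_{q,t}$ there is played by $\hat{\mathcal{M}}_{q,t}$ here. This yields, for each fixed $\bar X$, a unique minimal risk function $R_{\bar X} : \hat{\mathcal{M}}_{q,t} \times \bar L^0(\mathscr{F}_t) \to \bar L^0(\mathscr{F}_t)$ such that
\begin{equation*}
    \alpha_{\bar X}(Y) = \essinf_{\hat Q \in \hat{\mathcal{M}}_{q,t}} R_{\bar X}\bigl(\hat Q, E_{\hat Q}[Y \mid \mathscr{F}_t]\bigr).
\end{equation*}

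To translate this into the form \eqref{thm:eq:repPast1}, I would use the It\^o--Watanabe decomposition recalled in the remark preceding the theorem, which gives the bijection $\hat{\mathcal{M}}_{q,t} \leftrightarrow \hat{\mathcal{M}}\otimes_{q,t}\hat{\mathcal{D}}$ via $\hat Q \leftrightarrow Q \otimes D$, together with the integration-by-parts identity \eqref{expectation}. Applied to $Y = X_{[t,T]}$ (noting $\Delta Y_t = X_t$ and $\Delta Y_k = \Delta X_k$ for $k>t$), this yields
\begin{equation*}
    E_{\hat Q}\bigl[X_{[t,T]} \bigm| \mathscr{F}_t\bigr] = E_Q\Bigl[X_t + \sum_{k=t+1}^T D_k \Delta X_k \Bigm| \mathscr{F}_t\Bigr].
\end{equation*}
Setting $R(X_{[0,t-1]}, Q\otimes D, s) := R_{X_{[0,t-1]}}(Q \otimes D, s)$ (after transporting through the above bijection) and evaluating $\alpha_{X_{[0,t-1]}}$ at $Y = X_{[t,T]}$ in the decomposition $X = X_{[0,t-1]} + X_{[t,T]}$ from \eqref{eq:decompint} gives the desired representation. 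Uniqueness of $R(X_{[0,t-1]},\cdot,\cdot)$ for each past follows directly from the uniqueness clause of Theorem~\ref{thm:robrep}.

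The main obstacle I anticipate is purely technical: verifying that the conditional module framework of Section~\ref{sec:AIGEneral} applies verbatim to $L^{t,p}(\mathscr{O}_{[t,T]})$ as an $L^0(\mathscr{F}_t)$-module (as opposed to an $L^0(\mathscr{O}^t)$-module as in Theorem~\ref{thm:robreb:proct}), and that the identification of the normalized polar cone with $\hat{\mathcal{M}}_{q,t}$ is correct under the reference measure $\hat P = P \otimes \mu$ on $\Omega \times \{t,\dots,T\}$. Once these identifications are in place, the path-by-path application of Theorem~\ref{thm:robrep} and the It\^o--Watanabe rewriting are essentially routine, and no further delicate locality argument is needed because the past $X_{[0,t-1]}$ is simply a parameter.
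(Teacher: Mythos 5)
Your proposal is correct and follows essentially the same route as the paper's proof: fix the past $\bar X$, apply Theorem~\ref{thm:robrep} to $\alpha(\bar X + \cdot)$ in analogy with Theorem~\ref{thm:robreb:proct}, identify $\hat{\mathcal{M}}_{q,t}$ with $\hat{\mathcal{M}}\otimes_{q,t}\hat{\mathcal{D}}$, and then rewrite $E_{\hat Q}[X_{[t,T]}\mid\mathscr{F}_t]$ via the integration-by-parts identity \eqref{expectation}. You have merely spelled out more explicitly the choice of $\kappa=1_{[t,T]}$, the polar-cone normalization, and the module framework over $L^0(\mathscr{F}_t)$ — details the paper compresses into the phrase ``in the fashion analogous to the proof of Theorem~\ref{thm:robreb:proct}.''
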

\begin{proof}
	First, we fix  $\bar{X}\in L^0(\mathscr{O}^{t-1})$ and we apply Theorem~\ref{thm:robrep} to
	$\alpha(\bar{X}+\cdot)$ in the fashion analogous the the proof of  Theorem \ref{thm:robreb:proct}
	in order to get the following representation

	\begin{equation}\label{thm:eq:robrep:proct:path}
		\alpha\left( \bar{X}_{[0,t-1]}+ X_{[t,T]}\right)=\essinf_{\hat{Q}\in \hat{\mathcal{M}}_{q,t}} \bar
		R\left(\bar{X}_{[0,t-1]}, \hat{Q}, E_{\hat{Q}}\left[ X_{[t,T]} \mid \mathscr{F}_t \right] \right).
	\end{equation}
	Similarly as in Remark~\ref{rem:tildeq}, we also have that $\hat{Q}\in \hat{\mathcal{M}}_{q,t}$ if and only
	if $\hat{Q}=Q \otimes D \in \hat{\mathcal{M}}\otimes_{q,t}\hat{\mathcal{D}}$.
	Hence, using \eqref{expectation} in representation \eqref{thm:eq:robrep:proct:path}, we conclude the proof.

\end{proof}

Note that $\alpha$ is no longer local with respect $\mathscr{O}^{t-1}$ on $\Omega \times \set{0,\ldots,T}$.
Let us now consider the following illustrating example.
\begin{example}\label{exep:pathdep}
	Let us consider a function $\alpha:\mathcal{X}_t^p\to \bar{L}^0(\mathscr{F}_t)$ given by the following formula
	\begin{equation}
		\alpha(X)=\sum_{k=0}^{t-1}D_k'\Delta X_k+\essinf_{Q\otimes D\in \hat{\mathcal{M}}\otimes_{q,t}\hat{\mathcal{D}}}R'\left(Q\otimes D,X_{t}+E_{Q}\left[\sum_{k=t+1}^TD_k\Delta X_k \Mid \mathcal{F}_t\right]\right)
		\label{twicediscount}
	\end{equation}
	where $D'$ is an adapted process, and  $R'(\cdot,\cdot):\hat{\mathcal{M}}\otimes_{q,t}\hat{\mathcal{D}}\times \bar{L}^{0}(\mathscr{F}_t) \to \bar{L}^0(\mathscr{F}_t)$ is a maximal risk function.
	Then, such $\alpha$ is a an upper semicontinuous path dependent assessment index.

	The whole process $(D'_0,\ldots,D'_{t-1}, 1,D_{t+1},\ldots,D_T)$ may be interpreted as weighing the past and the future of the cash flows, relative to the present time $t$.

	Depending on the specification of $D'_k$, we get,
	\begin{itemize}
		\item if all $D'_k=0$, a representation of path independent assessment indices.
		\item If all $D'_k=1$, then $\sum_{k=0}^{t-1} \Delta X_k=X_{t-1}$, which means that $\alpha$ depends only on the assessment of the future returns starting at the previous level of wealth $X_{t-1}$.
		\item Changing the parameter $D'_k$ in between, one puts more or less weight on the past evolution of returns.
	\end{itemize}
This kind of past dependence indicates how the past evolution of discounted cumulative cashflows may influence the present assessment of the entire investment process.
On the one hand such an index provide a model that explains optimistic/pessimistic assessment due to recent period of good/bad performances.
One the other hand, such an index may provide some guidelines to the regulator to implement contra-cyclical regulations.
Indeed, they could require $D'$ to be dependent on the past returns, penalising more in the presence of recent overperformance whereas being less demanding in period of recent drawdown.
Such a weighting factor reflecting this feature could take the form
	\begin{equation*}
		D'_k=\exp\left( 0.08-\frac{\Delta X_k}{X_k} \right),
	\end{equation*}
	where $8\%$ were a reasonable annual return for a banking institution.
\end{example}

\begin{remark}\label{rem:imp1}
		Similarly as in Remark~\ref{rem:imp} we observe that the assessment index $\alpha$ considered in this subsection corresponded to the fixed $t$. It would be then appropriate to denote it as, say, $\alpha_t.$ We would then refer to the collection $\set{\alpha_t,\ t=0,1,\ldots,T}$ as to \emph{dynamic path dependent assessment index}.
\end{remark}

\section{Dynamically Consistent Assessment Indices }\label{sec04}
In this section we discuss the key notion of dynamic consistency with regard to assessment indices.
Here, we only focus on the so called strong dynamic consistency for path dependent assessment indices.
For other notions of time consistency we refer to e.g. \citet{AcciaioFollmerPenner2010}, \citet{AcciaioPenner2010} and references therein,  with regard to dynamic risk measures, and we refer to \citet{BCZ2010} and \citet{BiaginiBion-Nadal2012} with regard to acceptability indices.

We consider a dynamic path dependent assessment index $\alpha=\set{\alpha_t, \ t=0,\ldots, T}$ (cf. Remark~\ref{rem:imp1}).

\begin{definition}\label{def:strocons}
	We say that $\alpha$ is \emph{strongly time consistent} if for any $X,Y\in  \mathcal{X}^p_t$ and $t$ such that  $X_{[0,t]}=Y_{[0,t]}$ the following implication is true\footnote{{Recall that $\cX_{t}^p\subset\cX_{t+1}^p$. }}
	\begin{equation*}
		\alpha_{t+1}(X)\geq \alpha_{t+1}(Y) \quad \text{implies}\quad \alpha_t(X)\geq \alpha_t(Y).
	\end{equation*}
\end{definition}

\begin{remark}
One needs to observe that the notion of strong time consistency seems to be inappropriate in the case of scale invariant assessment indices.
Indeed, let $\alpha$ be scale invariant and strongly time consistent. Assume that $X_{[0,t]} Y_{[0,t]}\geq 0$ and $\alpha_{t+1}(X)\geq \alpha_{t+1}(Y)$.
Then, there exists $\lambda\in L^0_{++}(\mathscr{O}^t)$ such that $\lambda X_{[0,t]} = Y_{[0,t]}$, and in view of scale invariance of $\alpha$, we have that $\alpha_t(X)\geq \alpha_t(Y)$.
Thus the condition $X_{[0,t]}=Y_{[0,t]}$ appears to be irrelevant for the strong time consistency in this case, which is unreasonable  from the risk management point   of view.
Consequently, a different notion of time consistency is needed in case of scale invariant assessment indices. One such possible notion was introduced and studied in \cite{BCZ2010}.

Moreover, as shown below, the strong time consistency is strongly related to existence of a certainty equivalent, which fails to exists (see Remark~\ref{rem:CerEqNotGeneral}) for scale invariant assessment indices.
\end{remark}

In order to derive a version of the so called Bellman principle, some additional assumptions have to be done.
We suppose throughout this section that $X_{[t,T]}\mapsto \alpha_t(X_{[0,t-1]}+X_{[t,T]})$ fulfills the assumptions of Proposition \ref{prop:relation:alpha:C:bis} with the boundedness assumption given for $m_1,m_2 \in L^p(\mathscr{F}_t)$ rather than $L^0(\mathscr{F}_t)$.

Let us define a family of functionals $C_t: \mathcal{X}^p_t\to \bar{L}^0(\mathscr{F}_t)$ for $t=0,1,\ldots,T,$ by
\begin{equation*}
	C_{t}(X):=\essinf \Set{m_t \in L^p(\mathscr{F}_t):\alpha_{t}(X_{[0,t-1]}+m_{t}1_{[t,T]})\geq \alpha_{t}(X)}.
\end{equation*}
According to Proposition \ref{prop:relation:alpha:C:bis}, for each $t,$   $X_{[t,T]}\mapsto C_{t}(X_{[0,t-1]}+X_{[t,T]})$ is an upper semicontinuous (path dependent) assessment index taking values into $L^p(\mathscr{F}_t)$ such that
\begin{equation*}
	\alpha_t(X)\geq \alpha_t(Y)\quad \text{if, and only if}\quad C_t(X)\geq C_t(Y).
\end{equation*}
In particular $C_{t}(X_{[0,t-1]}+C_{t}(X)1_{[t,T]})=C_t(X)$.
In addition, the family $\alpha$ is strongly time consistent
if and only if the family  $C:=(C_t)$ is strongly time consistent.

With this at hand, we may formulate the following version of the celebrated Bellman principle.
\begin{proposition}\label{prop:bellman}
	Under the assumptions adopted in this section, if $\alpha$ is strongly time consistent, the corresponding family $C$ of certainty equivalents satisfies, for each $t=0,\ldots,T-1$,
	\begin{equation}\label{eq:strongtimeconst}
		C_{t}\left( X \right)=C_{t}(X_{[0,t]}+C_{t+1}(X)1_{[t+1,T]}),\quad X \in \mathcal{X}^p_t.
	\end{equation}
\end{proposition}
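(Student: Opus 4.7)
My plan is to show that the Bellman identity is a direct consequence of (i) the defining fixed-point property of the certainty equivalent at level $t+1$, namely $C_{t+1}(W_{[0,t]}+C_{t+1}(W)\,1_{[t+1,T]})=C_{t+1}(W)$, and (ii) the fact that strong time consistency of $\alpha$ transfers to $C$ (as observed in the paragraph preceding the proposition, via $\alpha_t(X)\geq\alpha_t(Y)\iff C_t(X)\geq C_t(Y)$). The idea is to build a surrogate process $Y$ that is indistinguishable from $X$ at time $t+1$ in the eyes of $C_{t+1}$, and then propagate this equality back to time $t$.

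Concretely, set
\begin{equation*}
	Y := X_{[0,t]} + C_{t+1}(X)\,1_{[t+1,T]}.
\end{equation*}
First I would check that $Y\in\mathcal{X}^p_t$: by the reinforced boundedness assumption standing in the section, $C_{t+1}(X)\in L^p(\mathscr{F}_{t+1})$, so $C_{t+1}(X)\,1_{[t+1,T]}\in L^{t+1,p}(\mathscr{O}_{[t+1,T]})$, and together with the $[0,t]$-piece of $X$ this lies in $\mathcal{X}^p_t$ (cf.\ \eqref{eq:xtp}). Next, since $1_{[t+1,T]}$ vanishes on $\{0,\ldots,t\}$, we have $Y_{[0,t]}=X_{[0,t]}$, and $Y$ is, from the perspective of time $t+1$, the process whose ``past'' matches $X$ and whose ``future'' is the constant $C_{t+1}(X)$. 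Applying the path-dependent analogue of Proposition~\ref{prop:relation:alpha:C:bis} to the assessment index $X_{[t+1,T]}\mapsto\alpha_{t+1}(X_{[0,t]}+X_{[t+1,T]})$ (with $\kappa=1_{[t+1,T]}$), the fixed-point identity for its certainty equivalent yields
\begin{equation*}
	C_{t+1}(Y) = C_{t+1}\bigl(X_{[0,t]}+C_{t+1}(X)\,1_{[t+1,T]}\bigr) = C_{t+1}(X).
\end{equation*}

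Having $C_{t+1}(X)=C_{t+1}(Y)$ together with $X_{[0,t]}=Y_{[0,t]}$, I now invoke the strong time consistency of $C$ in both directions: the implication applied to $C_{t+1}(X)\geq C_{t+1}(Y)$ gives $C_t(X)\geq C_t(Y)$, while the implication applied to $C_{t+1}(Y)\geq C_{t+1}(X)$ gives $C_t(Y)\geq C_t(X)$. Hence $C_t(X)=C_t(Y)$, which is precisely \eqref{eq:strongtimeconst}. I do not anticipate a substantive obstacle: the argument is essentially bookkeeping, and the only delicate point is confirming that the defining property of the certainty equivalent really applies at time $t+1$ to the particular test process $Y$, which in turn relies on $C_{t+1}(X)$ belonging to $L^p(\mathscr{F}_{t+1})$ — a consequence of the standing $L^p$-boundedness hypothesis of this section.
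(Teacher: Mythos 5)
Your proof is correct and follows essentially the same route as the paper: define $Y:=X_{[0,t]}+C_{t+1}(X)\,1_{[t+1,T]}$, use the fixed-point identity of the certainty equivalent at level $t+1$ to get $C_{t+1}(Y)=C_{t+1}(X)$, check $Y\in\mathcal{X}_t^p$ via the $L^p$-boundedness assumption, and then apply strong time consistency of $C$ (in both directions) together with $Y_{[0,t]}=X_{[0,t]}$ to conclude $C_t(X)=C_t(Y)$. Your version is slightly more explicit about invoking both implications of strong time consistency and correctly notes $C_{t+1}(X)\in L^p(\mathscr{F}_{t+1})$ (the paper writes $\mathscr{F}_t$, which appears to be a typo), but the argument is the same.
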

\begin{proof}
	Since $C_{t+1}$ is a certainty equivalent, it follows that $C_{t+1}(X)=C_{t+1}(X_{[0,t]}+C_{t+1}(X)1_{[t+1,T]})$.
	By means of the boundedness assumption, $C_{t+1}(X)\in L^p(\mathscr{F}_t)$, and so defining $Y=X_{[0,t]}+C_{t+1}(X)1_{[t+1,T]}$, it follows that $Y \in \mathcal{X}_t^p$ and $Y_{[0,t]}=X_{[0,t]}$.
	Thus, the strong time consistency applied to $C$ yields \eqref{eq:strongtimeconst}.
\end{proof}
From now on, we consider certainty equivalent corresponding to assessment indices fulfilling the conditions from Proposition \ref{prop:bellman}.
Note that for $X_{[0,t]}\in L^0(\mathscr{O}^t)$, the function $C_t:L^p_{t+1}(\mathscr{F}_t)\to L^p(\mathscr{F}_t)$, $Y \mapsto C_t(X_{[0,t]}+Y1_{[t+1,T]})$ is an upper semicontinuous assessment index, and we denote by $R_{t,t+1}$ its corresponding minimal risk function, for which it holds
\begin{equation}
	C_{t}(X_{[0,t]}+Y1_{[t+1,T]})=\essinf_{Q\otimes D \in \mathcal{MD}_t^{t+1}}R_{t,t+1}\left( X_{[0,t-1]}, Q\otimes D, X_t+E_{Q}\left[ D\left( Y-X_{t} \right)\Mid \mathscr{F}_t \right] \right),
	\label{}
\end{equation}
where
\begin{equation*}
	\mathcal{MD}_t^{t+1}:=\Set{ Q\otimes D: Q \in \mathcal{M}_t^{t+1}, 0\leq D\leq 1 \text{ and }D\text{ is }\mathscr{F}_t\text{-measurable}},
\end{equation*}
whereby $\mathcal{M}_t^{t+1}$ denotes the set of probability measures $Q$ on $\mathscr{F}_{t+1}$ such that $Q\prec P$ and $Q=P$ on $\mathscr{F}_t$.
As a convention, we set $\mathcal{MD}_T^{T+1}=\set{1}$ since $C_{T}(X)=X_T$.
\begin{theorem}
	If $\alpha=(\alpha_t)$  is a strongly time consistent sequence  of  path dependent assessment indices fulfilling the assumptions of Proposition \ref{prop:relation:alpha:C:bis}, then
	\begin{equation}
		C_{t}\left( X \right)=\essinf_{Q\otimes D \in \mathcal{MD}_t^{t+1} } F_{t}\left( Q\otimes D,X\right);\quad X \in \mathcal{X}_t^p,
		\label{}
	\end{equation}
	where
	\begin{multline}
		F_{t}\left( Q\otimes D,X \right)=\\
		\essinf_{\bar{Q}\otimes \bar{D}\in \mathcal{MD}_{t+1}^{t+2}}R_{t,t+1}\left( X_{[0,t-1]}, Q\otimes D, E_Q\left[ D\Big( F_{t+1}(\bar{Q}\otimes \bar{D},X)-X_t\Big)+ X_t \Mid \mathscr{F}_t\right] \right),
		\label{}
	\end{multline}
	for $t\leq T-1$ and
	\begin{equation}
		F_{T}(Q\otimes D,X)=X_T, \quad Q\otimes D \in \mathcal{MD}_T^{T+1}=\set{1}.
		\label{}
	\end{equation}
\end{theorem}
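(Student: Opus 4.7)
The plan is to argue by backward induction on $t$, starting from $t=T$ and working down. The base case $t=T$ is immediate: by convention $\mathcal{MD}_T^{T+1}=\set{1}$, and since $C_T(X)=X_T=F_T(Q\otimes D,X)$, the essential infimum is trivially attained and the identity holds. For the inductive step, suppose the representation has been established at time $t+1$, that is,
\begin{equation*}
    C_{t+1}(X)=\essinf_{\bar Q\otimes\bar D\in\mathcal{MD}_{t+1}^{t+2}}F_{t+1}(\bar Q\otimes\bar D,X), \quad X\in\mathcal{X}^p_{t+1}.
\end{equation*}

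The key idea is to combine the Bellman principle of Proposition~\ref{prop:bellman} with the one-step representation of $C_t$ via $R_{t,t+1}$. Namely, strong time consistency gives $C_t(X)=C_t(X_{[0,t]}+C_{t+1}(X)1_{[t+1,T]})$, so applying the representation of $C_t$ with $Y:=C_{t+1}(X)\in L^p(\mathscr{F}_{t+1})$ yields
\begin{equation*}
    C_t(X)=\essinf_{Q\otimes D\in\mathcal{MD}_t^{t+1}}R_{t,t+1}\Big(X_{[0,t-1]},Q\otimes D,X_t+E_Q\big[D(C_{t+1}(X)-X_t)\big|\mathscr{F}_t\big]\Big).
\end{equation*}
Substituting the inductive expression for $C_{t+1}(X)$, the whole task reduces to interchanging the inner $\essinf$ over $\bar Q\otimes\bar D$ with the operations $m\mapsto X_t+E_Q[D(m-X_t)\mid\mathscr{F}_t]$ and then with $R_{t,t+1}(X_{[0,t-1]},Q\otimes D,\cdot)$.

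To justify this interchange, I would first show that the family $\{F_{t+1}(\bar Q\otimes\bar D,X):\bar Q\otimes\bar D\in\mathcal{MD}_{t+1}^{t+2}\}$ is downward directed, which follows from the joint $\sigma$-stability of the set $\mathcal{MD}_{t+1}^{t+2}$ together with the joint locality of the minimal risk functions inherited at the previous step (pasting of two candidates $\bar Q^1\otimes\bar D^1$, $\bar Q^2\otimes\bar D^2$ along $A=\{F_{t+1}(\bar Q^1\otimes\bar D^1,X)\leq F_{t+1}(\bar Q^2\otimes\bar D^2,X)\}$ produces a new admissible element realizing the pointwise minimum). Hence one can pick a decreasing sequence $(m_n)$ from this family with $m_n\downarrow C_{t+1}(X)$, and monotone convergence for conditional expectations (applied to the non-negative term $D(m_n-X_t)^-$ controlled by the $\kappa$-boundedness hypothesis on which Proposition~\ref{prop:relation:alpha:C:bis} rests) passes the limit inside $E_Q[\cdot\mid\mathscr{F}_t]$. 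Right-continuity and monotonicity of $R_{t,t+1}$ in the last argument (property \ref{cond:rm01} of Definition~\ref{def:minRiskMeasure}) then transports the limit through $R_{t,t+1}$. Combining both essential infima into a single joint $\essinf$ over $(Q\otimes D,\bar Q\otimes\bar D)\in\mathcal{MD}_t^{t+1}\times\mathcal{MD}_{t+1}^{t+2}$ and recognising the inner part as $F_t(Q\otimes D,X)$ finishes the induction.

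The main obstacle is precisely the interchange of the inner $\essinf$ with the conditional expectation and with $R_{t,t+1}$: this requires care because $D$ need not be bounded away from $0$ and $F_{t+1}(\bar Q\otimes\bar D,X)$ may a priori take values in $\bar L^0$. The downward-directedness argument together with the right-continuity of $R_{t,t+1}$ (so that essentially $R_{t,t+1}(\ldots,\essinf_n m_n)=\essinf_n R_{t,t+1}(\ldots,m_n)$) are the decisive ingredients; all other manipulations are routine consequences of locality of the ingredients and of the Bellman identity \eqref{eq:strongtimeconst}.
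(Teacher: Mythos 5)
Your proof is correct and takes essentially the same route as the paper: backward recursion anchored at $C_T(X)=X_T$, the Bellman identity from Proposition~\ref{prop:bellman} to rewrite $C_t(X)$ as $C_t(X_{[0,t]}+C_{t+1}(X)1_{[t+1,T]})$, the one-step representation via $R_{t,t+1}$, and finally the right-continuity and monotonicity of $R_{t,t+1}$ in its last argument to pull the inner $\essinf$ out. If anything, you are more careful than the paper at the one delicate point: the paper simply invokes ``right-continuity'' to justify interchanging $\essinf_{\bar Q\otimes\bar D}$ with $E_Q[D(\cdot)\mid\mathscr{F}_t]$ and with $R_{t,t+1}$, whereas you spell out the downward-directedness of the family $\{F_{t+1}(\bar Q\otimes\bar D,X)\}$ (via pasting/locality) and the conditional monotone convergence that make this interchange legitimate.
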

\begin{proof}
	Let us prove the theorem for $t=T-1,T-2$; the rest of the proof follows by backward recursion.
	Clearly, $C_{T}(X)=X_T$.
	As for $t=T-1$, since $\mathcal{MD}_{T-1}=\mathcal{MD}_{T-1}^T$ and $R_{T-1}=R_{T-1,T}$, it holds
	\begin{align}
		C_{T-1}\left( X \right)&=\essinf_{Q\otimes D \in \mathcal{MD}_{T-1}^T}R_{T-1,T}\left( X_{[0,T-2]},Q\otimes D, E_{Q}\left[ D\Big( X_T-X_{T-1}\Big)+X_{T-1}\Mid \mathscr{F}_{T-1} \right] \right)\nonumber\\
		&=\essinf_{Q\otimes D \in \mathcal{MD}_{T-1}^T}F_{T-1}\left( Q\otimes D,X \right), \label{eq:bliblobly}
	\end{align}
	where
	\begin{multline*}
		F_{T-1}(Q\otimes D)=\\
		\essinf_{\bar{Q}\otimes \bar{D}\in \mathcal{MD}_{T}^{t+1}}R_{T-1,T}\left( X_{[0,T-1]},Q\otimes D,
		E_{Q}\left[ D\Big( F_{T}(\bar{Q}\otimes \bar{D},X)-X_{T-1}\Big)+X_{T-1}\Mid \mathscr{F}_{T-1} \right] \right),
	\end{multline*}
	since $F_{T}(\bar{Q}\otimes \bar{D},X)=X_T$ for all $\bar{Q}\otimes \bar{D}\in \mathcal{MD}_{T}^{T+1}$.
	
	For $t=T-2$, by time consistency, and since $C_{T-1}(X)$ is $\mathscr{F}_{T-1}$-measurable, we deduce that
	\begin{multline*}
		C_{T-2}\left( X \right)=C_{T-2}\left( X_{[0,T-2]}+C_{T-1}\left( X \right) 1_{[T-1,T]} \right)\\
		=\essinf_{Q\otimes D \in \mathcal{MD}_{T-2}^{T-1}}R_{T-2,T-1}\left( X_{[0,T-3]},Q\otimes D,E_{Q}\left[ D\Big(C_{T-1}(X)-X_{T-2}  \Big)+X_{T-2}\Mid \mathscr{F}_{T-2} \right] \right).
	\end{multline*}
	Since $s \mapsto R_{T-2,T-1}\left( X_{[0,T-3]},Q\otimes D,s \right)$ is right-continuous, by means of \eqref{eq:bliblobly} it follows that
	\begin{align*}
		R&_{T-2,T-1}\left( X_{[0,T-3]},Q\otimes D,E_{Q}\left[ D\Big(C_{T-1}(X)-X_{T-2}  \Big)+X_{T-2}\Mid \mathscr{F}_{T-2} \right] \right)\\
		&=\essinf_{\bar{Q}\otimes \bar{D} \in \mathcal{MD}_{T-1}^T} R_{T-2,T-1}\left( X_{[0,T-3]},Q\otimes D,E_{Q}\left[ D\Big(F_{T-1}(\bar{Q}\otimes \bar{D}, X)-X_{T-2}  \Big)+X_{T-2}\Mid \mathscr{F}_{T-2} \right] \right)\\
		&= F_{T-2}(Q\otimes D,X)
	\end{align*}
	which ends the proof.
\end{proof}
\begin{remark}
	Suppose that $\alpha$ is given by
	\begin{equation*}
		\alpha_t(X)=\sum_{k=1}^{T-1}D^\prime_k \Delta X_k+\beta_{t}(X_{[t,T]})
	\end{equation*}
	as in Example \ref{exep:pathdep}, where $D^\prime=(D_0,\ldots,D_{T-1}^\prime)$ is fixed, and $\beta$ is a strongly time consistent path independent assessment index.
	Then, it follows easily that $\alpha$ itself is a strongly time consistent AI.
\end{remark}

\section{Examples}\label{sec:Examples}

\subsection{Dynamic Gain-to-Loss Ratio}\label{sec:dglr}

We shall discuss here an important example of an assessment index, namely the dynamic Gain-to-Loss Ratio (dGLR).
This index, in fact, provides an example of a dynamic acceptability index, since it is scale invariant. It was introduced in \cite{BCZ2010}, in a slightly different form.
The version of dGLR given in Definition~\ref{def:dglrDiv} below is not strongly time-consistent in the sense of Definition~\eqref{def:strocons}, but it is time-consistent in the sense of \cite{BCZ2010}.

The prototype for the definition below is the classical measure of performance Gain-to-Loss Ratio (GLR):
given an integrable, real-valued random variable $X$,
GLR is defined as $\mathrm{GLR}(X):=\mathbb{E}(X)/\mathbb{E}(X^-)$, if $\mathbb{E}[X]>0$, {$\mathrm{GLR}(0)=+\infty$} and zero otherwise, where $X^-:=\max\{-X,0\}$.

In the rest of this Section we use the set-up of Section \ref{sec03}. In particular,  we fix a $t\in\set{1,\ldots,T}$, we take $\cX=L^{t,p}(\mathscr{O})$, and  we consider  $\cX$ to be an {$L^{0}(\mathscr{O}^t)$-module}. Recall that the cone $\cK$ in this case is given by $\cK=\set{X\in\cX : X\geq 0}$.

Here, any element $X\in \cX$ is considered to be a discounted dividend process, and the next definition gives a relevant formula for dGLR.

\begin{definition}\label{def:dglrDiv}
Here $X$ represents the discounted dividend process.  We define dGLR as follows
\begin{align}\label{eq:dGLRDiv}
dGLR_s(X)
& =
\begin{cases}
{G}(X), & s\geq t \\
  +\infty, & s \leq  t-1,
\end{cases}
\end{align}
where

\begin{align*}
{G}(X) & =
\begin{cases}
  \frac{E[  \sum_{s=t}^T X_s \mid \mathscr{F}_t]}
{E[\left(  \sum_{s=t}^T X_s \right)^- \mid \mathscr{F}_t]}, & \textrm{ on } B^X_1
 \\
{+\infty}, & \textrm{ on } B_2^X  \\
0, & \textrm{ on } B_3^X.
\end{cases}
\end{align*}
with $B^X_1:= \set{E[   \sum_{s=t}^T X_s \mid \mathscr{F}_t] >0}, \ B^X_2:=\esssup\set{A\in\mathscr{F}_t : 1_A\sum_{s=t}^T X_s=0}$, and $B_3^X:=(B_1^X\cup B_2^X)^c$.
\end{definition}
\noindent Note that for any $X\in\cX$, we have that $P(B_1^X \cap B_2^X)=0$, hence $G$ is well defined.

We will show that the above dGLR is monotone, quasi-concave, local, scale invariant and upper-semicontinuous.
Clearly it is enough to show that the properties are satisfied for the function $G$.
In the rest of the section we will use the notation $\widetilde{X} := \sum_{s=t}^T X_s$, for $X\in\cX$.

\noindent \underline{Monotonicity:}
Let $X,Y\in \cX$ be such that $X-Y\in \cK.$ Thus, $\widetilde{X}\geq \widetilde Y.$
We will need to consider all the following cases $\omega\in B_i^X\cap B_j^Y, \ i,j=1,2,3.$
First, note that for $\omega B_i^X\cap B_j^Y,$ with $i=1, j=3; i=j=2; i=2, j=3; i=j=3$, the inequality $G(X)(\omega)\geq G(Y)(\omega)$ is obviously satisfied.

Next, we consider the case $i=j=1$. Note that $B_1^X\cap B_1^Y=B_1^Y$. Consequently, on the set $B_1^Y$, we have $E[\widetilde{X} | \mathscr{F}_t] \geq E[\widetilde{Y} | \mathscr{F}_t] >0$, which immediately implies that $G(X)= E[\widetilde{X} | \mathscr{F}_t] / E[\widetilde{X}^- | \mathscr{F}_t] \geq E[\widetilde{Y} | \mathscr{F}_t]/ E[\widetilde{Y}^- | \mathscr{F}_t] = G(Y)$.

Since $1_{B_1^X\cap B_2^X}\widetilde{X}^-=0$,  we get that $E[\widetilde{X}^-|\mathscr{F}_t] = 0$ on $B_1^X\cap B_2^Y$, that consequently implies that $G(X)=+\infty=G(Y)$ on $B_1^X\cap B_2^Y$.

Also note that $P[B_3^X \cap B_2^Y]=0$. Indeed, for any $C\subset B^Y_2\cap \mathscr{F}_t$, we have that $E[1_C \widetilde{X} | \mathscr{F}_t]\geq0$. If, in addition,  $C\subset B_3^X$, then $E[1_C\widetilde{X} | \mathscr{F}_t] \leq 0$, and thus $1_C\widetilde{X} \equiv 0$, which implies that $C\subset B_2^X$. Since $B_2^X\cap B_3^X =\empty$, we have that $P[C]=0$.
Similarly, one can show that $P[B_i^X\cap B_j^Y] =0$ for $i=2, j=1; i=3,j=1$.
This proves the monotonicity.

\noindent \underline{Quasi-concavity:}
 Let  $X,Y\in \cX$, $\lambda\in L^0(\mathscr{O}^t)$  and $\lambda\in[0,1]$.
 It is enough to show that for any $x\in\bar{L}^0(\mathscr{F}_t)$ such that $G(X)\geq x$ and $G(Y)\geq x$, we have that $G(\lambda X + (1-\lambda)Y)\geq x$.
First, we consider the case $\omega\in B_1^X\cap B_1^Y$.
Then, on $B_1^X\cap B_1^Y$, we have that
\begin{align*}
E[ \widetilde X \mid \mathscr{F}_t ] \geq x E[\widetilde X^- \mid \mathscr{F}_t]\\
E[ \widetilde Y \mid \mathscr{F}_t ] \geq x E[\widetilde Y^- \mid \mathscr{F}_t].
\end{align*}

From here, since, $\lambda_t=\lambda_{t+1}=\ldots=\lambda_T$, and by convexity of $x\to x^-$,  we get
\begin{align*}
xE[(\lambda_t \widetilde{X} + (1-\lambda_t \widetilde{Y}))^- \ | \ \mathscr{F}_t]
& \leq x E[\lambda_t \widetilde{X}^-  + (1-\lambda_t) \widetilde{Y}^-\ | \ \mathscr{F}_t] \\
& \leq    x \lambda_t E[ \widetilde{X}^-  \ | \ \mathscr{F}_t] + x(1-\lambda_t) E[\widetilde Y^- \ | \ \mathscr{F}_t] \\
& \leq   \lambda_t E[ \widetilde X  \ | \ \mathscr{F}_t] + (1-\lambda_t) E[\widetilde Y \ | \ \mathscr{F}_t] \\
& =  E[ \lambda_t \widetilde X  + (1-\lambda_t)\widetilde Y \ | \ \mathscr{F}_t].
\end{align*}
From here, and since $B_1^X\cap B_1^Y \subset B_1^{\lambda X + (1-\lambda)Y}$, we conclude that $G(\lambda X+(1-\lambda)Y) \geq x$.

If $\omega \notin B_1^X\cap B_1^Y$,  then $x(\omega)\leq0$ or $x(\omega)=+\infty$, and hence clearly $G(\lambda X+(1-\lambda)Y)(\omega) \geq x(\omega)$.
Thus, the quasi-concavity of dGLR follows.

\noindent \underline{Locality:}
It is enough to prove that
\begin{equation}\label{eq:dGLRScaleInv}
1_AG(X)=1_AG(1_AX),
\end{equation}
for any $A\in\mathscr{F}_t$ and $X\in\cX$ such that $X=(0,\ldots,0,X_t,\ldots,X_T)$.

Clearly, the equality \eqref{eq:dGLRScaleInv} is satisfied for $\omega\in A^c$.
Since $A\cap B_1^X\ = A\cap B_1^{1_AX}$, by locality of the conditional expectation we conclude that \eqref{eq:dGLRScaleInv} holds true on $A\cap B_1^X$.
Also note that $A\cap B_2^X = A\cap B_2^{1_AX}$, and hence \eqref{eq:dGLRScaleInv} is satisfied on $A\cap B_2^X$. Moreover, the above imply that $A\cap B_3^X = A\cap B_3^{1_AX}$, which consequently shows that \eqref{eq:dGLRScaleInv} holds true on $A\cap B_3^X$. Thus, locality is proved.

\noindent \underline{Scale invariance:}
Note that for any $\lambda\in L^0_{++}(\mathscr{O}^t), X\in \cX^t$,  we have $\lambda X^- = (\lambda X)^-$, $B_i^X=B_i^{\lambda X}$ for $i=1,2,3$, and hence the scale invariance follows immediately.

\noindent \underline{Upper semicontinuity:}
We will show that the upper level sets $\cA^m = \set{X\in \cX : G(X)\geq m}$ are closed for any $m\in L^0(\mathscr{F}_t)$.
If $m\leq 0$, then $\cA^M=\cX$ which is obviously closed. Next, assume that $m >0$.
Then, note that
$$
\cA^m= \Set{ X\in\cX : 1_{B_1^{X}} \frac{E[\widetilde X |\mathscr{F}_t]}{E[\widetilde X ^- |\mathscr{F}_t]}  + 1_{B_2^{X}}\infty \geq m }.
$$
Next, consider the set
$$
\cB^m =\Set{X\in\cX : E[\widetilde X |\mathscr{F}_t] - m E[\widetilde X ^- |\mathscr{F}_t] \geq 0}.
$$
Observe that if $X\in\cB^m$, then $P(B^X_3)=0$. Consequently, we have that $\cA^m=\cB^m$.
The closedness of $\cA^m$ follows from the above equality and from continuity of the function $h(X) = E[\widetilde X |\mathscr{F}_t] - m E[\widetilde X ^- |\mathscr{F}_t] $.

Finally, the case of general $m$ is treated by locality.

\bigskip

\noindent Next, we will provide a robust representation for GLR, using the results from Section~\ref{sec:scaleInv}.
\begin{proposition}
 The unique minimal risk function $R$ in representation \eqref{eq:robrep} of GLR has the following form
 \begin{equation*}
		 R(Z,s)=
\begin{cases}
            +\infty, &\text{ if } s\geq 0\\
			\frac{b_Z}{a_Z}-1, &\text{ if } -\infty < s < 0, \\
-\infty, & \text{ if } s=-\infty.
		  \end{cases}
\end{equation*}
where $a_Z:=\sup\{r\in \mathbb{R}: r\leq Z\}$ and $b_Z:=\inf\{r \in \mathbb{R}: Z\leq r\}$.
\end{proposition}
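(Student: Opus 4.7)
The plan is to apply the scale-invariant reduction from Section \ref{sec:scaleInv} and then evaluate the essential infimum that appears in the middle branch of the resulting formula.

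First, the background work just completed shows that $\mathrm{dGLR}$ is scale invariant, quasi-concave, monotone, local and upper semi-continuous, so Proposition \ref{prop:ScaleInvGeneral} applies. It immediately yields that the unique minimal risk function $R$ has the three-branch structure of \eqref{eq:scaleR}, giving $R(Z,s)=-\infty$ on $\{s=-\infty\}$, $R(Z,s)=+\infty$ on $\{s\geq 0\}$, and
\begin{equation*}
R(Z,s)=\essinf\Set{m\in\bar L^0:Z\in \mathcal{A}^{m,\circ}}\quad\text{on }\{-\infty<s<0\}.
\end{equation*}
Two of the three cases of the claim therefore come for free; the entire task is to evaluate the essential infimum above and identify it with $b_Z/a_Z-1$.

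For this, I use the explicit reformulation of the acceptance sets already exploited in the upper-semicontinuity argument: for $m\geq 0$ one has $\mathcal{A}^m=\{X\in\cX:E[\widetilde X^+\mid\mathscr{F}_t]\geq (m+1)E[\widetilde X^-\mid\mathscr{F}_t]\}$, where $\widetilde X=\sum_{s=t}^T X_s$. I will show both inequalities separately. For the inequality $R(Z,s)\leq b_Z/a_Z-1$, suppose $m\geq b_Z/a_Z-1$, pick $X\in\mathcal{A}^m$, and use the sandwich $a_Z\widetilde X^{\pm}\leq Z\widetilde X^{\pm}\leq b_Z\widetilde X^{\pm}$ together with $(m+1)a_Z\geq b_Z$ to obtain
\begin{equation*}
\langle Z,X\rangle\geq a_Z E[\widetilde X^+\mid \mathscr{F}_t]-b_Z E[\widetilde X^-\mid\mathscr{F}_t]\geq 0,
\end{equation*}
so $Z\in\mathcal{A}^{m,\circ}$ and $R(Z,s)\leq m$. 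For the converse inequality, given any $m<b_Z/a_Z-1$ I will construct a witness $X\in\mathcal{A}^m$ with $\langle Z,X\rangle<0$. Choose $\varepsilon>0$ so small that $(m+1)(a_Z+\varepsilon)<b_Z-\varepsilon$, and take disjoint events $A\subset\{Z\leq a_Z+\varepsilon\}$ and $B\subset\{Z\geq b_Z-\varepsilon\}$ of strictly positive probability (which exist by the definitions of $a_Z$ and $b_Z$). Setting $\widetilde X=\alpha\,\mathbf{1}_A-\beta\,\mathbf{1}_B$ with $\alpha P(A)=(m+1)\beta P(B)$ places $\widetilde X$ on the boundary of $\mathcal{A}^m$, while
\begin{equation*}
E[Z\widetilde X]\leq \alpha(a_Z+\varepsilon)P(A)-\beta(b_Z-\varepsilon)P(B)=\beta P(B)\bigl[(m+1)(a_Z+\varepsilon)-(b_Z-\varepsilon)\bigr]<0,
\end{equation*}
producing the desired $X$ by lifting $\widetilde X$ to a process supported at a single time.

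The main obstacle is reconciling the scalar quantities $a_Z$ and $b_Z$ with the conditional, process-level nature of the dual pairing: $Z$ is a process in $L^{t,q}(\mathscr{O})$, $\mathcal{A}^m$ is defined via $\mathscr{F}_t$-conditional expectations, and $R$ is $\mathscr{F}_t$-valued. I plan to handle this by first using the locality property of $R$ to reduce the question to individual atoms of $\mathscr{F}_t$ (or, equivalently, to argue $\omega$-wise in the conditional essential sense), and then using the fact that $G$ depends on $X$ only through $\widetilde X$ to restrict attention to those $X$ of the form $X=\widetilde X\,\mathbf{1}_{\{s=T\}}$, which reduces the pairing $\langle Z,X\rangle$ to a one-variable expression to which the construction above applies directly. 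The degenerate cases should be checked separately: if $a_Z=b_Z$ the formula reads $0$ and follows from $E[\widetilde X\mid\mathscr{F}_t]\geq 0$ for $X\in\mathcal{A}^m$, $m\geq 0$; if $a_Z=0$ the formula reads $+\infty$ and is obtained by letting $\varepsilon\downarrow 0$ in the lower-bound construction so that $\langle Z,X\rangle<0$ can be arranged for arbitrarily large $m$.
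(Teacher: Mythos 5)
Your proof is correct in substance, but it follows a genuinely different route from the paper's. The paper's argument is very short: it imports the Cherny--Madan characterization of the GLR acceptance sets via supporting kernels, namely $\alpha(X)=\sup\{m\geq 0:\inf_{Q\in\cQ^m}E^Q[X]\geq 0\}$ with $\cQ^m=\{c(1+Y):c\in\bR_+,\,0\leq Y\leq m,\,E[c(1+Y)]=1\}$ (Proposition~4 of the cited reference), and then reads off directly that $\mathcal{A}^{m,\circ}=\{Z:\ c\leq Z\leq c(m+1)\text{ for some }c\in\bR_+\}$, after which $\essinf\{m:Z\in\mathcal{A}^{m,\circ}\}=b_Z/a_Z-1$ is immediate and Proposition~\ref{prop:ScaleInvGeneral} closes the proof. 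You instead prove that polar-cone characterization from first principles: the sandwich $a_Z\widetilde X^+ - b_Z\widetilde X^-\leq Z\widetilde X$, applied to $X\in\mathcal{A}^m=\{E[\widetilde X^+\mid\mathscr{F}_t]\geq(m+1)E[\widetilde X^-\mid\mathscr{F}_t]\}$, gives the inclusion when $m\geq b_Z/a_Z-1$, and your two-event witness $\widetilde X=\alpha\mathbf{1}_A-\beta\mathbf{1}_B$ with $\alpha P(A)=(m+1)\beta P(B)$ and $A\subseteq\{Z\leq a_Z+\varepsilon\}$, $B\subseteq\{Z\geq b_Z-\varepsilon\}$ gives the failure when $m<b_Z/a_Z-1$. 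Your approach is more self-contained (it avoids citing the external supporting-kernel result), at the cost of extra work; the paper's is shorter because it outsources the dual description to \cite{CM09}. Both meet at Proposition~\ref{prop:ScaleInvGeneral}, which you correctly invoke for the three-branch structure.

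Two small cautions. First, the proposition as stated (constant $a_Z,b_Z$) is really about the static GLR, as the paper's proof makes explicit by writing ``Let $\alpha$ be the GLR''; the paper defers the dynamic version to the closing remark ``Analogously one can establish a robust representation for dGLR.'' Your worry about reconciling scalar $a_Z,b_Z$ with the $\mathscr{F}_t$-conditional pairing is therefore not an obstacle here but only in the deferred dGLR analogue, and you should not need the locality/atom reduction for the statement as written; you could simply take $t=0$ and $\mathscr{F}_t$ trivial. Second, your handling of the case $a_Z=0,\ b_Z=0$ (i.e., $Z=0$) needs care: there $b_Z/a_Z$ is indeterminate and the formula in Proposition~\ref{prop:ScaleInvGeneral} itself already assigns $R(0,s)=-\infty$ on $\{s<0\}$, so that case should be excluded or treated separately rather than folded into the ``$a_Z=0$ yields $+\infty$'' remark.
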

\begin{proof}
Let $\alpha$ be the GLR. Then, from \cite{CM09}, we know that
$$
\alpha(X) =\sup\Set{m \geq 0 \, : \,  \inf\limits_{Q\in\cQ^m} E^Q[X]\geq 0},
$$
where the system of supporting kernels $\set{\cQ^m}_{m\in\bR_+}$ for $\alpha$ is given explicitly by (see \cite[Proposition~4]{CM09})
$$
\cQ^m =\Set{c(1+Y) \mid c\in\bR_+, \ 0\leq Y \leq m, \ E[c(1+Y)]=1}, \quad m\in\bR_+.
$$
Using this, it can be verified that
\begin{align*}
\mathcal{A}^{m,\circ} =  \Set{ Z \in L^\infty \Mid  c\leq Z\leq c(m+1) \ \textrm{for some} \ c\in\bR_+}.
\end{align*}
Clearly, $\inf\set{m\in\bar{\bR} : Z\in\mathcal{A}^{m,\circ}} = b_Z/a_Z-1$, and so, using Proposition~\ref{prop:ScaleInvGeneral} we conclude the proof.
\end{proof}
Analogously one can establish a robust representation for dGLR.

\subsection{Optimized Certainty Equivalent}

We sketch here a conditional version of classical version of the optimized certainty equivalent. The detailed study can be done along the lines of the study the we conducted above for dGLR.

	The \emph{optimized certainty equivalent}, see \citep{ben-tal02,ben-tal01}, is an assessment index given by
	\begin{equation}\label{eq:entropic}
		OCE_t(X)=\esssup_{m \in L^\infty(\mathscr{F}_t)}\Set{m+E_{\tilde{P}}\left[ u_t\left( X_{[t,T]}-m1_{[t,T]} \right)\Mid \mathscr{F}_t \right]} ,
	\end{equation}
	where $u_t:\mathbb{R} \to \mathbb{R}\cup \set{-\infty}$ is a concave utility function\footnote{One may assume that $u_t$ can be made $\mathscr{F}_t$-state dependent. This however only a technical step.} such that $u(0)=0$ and $1 \in \partial u(0)$.
	Following the same argumentation as in \citep{ben-tal01,antonis2012}, it follows that the robust representation is of the form
	\begin{equation*}
		R\left( Q\otimes D, m \right)=m+E_{P}\left[ \sum_{k=t+1}^T \varphi_t\left( \frac{M_k \gamma_k}{M_t \mu_k} \right) \Mid \mathscr{F}_t \right], \quad Q\otimes D \in \mathcal{M}\otimes_{t}\mathcal{D},
	\end{equation*}
	where $\varphi_t$ is the convex conjugate of $-u(-\cdot)$, $M$ is the density process of $Q$ and $Q\otimes \gamma=Q\otimes D$ by means of relation \eqref{eq:onetoone:relation02}.

	As for the dynamic, of the OCE, if $u_t(x)=(1-e^{-\gamma x})/\gamma$, for a fixed $\gamma$, then the $OCE$ is the entropy and is time consistent, see \cite{AcciaioFollmerPenner2010}.
	Otherwise, being a risk measure it is a certainty equivalent, henceforth, a recursive definition along the line of Proposition \ref{prop:bellman} yields a strong time consistent assessment index.

\subsection{Weighted $V@R$ Acceptability Indices}

	Similarly as in the previous subsection we present here just a sketch of possible conditional version of weighted $V@R$ acceptability indices .

Following \citep{ChernyMadan2009}, we define $[0,1](\mathscr{F}_t)=\set{\alpha \in L^0(\mathscr{F}_t):0\leq \alpha\leq 1}$.
	This set is clearly $\sigma$-stable.
	We consider a family of functions $\Phi_m:[0,1](\mathscr{F}_t)\to [0,1](\mathscr{F}_t)$, $m\in L^0_{+}(\mathscr{F}_t)$ being
	\begin{itemize}
		\item jointly local:
			\begin{equation*}
				1_{A}\Phi_m(\alpha)+1_{A^c}\Phi_{n}(\beta)=\Phi_{1_A m+1_{A^c}n}(1_{A}\alpha+1_{A^c}\beta),
			\end{equation*}
			for every $A \in \mathscr{F}_{t}$, $m,n \in L^0_+(\mathscr{F}_t)$ and $\alpha,\beta \in [0,1](\mathscr{F}_t)$;
		\item concave: $\alpha \mapsto \Phi_{m}(\alpha)$ is concave;
		\item increasing: $\Phi_m \leq \Phi_{n}$, for every $m\leq n \in L^0_+(\mathscr{F}_t)$;
		\item normalized: $\Phi_m(0)=0$ and $\Phi_m(1)=1$, for every $m \in L^0_+(\mathscr{F}_t)$.
	\end{itemize}
	Such a family is called a \emph{conditional family of concave distortions}.
	Note that being conditionally concave and local, it follows that $\Phi_m$ is continuous.
	We define the Weighted $V@R$ acceptability index as follows
	\begin{equation}
		AIW(X):=\esssup\Set{m \in L^0_+(\mathscr{F}_t):\int_{-\infty}^{\infty}x d \Psi_m\left( F_{(X_{[t+1,T]}\mid \mathscr{F}_t)}(x) \right)\geq 0},
		\label{eq:AIW}
	\end{equation}
	where $F_{(X_{[t+1,T]}\mid \mathscr{F}_t)}(x)=\tilde{P}[X_{[t+1,T]}\leq x \mid \mathscr{F}_t]$ is the regular conditional distribution under $\tilde{P}$ of $X_{[t+1,T]}$, the integral being taken $\omega$-wise.
	Once again, following the argumentation in \citep{ChernyMadan2009}, it follows that
	\begin{equation*}
		\mathcal{A}^{m,\circ}_1:=\Set{Q\otimes D \in \mathcal{M}\otimes_{t} \mathcal{D}:E\left[ \left( -\sum_{k=t+1}^T M_k \Delta D_{k+1}-\mu_k\beta\right)\Mid \mathscr{F}_t\right]\leq \phi_m(\beta),\text{ for all }\beta \in L^0_{+}(\mathscr{F}_t)},
	\end{equation*}
	where $M$ is the density process of $Q$, $\phi_m(\beta):=\esssup_{\alpha \in [0,1](\mathscr{F}_t)}\set{\Phi_m(\alpha)-\alpha\beta}$, $m\in L^0_{+}(\mathscr{F}_t)$ and $\beta \in L^0_{+}(\mathscr{F}_t)$ is the convex conjugate of $\Phi_m$.\footnote{Clearly, $(\phi_m)$ is a jointly local family of convex increasing functions.}
	With this formulation, one may define $AIMAX$, $AIMAXMIN$, $AIMINMAX$.

\begin{appendix}
\section{Appendix}\label{appendix}
\subsection{Standard Results on $L^0$-Convex Analysis}

Notations and settings are from the Preliminaries \ref{sec01}.
Let $\mathcal{Y}$ be a set of $L^0$-linear functionals from $\mathcal{X}$ to $L^0$.
We denote by $L^0$-$\sigma\left( \mathcal{X},\mathcal{Y} \right)$ the smallest topology for which the mappings
\begin{equation*}
	X \mapsto Z\left( X \right),\quad X \in \mathcal{X}
\end{equation*}
are $L^0$-continuous for any $Z\in\mathcal{Y}$.
\begin{proposition}\label{prop:duality01}
	Let $ \mathcal{X}$ be a locally $L^0$-convex topological $L^0$-module and let $\mathcal{Y}$ be a set of $L^0$-linear functionals from $\mathcal{X}$ to $L^0$.
	Then, $\mathcal{X}$ equipped with the $L^0$-$\sigma\left( \mathcal{X},\mathcal{Y} \right)$-topology is a locally $L^0$-convex topological $L^0$-module.
\end{proposition}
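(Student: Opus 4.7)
The plan is to mimic the classical proof that, for a locally convex TVS, the weak topology induced by any family of linear functionals is itself locally convex, while taking care of the $L^0$-module subtleties as developed in \cite{FilipovicKupperVogelpoth2009,KupperVogelpoth2009}. The key device is the family of $L^0$-seminorms $p_Z:\mathcal{X}\to L^0_+$ defined by $p_Z(X):=\abs{Z(X)}$ for $Z\in\mathcal{Y}$. Since $Z$ is $L^0$-linear, one immediately verifies $p_Z(\lambda X)=\abs{\lambda}p_Z(X)$ for every $\lambda\in L^0$, and $p_Z(X+Y)\leq p_Z(X)+p_Z(Y)$, so that each $p_Z$ is a genuine $L^0$-seminorm in the sense of \cite{FilipovicKupperVogelpoth2009}.

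Next, I would show that the topology $L^0\text{-}\sigma(\mathcal{X},\mathcal{Y})$ admits the basis of neighborhoods of $0$ given by
\begin{equation*}
U_{Z_1,\dots,Z_n;\varepsilon}:=\Set{X\in\mathcal{X}\, : \, p_{Z_i}(X)\leq \varepsilon,\ i=1,\dots,n},
\end{equation*}
for finitely many $Z_1,\dots,Z_n\in\mathcal{Y}$ and $\varepsilon\in L^0_{++}$. This follows because $L^0\text{-}\sigma(\mathcal{X},\mathcal{Y})$ is the coarsest topology making every $Z\in\mathcal{Y}$ continuous into $L^0$, and a sub-basis of neighborhoods of $0$ in $L^0$ is given by the balls $B_\varepsilon(0)=\set{m\in L^0:\abs{m}\leq\varepsilon}$. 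Taking preimages under the $Z_i$'s and intersecting finitely many yields exactly the sets $U_{Z_1,\dots,Z_n;\varepsilon}$.

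Local $L^0$-convexity then reduces to verifying that each $U_{Z_1,\dots,Z_n;\varepsilon}$ is $L^0$-convex, which is a direct computation: for $\lambda\in L^0$ with $0\leq\lambda\leq 1$ and $X,Y\in U_{Z_1,\dots,Z_n;\varepsilon}$,
\begin{equation*}
p_{Z_i}(\lambda X+(1-\lambda)Y)=\abs{\lambda Z_i(X)+(1-\lambda)Z_i(Y)}\leq \lambda\abs{Z_i(X)}+(1-\lambda)\abs{Z_i(Y)}\leq\varepsilon,
\end{equation*}
for each $i$, using $L^0$-linearity of $Z_i$ and the fact that $\lambda,1-\lambda\in L^0_+$. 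To complete the proof, I would verify that addition and $L^0$-scalar multiplication remain continuous. Continuity of addition is straightforward via the identity $p_Z((X_1+X_2)-(Y_1+Y_2))\leq p_Z(X_1-Y_1)+p_Z(X_2-Y_2)$, which shows $U_{Z_1,\dots,Z_n;\varepsilon/2}+U_{Z_1,\dots,Z_n;\varepsilon/2}\subseteq U_{Z_1,\dots,Z_n;\varepsilon}$.

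The subtle step, and the main obstacle I anticipate, is the joint continuity of scalar multiplication $L^0\times\mathcal{X}\to\mathcal{X}$, $(\lambda,X)\mapsto\lambda X$, at an arbitrary point $(\lambda_0,X_0)$. Using $L^0$-linearity of each $Z\in\mathcal{Y}$, one has
\begin{equation*}
p_Z(\lambda X-\lambda_0 X_0)\leq \abs{\lambda-\lambda_0}\,p_Z(X_0)+\abs{\lambda_0}\,p_Z(X-X_0)+\abs{\lambda-\lambda_0}\,p_Z(X-X_0),
\end{equation*}
and each of the three terms has to be controlled by a neighborhood condition. The delicate point, specific to the $L^0$-module setting, is that $p_Z(X_0)$ is a random variable, not a constant, so one cannot simply pick a deterministic $\delta>0$; instead one selects $\delta\in L^0_{++}$ of the form $\delta:=\varepsilon/(3(1\vee p_Z(X_0))(1\vee\abs{\lambda_0}))$ (handled index by index) to absorb the first two terms, and then argues that on the joint neighborhood $\set{\abs{\lambda-\lambda_0}\leq\delta}\times(X_0+U_{Z;\delta})$ the cross term is dominated by $\varepsilon/3$ as well. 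This pointwise/locality-aware choice of $\delta\in L^0_{++}$ is exactly the type of step that becomes routine once one has internalized the framework of \cite{FilipovicKupperVogelpoth2009}, and concludes the proof.
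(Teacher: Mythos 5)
Your proof is correct in substance and rests on the same starting observation as the paper: each $|Z(\cdot)|$ for $Z\in\mathcal{Y}$ is an $L^0$-seminorm, and the $L^0$-$\sigma(\mathcal{X},\mathcal{Y})$-topology is precisely the topology generated by this family of seminorms. The paper, however, stops there and simply invokes \citep[Theorem~2.4]{FilipovicKupperVogelpoth2009}, which characterizes locally $L^0$-convex topological $L^0$-modules as exactly those whose topology is induced by a family of $L^0$-seminorms. You instead reprove the essential content of that theorem from scratch: $L^0$-convexity of the basic neighborhoods, continuity of addition, and joint continuity of the $L^0$-scalar multiplication. So the difference is one of economy versus self-containment — the paper's route is the two-line citation, yours is the elementary re-derivation. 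Both are valid; yours has the pedagogical advantage of exposing the genuinely nontrivial step (joint continuity of scalar action with a random $\delta\in L^0_{++}$), which the citation hides.

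One small technical point in your scalar-multiplication estimate: with your choice $\delta=\varepsilon/\bigl(3(1\vee p_Z(X_0))(1\vee\abs{\lambda_0})\bigr)$, the first two terms are indeed bounded by $\varepsilon/3$, but the cross term is only bounded by $\delta^2$, and $\delta^2\leq\varepsilon/3$ is not automatic since $\varepsilon$ is a random variable that can exceed $1$ on part of $\Omega$. The fix is routine in this setting — replace $\delta$ by $\delta\wedge 1$ (or $\delta\wedge\sqrt{\varepsilon/3}$), so that $\delta^2\leq\delta\leq\varepsilon/3$. This does not affect the validity of your argument; it is the kind of pointwise adjustment you correctly anticipate as characteristic of the $L^0$-module framework, just slightly under-specified in your sketch.
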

\begin{proof}
	By definition, the $L^0$-$\sigma\left( \mathcal{X},\mathcal{Y} \right)$-topology on $\mathcal{X}$ is generated by the following family of neighborhoods of $0$
	\begin{equation*}
		U_{\mathcal{A},\varepsilon}:=\Set{X \in \mathcal{X} : \sup_{Z \in \mathcal{A}} \abs{Z\left( X \right)}\leq \varepsilon},
	\end{equation*}
	where $\mathcal{A}$ is a finite subset of $\mathcal{Y}$ and $\varepsilon \in L^0_{++}$.
	Since $\abs{Z(.)}$ is an $L^0$-seminorm\footnote{An $L^0$-semi norm is a functional $p:\mathcal{E}\to L^0_{+}$ such
that $p\left( mX \right)=\abs{m}p\left( X \right)$ for any $m \in L^0$ and $X \in \mathcal{E}$ and $p\left( X+Y \right)\leq
p\left( X \right)+p\left( Y \right)$ for any $X,Y \in\mathcal{E}$.}, we apply \citep[Theorem 2.4]{FilipovicKupperVogelpoth2009}.
\end{proof}
Provided that $\mathcal{Y}$ is itself an $L^0$-module,  $\mathcal{X}$ also defines a set of $L^0$-linear functionals from
$\mathcal{Y}$ to $L^0$ and therefore $\left( \mathcal{Y},L^0\text{-}\sigma\left(\mathcal{Y},\mathcal{X} \right)\right)$ is again a locally $L^0$-convex topological $L^0$-module.
Furthermore, the $L^0$-dual space of $\left( \mathcal{X},L^0\text{-}\sigma\left( \mathcal{X},\mathcal{Y} \right) \right)$ is exactly $\mathcal{Y}$.
We finally say that $\mathcal{X}$ is $L^0$-reflexive if $\mathcal{X}^{\ast\ast}=\mathcal{X}$, in which $\mathcal{X}^\ast$ is equipped with the $L^0$-$\sigma\left( \mathcal{X}^\ast,\mathcal{X} \right)$-topology.
On $\mathcal{X}^\ast\times \mathcal{X}$ we always consider the dual pairing $\langle X^\ast,X \rangle:=X^\ast\left( X \right)$.

A local function $F:\mathcal{X}\to \bar{L}^0$ is said to be
\begin{itemize}
	\item \emph{upper semicontinuous} if the upper level sets given by $\set{X \in \mathcal{X} : F(X)\geq m}$ are closed for all $m \in \bar{L}^0$;
	\item \emph{proper} if $F<\infty$ and there exists $X \in \mathcal{X}$ such that $F(X)>-\infty$.	
\end{itemize}
The concave conjugate $F^\star:\mathcal{X}^\ast \to \bar{L}^0$ of $F$ is given by
\begin{equation*}
	F^\star\left( X^\ast \right):=\essinf_{X \in \mathcal{X}}\Set{\langle X^\ast,X\rangle -F\left( X \right)},\quad X^\ast \in \mathcal{X}^\ast.
\end{equation*}
The  hypograph $hypo \left( F \right)$ of $F$ is defined as
\begin{align}
		hypo\left( F \right)&:=\Set{(X,m) \in \mathcal{X}\times L^0: F\left( X \right)\geq m}.	 \label{}
\end{align}

From now on we consider $\mathcal{X}$ to be a $\sigma$-stable, locally $L^0$-convex topological $L^0$-module
such that the set of all neighborhoods of zero is $\sigma$-stable.
From the theory of $L^0$-modules in  \cite{FilipovicKupperVogelpoth2009} we know the following.
\begin{proposition}\label{prop:FM}
	Let $F:\mathcal{X}\to \bar{L}^0$ be a proper function, then
	\begin{enumerate}
		\item $F$ is $L^0$-concave if and only if $hypo\left( F \right)$ is $L^0$-convex and $F$ is $L^0$-local.
		\item $F^\star$ is $L^0$-concave and $L^0$-upper semicontinuous for any $F$.
		\item If $F$ is an $L^0$-proper concave upper semicontinuous function then $F^{\star\star}=F$.
			\end{enumerate}
\end{proposition}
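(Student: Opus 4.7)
The plan is to treat the three assertions in order, each time reducing to a classical convex-analytic statement and adapting it to the $L^0$-module setting via the locality and $\sigma$-stability machinery of \cite{FilipovicKupperVogelpoth2009}.

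For (1), the forward direction is a direct computation: if $F$ is $L^0$-concave and $(X,m),(Y,n)\in\hypo(F)$, then for $\lambda\in L^0$ with $0\le\lambda\le 1$,
\begin{equation*}
F(\lambda X+(1-\lambda)Y)\ge\lambda F(X)+(1-\lambda)F(Y)\ge\lambda m+(1-\lambda)n,
\end{equation*}
so the convex combination lies in $\hypo(F)$. To extract locality, fix $A\in\mathscr{G}$; concavity with $\lambda=1_A$ gives $F(1_A X+1_{A^c}Y)\ge 1_A F(X)+1_{A^c}F(Y)$, while writing $X=1_A(1_A X+1_{A^c}Y)+1_{A^c}(1_{A^c}X+1_A Y)$ and applying concavity again yields $1_A F(X)\ge 1_A F(1_A X+1_{A^c}Y)$, and symmetrically $1_{A^c}F(Y)\ge 1_{A^c}F(1_A X+1_{A^c}Y)$, producing the reverse bound. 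The converse direction is obtained by lifting $(X,F(X))$ and $(Y,F(Y))$ into $\hypo(F)$, taking the $\lambda$-combination there, and reading off concavity; infinite values of $F$ are absorbed through locality and the convention $\infty-\infty=-\infty$.

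For (2), fix $X\in\mathcal{X}$ and observe that $X^\ast\mapsto\langle X^\ast,X\rangle-F(X)$ is $L^0$-affine and continuous with respect to $L^0$-$\sigma(\mathcal{X}^\ast,\mathcal{X})$. The essential infimum of a family of $L^0$-concave functions is $L^0$-concave, and upper semicontinuity follows from the identity
\begin{equation*}
\Set{X^\ast\in\mathcal{X}^\ast:F^\star(X^\ast)\ge m}=\bigcap_{X\in\mathcal{X}}\Set{X^\ast\in\mathcal{X}^\ast:\langle X^\ast,X\rangle-F(X)\ge m},
\end{equation*}
which exhibits the upper level sets of $F^\star$ as intersections of closed half-spaces. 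Joint locality of the family in $X$ transfers to locality of $F^\star$.

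For (3), the inequality $F\le F^{\star\star}$ follows directly by substituting $Y=X$ in the defining essinf of $F^\star$. For the reverse, fix $X$ and $m\in L^0$ with $m>F(X)$ on some $A\in\mathscr{G}_+$. By~(1), $\hypo(F)$ is $L^0$-convex; by locality it is $\sigma$-stable; by upper semicontinuity it is closed; hence $(X,m)\notin\hypo(F)$ on $A$. The conditional Hahn--Banach separation theorem on $\mathcal{X}\times L^0$ from \cite{FilipovicKupperVogelpoth2009} then furnishes a continuous $L^0$-linear functional $(X^\ast,c)\in\mathcal{X}^\ast\times L^0$ strictly separating $(X,m)$ from $\hypo(F)$ on $A$. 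One verifies $c>0$ on $A$ by exploiting the fact that $\hypo(F)$ is stable under downward shifts of the second coordinate (which would otherwise force the separating functional to be unbounded below on $\hypo(F)$), normalizes $c=1$, and reads off $F^\star(X^\ast)\ge\langle X^\ast,X\rangle-m$, whence $F^{\star\star}(X)\le m$ on $A$. Letting $m$ decrease to $F(X)$ along a countable sequence and gluing via $\sigma$-stability yields $F^{\star\star}(X)\le F(X)$. The main obstacle lies in~(3), specifically in correctly invoking the conditional Hahn--Banach separation theorem for $L^0$-modules and in verifying strict positivity of the scalar component $c$ on the separation event $A$; both steps require careful handling of locality and of the $\sigma$-stable structure of the hypograph, together with the extended-real conventions when $F$ attains infinite values.
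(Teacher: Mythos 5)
The paper does not prove this proposition; it is introduced with the sentence ``From the theory of $L^0$-modules in \cite{FilipovicKupperVogelpoth2009} we know the following,'' and item (3) is invoked later as ``the conditional Fenchel--Moreau Theorem (cf.\ [Theorem~3.8, \cite{FilipovicKupperVogelpoth2009}]).'' So there is no internal proof to compare your argument against; you are reconstructing the proof from the cited reference, and your route (hypograph manipulation for (1), intersection of conditional half-spaces for (2), conditional Hahn--Banach separation for (3)) is indeed the standard one.

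Two places in your sketch deserve closer attention. In the converse direction of (1), $(X,F(X))$ lies in $\hypo(F)\subseteq\mathcal{X}\times L^0$ only where $F(X)\in L^0$; to cover $\{F(X)=+\infty\}$ you must take $m\to\infty$ in the second coordinate and glue by locality, and on $\{F(X)=-\infty\}$ or $\{F(Y)=-\infty\}$ the right-hand side of the concavity inequality is already $-\infty$ under the convention $\infty-\infty=-\infty$ together with $0\cdot\pm\infty=0$; your one-line remark about ``absorbing infinite values'' stands in for all of this. More substantively, in (3) your justification for $c>0$ --- that downward shifts in the second coordinate would otherwise make the separating functional unbounded on $\hypo(F)$ --- only excludes $c<0$ on the separation event; it does not rule out $c=0$ on a positive-measure subevent. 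To exclude $c=0$ one must argue that on $\{F(X)>-\infty\}$ a purely spatial $X^\ast$ would have to strictly separate $X$ from the effective domain of $F$, which by $\sigma$-stability and properness contains an element agreeing with $X$ on that event, yielding a contradiction; and on $\{F(X)=-\infty\}$ one must separately show $F^{\star\star}(X)=-\infty$ there. You flag the $c>0$ step as delicate, which is right, but the specific mechanism is not yet there. This is exactly the kind of event-by-event localization the paper does carry out explicitly in its own proofs of Propositions~\ref{prop:inverse} and~\ref{thm:01}.
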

\begin{definition}
 For a non-empty family $(A_i)_{i\in I} \in \mathscr{G}$  the essential supremum  $\esssup\{A_i:i\in I\}$ is defined to be the
element $B\in \mathscr{G}$ with
\begin{enumerate}
 \item $A_i \subseteq B$ for all $i$.
\item For all $C\in \mathscr{G}$ fulfilling $1.$ and $C\subseteq B$ holds $P[B\setminus C]=0 $.
\end{enumerate}
Further we define $\esssup\{(\emptyset)\}=\emptyset$.
\end{definition}
The next lemma was proven in \citep[Lemma 2.9]{FilipovicKupperVogelpoth2009}.
\begin{lemma}
 Every non-empty family $\mathscr{A}=(A_i)_{i\in I}$ has an essential supremum.
If for all $i,j$ also $A_i\cup A_j \in \mathscr{A}$, then there exists an increasing sequence $(A^n)$ in $\mathscr{A}$ such that
$\esssup(\mathscr{A})=\bigcup_{n \in \mathbb{N}}A^n$.
\end{lemma}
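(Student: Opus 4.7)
The plan is to prove existence by a standard ``maximal probability'' argument and then deduce the countable increasing representation from the hypothesis that $\mathscr{A}$ is closed under finite unions.

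For existence, I would first reduce to countable subfamilies: for any countable $J\subseteq I$, the union $B_J:=\bigcup_{i\in J} A_i$ lies in $\mathscr{G}$, so the quantity $P[B_J]\in[0,1]$ is well-defined. Let
\begin{equation*}
	c:=\sup\Set{P[B_J]\, : \, J\subseteq I\text{ countable}}\le 1.
\end{equation*}
Choose countable sets $J_n\subseteq I$ with $P[B_{J_n}]\uparrow c$, and set $J^\ast:=\bigcup_n J_n$, which is still countable. Then $B:=B_{J^\ast}\in\mathscr{G}$ satisfies $P[B]=c$ by monotone convergence. I would then verify that $B=\esssup\mathscr{A}$: for any $i\in I$, applying the supremum definition to $J^\ast\cup\set{i}$ forces $P[B\cup A_i]=c=P[B]$, so $A_i\subseteq B$ up to a null set, giving property (1). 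For property (2), if $C\in\mathscr{G}$ satisfies $A_i\subseteq C$ for all $i$ and $C\subseteq B$, then in particular $B_{J^\ast}\subseteq C\subseteq B=B_{J^\ast}$, so $P[B\setminus C]=0$.

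For the second statement, assume the additional closure under pairwise unions. Enumerate $J^\ast=\set{i_1,i_2,\ldots}$ and define
\begin{equation*}
	A^n:=A_{i_1}\cup A_{i_2}\cup\cdots\cup A_{i_n},\quad n\in\bN.
\end{equation*}
By a straightforward induction, the hypothesis yields $A^n\in\mathscr{A}$ for every $n$, and by construction $(A^n)$ is increasing with $\bigcup_n A^n=B_{J^\ast}=\esssup\mathscr{A}$, which is the desired representation.

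The only genuinely delicate point is the reduction from arbitrary index sets $I$ to countable subfamilies in the existence step: one must know that the supremum of $P[B_J]$ over countable $J$ is actually attained by some single countable $J^\ast$, which is what the diagonal/union argument above gives. Everything else is bookkeeping with null sets and, in the second part, a one-line induction using the hypothesis $A_i\cup A_j\in\mathscr{A}$.
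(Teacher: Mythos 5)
Your proof is correct; the paper itself gives no proof of this lemma but simply cites \citep[Lemma 2.9]{FilipovicKupperVogelpoth2009}, and your maximal-probability argument (taking the supremum of $P[B_J]$ over countable subfamilies, realizing it by a countable $J^\ast$, and checking the two defining properties up to null sets) is exactly the standard proof given there. The second part is likewise fine, up to the trivial remark that if $J^\ast$ is finite one lets the sequence $(A^n)$ become eventually constant.
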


\subsection{Conditional Inverse of Increasing Functions}\label{appendix:01}
In this section, for $n,m \in \bar{L}^0$, we use the convention that $n<m$ if $P[n<m]=1$.

For a local, increasing\footnote{That is, $F(m)\geq F(m^\prime)$ whenever $m\geq m^\prime$.} function $F:\bar{L}^0\to \bar{L}^0$ we define its left- and right-continuous version as
\begin{align}
	F^-(m)&:=1_{\widetilde A_m}\esssup\Set{F(n):n \in \bar{L}^0\text{ and }n<m \text{ on }\widetilde A_m}-1_{\widetilde A_m^c}\infty \label{eq:app01},\\
	F^+(m)&:=1_{\widetilde B_m}\essinf\Set{F(n):n \in \bar{L}^0\text{ and }n>m \text{ on }\widetilde B_m}+1_{\widetilde B_m^c}\infty \label{eq:app02},
\end{align}
where $m\in \bar{L}^0$ and $\widetilde A_m=\set{m>-\infty}$,  and $\widetilde B_m=\set{m<\infty}$.
Due to locality and the definition of $F^\pm$ it holds that
\begin{equation}
	F^+(m)\leq F^-(m^\prime),\quad \text{ for } m,m^\prime \in \bar{L}^0\text{ with }m<m^\prime.
	\label{eq:fundamental}
\end{equation}
\begin{definition}\label{defn:inverse}
	For a local, increasing function $F:\bar{L}^0 \to \bar{L}^0$, a local, increasing function $G:\bar{L}^0\to \bar{L}^0$ is called a \emph{conditional inverse} of $F$ if
	\begin{equation}\label{eq:inverse}
		\begin{cases}
			F^-\left(G(s)  \right)\leq s\leq F^+\left( G\left( s \right) \right)&,\text{on } \Set{F(-\infty) <s < F(\infty)},\\
			G(s)=-\infty&,\text{on }\Set{s < F(-\infty)},\\
			G(s)=\infty&,\text{on }\Set{F(\infty)< s},
		\end{cases}
	\end{equation}
	for every $s \in \bar{L}^0$.
\end{definition}
\begin{remark}
	The definition of a conditional inverse does not postulate any condition as for the values of $G$ on the boundary of the range of $F$.
	Being increasing, it simply means that $-\infty \leq G(F(-\infty))\leq G^+(F(-\infty))$ and $G^-(F(\infty))\leq G(F(\infty))\leq +\infty$.
	We can not require $G(F(-\infty))=-\infty$ or $G(F(-\infty))=G^+(F(-\infty))$ for instance.
	This is important since by the definition of the left- and right-inverse below, the proposition \ref{prop:leftrightinverse} states that both $F^{(-1,l)}$ and $F^{(-1,r)}$ are inverses of $F$.
	However, it may well happen that $F^{(-1,l)}(F(-\infty))=-\infty< F^{(-1,r)}(F(-\infty))$ as well as $F^{(-1,l)}(F(\infty))<F^{(-1,r)}(F(\infty))=+\infty$ and a convention on the values of a conditional inverse on the boundaries of $F$ would imply that neither $F^{(-1,l)}$ and $F^{(-1,r)}$ are conditional inverse.
\end{remark}
We define the \emph{conditional left-} and \emph{right-inverse} of $F$ as
\begin{align}
	F^{(-1,l)}\left( s \right):=&1_{A_s}\essinf \set{m\in \bar{L}^0:1_{A_s}F\left( m \right)\geq 1_{A_s}s}+1_{A_s^c}\infty \label{eq:leftinverse}\\
	=&1_{A_s}\esssup \set{m\in \bar{L}^0:F\left( m \right)< s\text{ on }A_s}+1_{A_s^c}\infty \nonumber ,\\
	F^{(-1,r)}\left( s \right):=&1_{B_{s}}\esssup \set{m\in \bar{L}^0:1_{B_s}F\left( m \right)\leq 1_{B_s}s}-1_{B_s^c}\infty\label{eq:rightinverse}\\
	=&1_{B_{s}}\essinf \set{m\in \bar{L}^0:F\left( m \right)> s\text{ on }B_s}-1_{B_s^c}\infty\nonumber,
\end{align}
for $s \in \bar{L}^0$,
where\footnote{Note that $A_{s}:=\Set{\esssup_{m \in \bar{L}^0}F(m) < s}^c$ and $ B_s:=\Set{\essinf_{m \in \bar{L}^0}F(m) > s}^c$} $A_{s}:=\set{F(\infty)\geq s}$ and $B_s:=\Set{F(-\infty)\leq s}$.

\begin{lemma}\label{lem:inverselocal}
	The conditional left- and right-inverse of a local, increasing function $F:\bar{L}^0 \to \bar{L}^0$ are local, increasing functions which are left- and right-continuous, respectively.
\end{lemma}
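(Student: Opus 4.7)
The plan is to verify the three properties (monotonicity, locality, left-continuity) for $G:=F^{(-1,l)}$; the statement for $F^{(-1,r)}$ is then handled by a fully symmetric argument in which inequalities are reversed, $A_s$ is replaced by $B_s$, and left-continuity becomes right-continuity, so I would only point out the mirror changes. The key auxiliary object is the set $\mathcal{C}(s):=\set{m\in\bar{L}^0:1_{A_s}F(m)\geq 1_{A_s}s}$. Thanks to locality of $F$, whenever $m_1,m_2\in\mathcal{C}(s)$ and $B:=\set{m_1\leq m_2}$, the element $m_1\wedge m_2=1_Bm_1+1_{B^c}m_2$ again lies in $\mathcal{C}(s)$, so $\mathcal{C}(s)$ is downward directed and $\sigma$-stable, and the $\essinf$ in \eqref{eq:leftinverse} is attained along a decreasing sequence.

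For monotonicity, if $s\leq s'$ then $A_{s'}\subseteq A_s$. On $A_{s'}^c$ the inequality is trivial because $G(s')=\infty$. On $A_{s'}$, given $m\in\mathcal{C}(s')$, the patched element $m':=1_{A_{s'}}m+1_{A_{s'}^c}\infty$ belongs to $\mathcal{C}(s)$: by locality and the definition of $A_s$, $F(m')=1_{A_{s'}}F(m)+1_{A_{s'}^c}F(\infty)\geq s$ on $A_s$. Taking essinfs gives $1_{A_{s'}}G(s)\leq 1_{A_{s'}}G(s')$, which is monotonicity. For locality, fix $B\in\mathscr{G}$ and put $s'':=1_Bs+1_{B^c}s'$; one first checks $A_{s''}=(B\cap A_s)\cup(B^c\cap A_{s'})$. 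Locality of $F$ then yields that $1_Bm_1+1_{B^c}m_2\in\mathcal{C}(s'')$ for any $m_1\in\mathcal{C}(s)$, $m_2\in\mathcal{C}(s')$, and conversely that any $m\in\mathcal{C}(s'')$ glued with a fixed $\tilde m\in\mathcal{C}(s)$ via $1_Bm+1_{B^c}\tilde m$ lies in $\mathcal{C}(s)$. Passing to essinfs separately on $B$ and on $B^c$ delivers $1_BG(s'')=1_BG(s)$ and $1_{B^c}G(s'')=1_{B^c}G(s')$, which is locality.

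For left-continuity, let $s_n\uparrow s$. The family $(A_{s_n})_n$ is decreasing with $\bigcap_nA_{s_n}=A_s$, so $A_{s_n}^c\uparrow A_s^c$; on $A_s^c$ the identity $\esssup_nG(s_n)=\infty=G(s)$ is immediate from the $\infty$-convention. Set $m^\ast:=\esssup_nG(s_n)$; monotonicity gives $m^\ast\leq G(s)$. For the reverse, fix $\varepsilon\in L^0_{++}$, and use downward directedness of $\mathcal{C}(s_n)$ to pick $m_n\in\mathcal{C}(s_n)$ with $m_n\leq G(s_n)+\varepsilon\leq m^\ast+\varepsilon$. On $A_s\subseteq A_{s_n}$, monotonicity of $F$ yields $F(m^\ast+\varepsilon)\geq F(m_n)\geq s_n$ for every $n$, hence $F(m^\ast+\varepsilon)\geq s$ on $A_s$ after passing to the essential supremum in $n$. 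This places $m^\ast+\varepsilon\in\mathcal{C}(s)$, so $G(s)\leq m^\ast+\varepsilon$ on $A_s$, and sending $\varepsilon\downarrow 0$ closes the argument.

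The main obstacle I anticipate is the bookkeeping around the boundary sets $A_s$ and $A_s^c$, which have no scalar analogue: every inequality must be tracked separately on $A_s$ and on its complement, and essentially every step (directedness of $\mathcal{C}(s)$, the monotonicity pasting, the locality argument, and the left-continuity argument) requires an explicit appeal to locality of $F$ to justify a pasting of the form $1_Bm_1+1_{B^c}m_2$. The classical scalar proofs are otherwise essentially mechanical; the conditional subtleties live exclusively in making the $\infty$-convention, the $\sigma$-stability of $\mathcal{C}(s)$, and the descending family $(A_{s_n})$ interact consistently.
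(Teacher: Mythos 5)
Your monotonicity and locality arguments are sound and follow essentially the same route as the paper: in particular, the identity $A_{1_Bs+1_{B^c}s'}=(B\cap A_s)\cup(B^c\cap A_{s'})$ and the pasting $1_Bm_1+1_{B^c}m_2$ using locality of $F$ are exactly the paper's Steps 1 and 2.

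The left-continuity argument, however, has a genuine gap. You set $m^\ast:=\esssup_n G(s_n)$ and, to obtain $G(s)\leq m^\ast+\varepsilon$ on $A_s$, you want to ``pick $m_n\in\mathcal{C}(s_n)$ with $m_n\leq G(s_n)+\varepsilon$.'' On the event $B$ where $G(s_n)=\essinf\mathcal{C}(s_n)=-\infty$, this forces $m_n=-\infty$ on $B$; but $-\infty$ need not belong to $\mathcal{C}(s_n)$ there. Concretely, if $F(-\infty)<s_n\leq F^+(-\infty)$ on $B$ then $\mathcal{C}(s_n)$ contains every $m>-\infty$ (so its essential infimum is $-\infty$) but does not contain $-\infty$; downward directedness and $\sigma$-stability give a decreasing sequence approaching the infimum, not an element below it. As written, the step is unachievable on $B$, and the chain $F(m^\ast+\varepsilon)\geq F(m_n)\geq s_n$ cannot be started. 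The paper avoids this entirely by arguing by contradiction: if $\esssup_{\tilde s\in\cS}G(\tilde s)<\tilde m<G(s)$ on some $D\subseteq D_s$ of positive measure, then $\tilde s\leq F(\tilde m)<s$ on $D$ for all $\tilde s\in\cS$, and taking $\esssup$ over $\tilde s$ forces $P[D]=0$; this never requires producing an element of $\mathcal{C}(s_n)$ below $-\infty$. Your argument can be repaired — e.g.\ localize to $\set{m^\ast>-\infty}$ where your $\varepsilon$-approximation goes through (using $\sigma$-stability to patch a single $m_n\leq m^\ast+\varepsilon$ from a decreasing sequence), and treat $\set{m^\ast=-\infty}\cap A_s$ separately by noting that $\essinf\mathcal{C}(s_n)=-\infty$ for all $n$ there already forces $\essinf\mathcal{C}(s)=-\infty$ — but as written the proof does not close. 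One further minor omission: you should note why establishing $\esssup_n G(s_n)=G(s)$ for a chosen sequence $s_n\uparrow s$ (with $s_n<s$ on $\widetilde A_s$, taking $s_n=n$ on $\set{s=\infty}$) gives the paper's notion $G=G^-$, which is an $\esssup$ over the upward-directed family $\set{n<s\text{ on }\widetilde A_s}$, not over a single sequence.
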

\begin{proof}
	Consider a local, increasing function $F :\bar{L}^0\to \bar{L}^0$.
	We will prove the statement for the left-inverse $F^{(-1,l)}$, and the case of right-inverse function is done similarly.

	\begin{enumerate}[label=\textit{Step \arabic*:},fullwidth]
		\item Note that, $A_{\widetilde{s}} \supseteq A_{s}$ for every $\widetilde{s}\leq s$.
			This implies that $1_{A_s^c}\infty$ is increasing.
			Hence, a direct inspection shows that $F^{(-1,l)}$ is increasing.
		\item Next we will show that $F^{(-1,l)}$ is local.
			Pick $s,\widetilde{s}\in \bar{L}^0$ and $B \in \mathscr{G}$.
			Since $F$ is local, it follows that
			\begin{equation}
				C^c:=A^c_{1_{B}s+1_{B^c}\widetilde{s}}=\Set{F(\infty)<1_{B}s+1_{B^c}\widetilde{s}}=(B\cap A^c_{s})\cup(B^c\cap A^c_{\widetilde{s}}).
			\end{equation}
			
			Consequently, we deduce that  $C=(B\cap A_s)\cup(B^c\cap A_{\widetilde{s}})\cup(A_s\cap A_{\widetilde{s}})$.
			However, $(A_s\cap A_{\widetilde{s}})\subseteq (B\cap A_s)\cup(B^c\cap A_{\widetilde{s}})$, hence
			\begin{equation}
				C=(B\cap A_{s})\cup(B^c\cap A_{\widetilde{s}}) . \label{eq:app-C0}
			\end{equation}
			This implies that
			\begin{align}
				1_{C}(1_{B}s+1_{B^c}\widetilde{s}) & =1_{B}1_{A_s}s+1_{B^c}1_{A_{\widetilde{s}}}\widetilde{s} \label{eq:app-C1}\\
				1_{C^c}(1_{B}s+1_{B^c}\widetilde{s}) &=1_{B}1_{A_s^c}s+1_{B^c}1_{A^c_{\widetilde{s}}}\widetilde{s}. \label{eq:app-C2}
			\end{align}
			We claim that,
			\begin{equation}\label{eq:app-C3}
				1_{B}\Set{m\in \bar{L}^0: 1_{B}1_{A_s}F(m)\geq 1_{B}1_{A_s}s}=1_{B}\Set{m\in \bar{L}^0: 1_{A_s}F(m)\geq 1_{A_s}s}.
			\end{equation}
			Indeed, inclusion $\supseteq$ is straightforward.
			For the converse inclusion, let $1_{B}\widetilde{n} \in 1_{B}\Set{m\in \bar{L}^0: 1_{B}1_{A_s}F(m)\geq 1_{B}1_{A_s}s}$.
			Note that by the definition of $A_s$, the set $\set{m\in \bar{L}^0: 1_{A_s}F(m)\geq 1_{A_s}s}$ is not empty.
			Indeed, $A_s=\Set{F(\infty)\geq s}$, hence,  $1_{A_s}F(\infty)\geq 1_{A_s}s$ showing that $\infty \in \set{m\in \bar{L}^0: 1_{A_s}F(m)\geq 1_{A_s}s}$.
			Hence, pick some $\widetilde{m}\in \Set{m\in \bar{L}^0: 1_{A_s}F(m)\geq 1_{A_s}s}$.
			Locality of $F$ yields $1_{B}\widetilde{n}+1_{B^c}\widetilde{m} \in\Set{m\in \bar{L}^0: 1_{A_s}F(m)\geq 1_{A_s}s}$.
			Multiplying by $1_{B}$, we get $1_{B}\widetilde{m} \in1_{B}\Set{m\in \bar{L}^0: 1_{A_s}F(m)\geq 1_{A_s}s}$.

			Using  \eqref{eq:app-C0}-\eqref{eq:app-C3}, and locality of $F$, we deduce
			\begin{align*}
				F^{(-1,l)}\left( 1_{B} s+1_{B^c}\widetilde{s} \right)=&1_{C}\essinf \Set{m \in \bar{L}^0: 1_CF(m)\geq 1_{C}\left(1_{B}s+1_{B^c}\widetilde{s}\right)}+1_{C^c}\infty\\
				=& 1_{B}1_{A_s}\essinf \Set{m\in \bar{L}^0: 1_{B}1_{A_s}F(m)\geq 1_{B}1_{A_s}s}\\
				& \quad+1_{B^c}1_{A_{\widetilde s}}\essinf \Set{m\in \bar{L}^0: 1_{B^c}1_{A_{\widetilde{s}}}F(m)\geq 1_{B^c}1_{A_{\widetilde{s}}}\widetilde{s}}\\
				&\qquad +1_{B}1_{A_s^c}\infty+1_{B^c}1_{A_{\widetilde{s}}^c}\infty\\
				=&1_{B} \left( 1_{A_s}\essinf \Set{m\in \bar{L}^0: 1_{A_s}F(m)\geq 1_{A_s}s}+1_{A_s^c}\infty \right) \\
				&\quad+1_{B^c}\left( 1_{A_{\widetilde{s}}} \essinf \Set{m\in \bar{L}^0: 1_{A_{\widetilde{s}}}F(m)\geq 1_{A_{\widetilde{s}}}\widetilde{s}}+1_{A_{\widetilde{s}}^c}\infty \right)\\
				=&1_{B}F^{(-1,l)}(s)+1_{B}F^{(-1,l)}(\widetilde{s}).
			\end{align*}
			Hence $F^{(-1,l)}$ is local.
		\item	Finally, we will show that $F^{(-1,l)}$ is left-continuous.
			Let $s \in \bar{L}^0$.

By the definition of $F^{(-1,l)}$ and locality of $F$, clearly $F^{(-1,l)}(s)=-\infty$ on the set $C_s^c=\set{s=-\infty}$.
			Consider now $D_s=C_s\cap \set{F(\infty)\geq s}=\set{s>-\infty}\cap \set{F(\infty)\geq s}$, and $D_{\widetilde{s}}:=C_s\cap\set{F(\infty)\geq \widetilde{s}}$, for some $\widetilde{s}\in\bar{L}^0$.
			Note that $D_s \subseteq D_{\widetilde{s}}$ for any $\widetilde{s}$ such that $\widetilde{s}<s$ on $D_s$.

Denote by $\cS$ the set of those $\widetilde{s}\leq s$ such that $\widetilde{s}<s$ on $D_s$.  Note that $\cS\neq \emptyset$.
Let $\widetilde{s}\in\cS$, and suppose that $\esssup_{\widetilde{s}\in\cS}F^{(-1,l)}(\widetilde{s})<\widetilde{m}<F^{(-1,l)}(s)$ on some set $D\subseteq D_s$.
			By the definition of the left-inverse, and locality of $F$, it follows that $\widetilde{s}<F(\widetilde{m})<s$ on $D$ for every $\widetilde{s}\in\cS$, which is not possible unless $P[D]=0$.
			Hence, $\esssup_{\widetilde{s}\in\cS}F^{(-1,l)}(\widetilde{s})=F^{(-1,l)}(s)$ on $D_s$. From here, using locality of $F$, we also have that
$\esssup_{\widetilde{s}<s}F^{(-1,l)}(\widetilde{s})=F^{(-1,l)}(s)$ on $D_s$.
			Next, let us consider the set $E_s:=C_s\cap\set{F(\infty)<s}$.
			Since $F(\infty)<s$ on $E_s$, there exists $\widetilde{s}\in\bar{L}^0$ such that $\widetilde{s}<s$ on $E_s$, and $E_{\widetilde{s}}:=C_s\cap \set{F(\infty)<\widetilde{s}}=E_s$.
			Therefore, by the definition of $F^{(-1,l)}$ we conclude that $F^{(-1,l)}(\widetilde{s})=F^{(-1,l)}(s)$ for any $\widetilde{s}<s$ on $E_s=E_{\widetilde{s}}$, which  consequently shows that $F^{(-1,l)}$ is left continuous on $E_s$. \\
			Finally, since $C_s^c,D_s,E_s$ forms a partition of $\Omega$, and $F^{(-1,l)}$ is left-continuous on each of the sets from the partition, combined with  locality of $F^{(1-,l)}$, we deduce that $F^{(-1,l)}$ is left-continuous.
	\end{enumerate}
	The case of $F^{(-1,r)}$ follows analogously.
\end{proof}
\begin{remark}
	The sets $A_s,B_s$ are used to guarantee the locality of the right-and left-inverse, respectively.
	Indeed, suppose that we would define $F^{(-1,l)}(s)=\essinf \set{m\in \bar{L}^0:F\left( m \right)\geq s}$.
	Then it is possible to get a non-local inverse.
	For example, let $A \in \mathscr{G}$ with $0<P[A]<1$ and $F(m):=1_{A}2m+1_{A^c}$ which is increasing and local.
	Then, $F^{(-1,l)}(1_{A} 2)=1_{A}-1_{A^c}\infty$, whereas $F^{(-1,l)}(2)=\essinf{\emptyset}=+\infty$, and thus $1_{A}F^{(-1,l)}(1_{A}2)=1_{A}\neq1_A\infty= 1_{A}F^{(-1,l)}(2)$, which implies that $F^{(-1,l)}$ would not be local.
\end{remark}

\begin{proposition}\label{prop:leftrightinverse}
 Let  $F:\bar{L}^0 \to \bar{L}^0$ be a local, increasing function. Then, the following properties hold true:
\begin{description}
\item{(i)} Any conditional inverse $G$ of $F$ satisfies
	\begin{equation}
		F^{(-1,l)}=G^-\leq G\leq G^+=F^{(-1,r)};
		\label{eq:myinverse}
	\end{equation}
\item{(ii)} $F^{(-1,l)}$ and $F^{(-1,r)}$ are also both conditional inverse of $F$;

\item{(iii)} $F$ is a conditional inverse of any of its conditional inverses;

\item{(iv)}	For any $m,s \in \bar{L}^0$ we have that
	\begin{align}
		F^-(m)\leq s\quad &\Longleftrightarrow \quad m\leq F^{(-1,r)}\left( s \right)\label{C3}\\
		F^+(m)\geq s\quad &\Longleftrightarrow \quad m\geq F^{(-1,l)}\left( s \right)\label{C4}.
	\end{align}
\end{description}	
\end{proposition}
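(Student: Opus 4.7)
The plan is to prove (iv) first, as it supplies the two Galois-type equivalences from which the other parts follow in a relatively routine manner. With those in hand, (ii) is essentially a substitution, (i) is obtained by sandwiching any conditional inverse between $F^{(-1,l)}$ and $F^{(-1,r)}$ and passing to one-sided limits, and (iii) is a symmetric application of (iv) to $F$ itself.

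For (iv), I would work directly from the defining essential suprema/infima. To prove $F^-(m)\leq s \Rightarrow m\leq F^{(-1,r)}(s)$, take any $n<m$ on $\widetilde A_m=\set{m>-\infty}$. By definition of $F^-$, $F(n)\leq F^-(m)\leq s$. To put $n$ into the defining set $\set{m':1_{B_s}F(m')\leq 1_{B_s}s}$ of $F^{(-1,r)}(s)$ in a way that respects locality, I would glue $n$ with $-\infty$ off $\widetilde A_m\cap B_s$ and invoke $F(-\infty)\leq s$ on $B_s$; taking essential suprema then gives $m\leq F^{(-1,r)}(s)$ on $B_s$, while the identity on $B_s^c$ is forced by a contradiction argument (any $m>-\infty$ on $B_s^c$ would yield $F^-(m)\geq F(-\infty)>s$). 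The converse direction uses the alternative essinf characterization of $F^{(-1,r)}$ in \eqref{eq:rightinverse}: if $n<m\leq F^{(-1,r)}(s)$, then $F(n)$ cannot exceed $s$ on any set of positive measure without placing a suitable modification of $n$ into the set whose essinf is $F^{(-1,r)}(s)$. The equivalence \eqref{C4} is proved symmetrically.

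With (iv) established, (ii) drops out by substitution: plugging $m=F^{(-1,r)}(s)$ into \eqref{C3} (trivially satisfied) gives $F^-(F^{(-1,r)}(s))\leq s$, and $F^+(F^{(-1,r)}(s))\geq s$ follows from \eqref{C4} once one observes $F^{(-1,l)}(s)\leq F^{(-1,r)}(s)$ directly from the two definitions. The boundary prescriptions of a conditional inverse are already encoded in the indicator functions $1_{A_s}$ and $1_{B_s^c}$ inside \eqref{eq:leftinverse}-\eqref{eq:rightinverse}. For (i), given any conditional inverse $G$, the inequalities $F^-(G(s))\leq s\leq F^+(G(s))$ translate, through (iv), to $F^{(-1,l)}(s)\leq G(s)\leq F^{(-1,r)}(s)$ on $\set{F(-\infty)<s<F(\infty)}$, and the explicit boundary values of $G$ promote this pinching to a global inequality. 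Passing to one-sided limits and using Lemma~\ref{lem:inverselocal}, $F^{(-1,l)}=(F^{(-1,l)})^-\leq G^-$ and $G^+\leq (F^{(-1,r)})^+=F^{(-1,r)}$; the matching reverse inequalities rely on the short identities $(F^{(-1,r)})^-=F^{(-1,l)}$ and $(F^{(-1,l)})^+=F^{(-1,r)}$, which I would extract from (iv) by an esssup/essinf swap.

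Finally, (iii) follows by symmetry: (i) identifies $G^-=F^{(-1,l)}$ and $G^+=F^{(-1,r)}$ for any conditional inverse $G$ of $F$, so verifying that $F$ is a conditional inverse of $G$ reduces to the inequalities $F^{(-1,l)}(F(m))\leq m\leq F^{(-1,r)}(F(m))$; these are immediate from \eqref{C3}-\eqref{C4} applied with $s:=F(m)$ and the trivial $F^-(m)\leq F(m)\leq F^+(m)$. The boundary conditions carry over by reading them off from the known $G^\pm$. The main obstacle throughout is (iv): in the unconditional case this Galois correspondence is a textbook exercise, but in the $L^0$-module setting one must constantly interleave locality with essential suprema/infima and carefully track the partitioning sets $B_s$, $A_s$, $\widetilde A_m$, $\widetilde B_m$ and the $\pm\infty$ conventions; the rest of the proposition is then relatively mechanical.
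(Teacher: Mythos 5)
Your proposal is correct but takes a genuinely different route from the paper. You invert the dependency structure: you prove the two Galois-type equivalences (iv) directly from the definitions of $F^{(-1,l)}$, $F^{(-1,r)}$ and $F^\pm$, and then derive (ii), (i) and (iii) from them as relatively mechanical consequences. The paper proceeds in the opposite order: it first establishes, by direct sandwich/monotonicity arguments, the chain $F^{(-1,l)}\leq G^-\leq G\leq G^+\leq F^{(-1,r)}$ for any conditional inverse $G$, then proves the key one-sided-limit identities $(F^{(-1,l)})^+=F^{(-1,r)}$ and $(F^{(-1,r)})^-=F^{(-1,l)}$, from which (i)-(iii) follow; only at the very end does it address (iv), where the forward implications are read off the definitions and the converse implications are obtained for free by symmetry, applying the already-proved forward implications to $G$ and using \eqref{eq:myinverse} and Remark~\ref{rem:inverseleftright}. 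The trade-off is clear: your route is more conceptual, putting the Galois correspondence front and centre, but the direct proof of (iv) — especially the converse implications and the boundary sets $\set{F(\infty)\leq s}$ inside $B_s$, $\widetilde A_m^c$, etc. — demands heavier explicit locality bookkeeping than your sketch records (the paper avoids this by proving (i)-(iii) first and then exploiting duality, obtaining the converse implications essentially for free). Your derivations of (ii), (i) and (iii) from (iv) are clean, and your use of \eqref{C3}-\eqref{C4} with $s:=F(m)$ and the trivial $F^-(m)\leq F(m)\leq F^+(m)$ to get (iii) is in fact slicker than the paper's Step~4. Just be aware that the identities $(F^{(-1,l)})^+=F^{(-1,r)}$ and $(F^{(-1,r)})^-=F^{(-1,l)}$, which you say you would "extract from (iv) by an esssup/essinf swap," still need a genuine argument; the paper proves them directly (Step~2) rather than deducing them from (iv), so you should check that your deduction is circular-free.
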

\begin{remark}\label{rem:inverseleftright}
	Note that since $F$ is a conditional inverse of any of its conditional inverse, \eqref{eq:myinverse} implies that
	\begin{equation}
		\begin{split}
			F^-&=\left( F^{(-1,l)} \right)^{(-1,l)}=G^{(-1,l)}=\left( F^{(-1,r)} \right)^{(-1,l)},\\
			F^{+}&=\left( F^{(-1,r)} \right)^{(-1,r)}=G^{(-1,r)}=\left( F^{(-1,l)} \right)^{(-1,r)}.
		\end{split}
		\label{eq:inverseleftright}
	\end{equation}
\end{remark}

\begin{proof}
	Consider a local, increasing function $F :\bar{L}^0\to \bar{L}^0$ and a conditional inverse $G$ of $F$.
	\begin{enumerate}[label=\textit{Step \arabic*:},fullwidth]
		\item Let us show that
			\begin{equation}
				F^{(-1,l)}\leq G^-\leq G\leq G^+\leq F^{(-1,r)}.
				\label{eq:fundamental2}
			\end{equation}
			The fact that $G^-\leq G\leq G^+$ follows from the definition of left- and right-continuous version and from the fact that $G$ is increasing.
By Lemma \ref{lem:inverselocal}, we have  that $F^{(-1,l)}$ and $F^{(-1,r)}$ are local, increasing  and left- and right-continuous, respectively.

			Let us show now that $F^{(-1,l)}\leq G^-$.
			Since $F^{(-1,l)}$ is left-continuous, and both $F^{(-1,l)}$ and $G$ are increasing, it is  sufficient to show that $F^{(-1,l)}(s)\leq G(s)$, for every $s\in \bar{L}^0$. Assume that $s\in \bar{L}^0$.			The definition of $F^{(-1,l)}$ shows that $F^{(-1,l)}(s)=-\infty\leq G(s)$ on $\set{s\leq F(-\infty)}$.
			Since $G$ is an inverse of $F$, it follows that $G(s)=\infty\geq F^{(-1,l)}(s)$ on\footnote{Here also the set to be considered is $\set{s>F(\infty)}$ and not $\set{s\geq F(\infty)}$.} $\set{s>F(\infty)}$.
			On $\set{F(-\infty)<s<F(\infty)}$, suppose that there exists $\widetilde{m}\in L^0$ such that $F^{(-1,l)}(s)>\widetilde{m}>G(s)$ on some set $A\subseteq \set{F(-\infty)<s<F(\infty)}$.
			On the one hand,  by definition of $F^{(-1,l)}$ follows that $s>F(\widetilde{m})$ on $A$.
			On the other hand, since $\widetilde{m}>G(s)$ on $A$ it follows by means of \eqref{eq:fundamental} that $F(\widetilde{m})\geq F^-(\widetilde{m})\geq F^+(G(s))$.
			Thus, $s>F^{+}(G(s))$ on $A \subseteq \set{F(-\infty)<s<F(\infty)}$, which  contradicts the fact that $G$ is an inverse of $F$.
			Hence, $A$ has to be of probability $0$, and so, we proved that $F^{(-1,l)}\leq G$ on $\set{F(-\infty)<s<F(\infty)}$.

Finally, note that,  since $F^{(-1,l)}$ is left-continuous and $F^{(-1,l)}(s')\leq G(F(\infty))$ for any $s'<F(\infty)$, we have that $F^{(-1,l)}(F(\infty))\leq G(F(\infty))$.
The latter, together with locality of $F^{(-1,l)}$ and $G$, imply that $F^{(-1,l)}(s) \leq G(s)$ on set $\set{s=F(\infty)}$.
Hence, we conclude that $F^{(-1,l)}\leq G$.

A similar argumentation shows that $G^+\leq F^{(-1,r)}$ and therefore \eqref{eq:fundamental2} holds true.

		\item Let us show that
\begin{align}
(F^{(-1,l)})^{+}=F^{(-1,r)} \label{eq:app-C5}\\
(F^{(-1,r)})^{-}=F^{(-1,l)}. \label{eq:app-C6}
\end{align}
			Since $F^{(-1,l)}\leq F^{(-1,r)}$ and the latter is right-continuous, it follows that $(F^{(-1,l)})^+\leq F^{(-1,r)}$.
			On the other hand, for any $s<\widetilde{s}$, we have that  $F^{(-1,r)}(s)\leq F^{(-1,l)}(\widetilde{s})$.
			Indeed, on $A_{\widetilde{s}}^c$, it holds $F^{(-1,l)}(\widetilde{s})=\infty\geq F^{(-1,r)}(s)$.
			On $B_s^c$, it holds $F^{(-1,r)}(\widetilde{s})=-\infty \leq F^{(-1,l)}(s)$.
			Finally, on $C=(A^c_{\widetilde{s}}\cup B_{s}^c)^c=A_{\widetilde{s}}\cap B_{s}$, it holds $F(-\infty)\leq s<\widetilde{s}\leq F(\infty)$.
			Using now $s<\widetilde{s}$, and the definition of $F^{(-1,l)}$ and $F^{(-1,r)}$, since $C\subseteq A_{\widetilde{s}}$ and $C\subseteq B_s$, it yields
			\begin{equation*}
				1_{C}\Set{m \in \bar{L}^0: F(m)>s \text{ on }C }\supseteq 1_C\Set{m \in \bar{L}^0: 1_{C}F(m)\geq 1_{C}\widetilde{s}}.
			\end{equation*}
			Taking the essential infimum on both sides shows that $ 1_{C}F^{(-1,r)}(s) \leq 1_C F^{(-1,l)}(\widetilde{s}) $ for any $s<\widetilde{s}$.
			This together with $(F^{(-1,l)})^+ \leq F^{(-1,r)}$ implies by the definition of the right-continuous version that $1_C(F^{(-1,l)})^+=1_CF^{(-1,r)}$.
			Since $P[C\cup A^c_{\widetilde{s}}\cup B^c_s]=1$, it follows that $(F^{(-1,l)})^{+}=F^{(-1,r)}$.
			
			A similar argumentation yields $(F^{(-1,r)})^-=F^{(-1,l)}$.

		\item We deduce from \eqref{eq:fundamental2}, \eqref{eq:app-C5} and \eqref{eq:app-C6} that $F^{(-1,l)}=G^-$ and $F^{(-1,r)}=G^+$.
Therefore, \eqref{eq:myinverse} follows.
			
Let us prove that $F^{(-1,l)}$ and $F^{(-1,r)}$ are both conditional inverses of $F$. Towards this end, we first observe that \eqref{eq:fundamental2} together with
			Lemma \ref{lem:inverselocal} yield that $G^-$ and $G^+$ are local, increasing functions.
			Since $G(s)=-\infty$ on $\set{s<F(-\infty)}$ and $G(s)=\infty$ on $\set{s>F(\infty)}$, it follows immediately that the same holds for the left- and right-continuous versions of $G$.
			Using the fact that $G$ is a conditional inverse, monotonicity of $F^\pm$ yields
			\begin{equation*}
				F^-(G^-(s))\leq F^-(G(s))\leq s\leq F^{+}(G(s))\leq F^+(G^+(s)),
			\end{equation*}
			on $\set{F(-\infty)<s<F(\infty)}$.

		On the other hand, since $F^-, F^+, G$ are increasing, and $G$ is a conditional inverse, we deduce that
			\begin{equation*}
				\begin{split}
					F^-(G^+(s))&= F^-(\essinf_{\widetilde{s}>s}G(\widetilde{s}))\leq \essinf_{\widetilde{s}>s}F^-(G(\widetilde{s}))\leq \essinf_{\widetilde{s}>s}\widetilde{s}=s;\\
					F^+(G^-(s))&= F^+(\esssup_{\widetilde{s}<s}G(\widetilde{s}))\geq \esssup_{\widetilde{s}<s}F^+(G(\widetilde{s}))\geq \esssup_{\widetilde{s}<s}\widetilde{s}=s,
				\end{split}
			\end{equation*}
			on $\set{F(-\infty)<s<F(\infty)}$.
			Thus $F^{(-1,l)}=G^-$ and $F^{(-1,r)}=G^+$ are both conditional inverse of $F$.

		\item Let us show that $F$ is a conditional inverse of any of its conditional inverses.
			Let $G$ be a conditional inverse of $F$ and let $s,m \in \bar{L}^0$.

			First, we  claim that
\begin{equation}\label{eq:app7}
s>F(m) \ \textrm{ implies that} \  G(s)\geq m.
\end{equation}
			Indeed, on $\set{s>F(\infty)}$, $G(s)=\infty\geq m$. Next, note that, the assumption  $s>F(m)$ implies that $\set{s<F(\infty)} = \set{F(-\infty) < s<F(\infty)}$. Hence, on $ \set{s<F(\infty)}$, we have that $F^+(G(s))\geq s>  F(m)$ which implies $G(s)\geq m$.
			Finally, on $\set{s=F(\infty)}$, it follows that $F(\infty)=s>F(m)$.
			Therefore, $G(F(\infty))\geq F^{(-1,l)}(F(\infty))=\essinf \set{n \in \bar{L}^0:F(n)  \ \geq F(\infty)}\geq m$, since $F(\infty)>F(m)$.
			Hence, \eqref{eq:app7}.

			By \eqref{eq:app7}, and the definition of the right-continuous version, it follows that $G^+(F(m))\geq m$.
			A similar argumentation shows that $G^-(F(m))\leq m$ .
			Consequently,
			\begin{equation}\label{eq:app08}
				G^-(F(m))\leq m\leq G^+(F(m))\quad\text{on }\set{G(-\infty)<m<G(\infty)}.
			\end{equation}

Clearly $G(\infty)<\infty$ on set $\set{m>G(\infty)}$.
			By the definition of the conditional inverse, it follows that $F(\infty)=\infty$, on $\set{m>G(\infty)}$.
			Hence, by the first line of \eqref{eq:inverse}  we conclude that $F^+(G(\infty))=\infty$ on $\set{m>G(\infty)}$, which consequently implies that $F(m)=\infty$ on $\set{m>G(\infty)}$. Similarly, we get that $F(m)=-\infty$ on set $\set{m<G(-\infty)}$.
			From here, and \eqref{eq:app08}, we conclude that $F$ is a conditional inverse of any of its conditional inverse.

		\item Finally, let us show that \eqref{C3} and \eqref{C4} are satisfied.

Consider $m,s \in \bar{L}^0$.
			By the definition of $F^{(-1,l)}$, we have at once  \footnote{Note that by definition of the left-continuous version, $F(-\infty)\leq F^-(m)\leq s$ on $\set{m>-\infty}$} that on the set $\set{m>-\infty}$, $F^-(m)\leq s$ implies $m\leq F^{(-1,r)}(s)$. Clearly, this implication also holds true on the set $\set{m=-\infty}$.
Similarly, we deduce that $F^+(m)\geq s$ implies $m\geq F^{(-1,l)}(s)$.

The  converse implications follow by applying the last two implications to $G$ and then using \eqref{eq:myinverse} along with Remark \ref{rem:inverseleftright}.

	\end{enumerate}
\end{proof}

\subsection{Proof of Proposition \ref{prop:inverse}}\label{appendix:02}
\begin{proof}
 Let us first observe that Proposition \ref{prop:leftrightinverse} implies that there is a one-to-one correspondence between functions $F\, :\, \bar L^0 \to \bar L^0$, that are local, increasing and right-continuous, and their conditional right-inverses. In other words, the conditional right-inverse operator is a bijection between the sets of such functions. From this we deduce that if $\pi :\mathcal{K}^\circ \times \bar{L}^0\to \bar{L}^0$ is local in the second argument and if it satisfies \ref{cond:pen01}, then, its conditional right-inverse, say $R :\mathcal{K}^\circ \times \bar{L}^0\to \bar{L}^0$ is local in the second argument and satisfies \ref{cond:rm01}; moreover, the conditional right-inverse of $R$ is equal to $\pi.$

In the rest of the proof we shall show that, additional properties of $\pi$ are satisfied if and only if corresponding additional properties of $R$ are satisfied, e.g. \ref{cond:pen00}--\ref{cond:pen01} $\Leftrightarrow $ \ref{cond:rm00}--\ref{cond:rm01}, \ref{cond:pen00}--\ref{cond:pen01}, \ref{cond:pen02} $\Leftrightarrow $ \ref{cond:rm00}-- \ref{cond:rm01}, \ref{cond:rm02}, etc.

We start with showing that $R$ is jointly local if $\pi$ is jointly local.
	Take $X^\ast \in \mathcal{X}^\ast$, $s\in \bar{L}^0$ and $A \in \mathscr{G}$.
By similar argumentations as in the proof of locality from Proposition~\ref{prop:leftrightinverse}, and by the joint locality of $\pi$, we deduce that
	\begin{align*}
		1_{A}R\left( X^\ast,s \right)&=1_{A}R\left( X^\ast,1_{A}s \right)\\
		&=1_{A}1_{B_{1_{A}s}}\esssup\Set{m \in \bar{L}^0:1_{B_{1_{A}s}}\pi\left( X^\ast,m \right)\leq 1_{B_{1_{A}s}}1_{A}s}-1_{A}1_{B_{1_{A}s}^c}\infty \\
		&=1_{A}1_{B_{1_{A}s}}\esssup\Set{m \in \bar{L}^0:1_A1_{B_{1_{A}s}}\pi\left( X^\ast,m \right)\leq 1_{B_{1_{A}s}}1_{A}s}-1_{A}1_{B_{1_{A}s}^c}\infty \\
		&=1_{A}1_{B_{1_{A}s}}\esssup\Set{m \in \bar{L}^0:1_{B_{1_{A}s}}\pi\left( 1_AX^\ast,m \right)\leq 1_{B_{1_{A}s}}1_{A}s}-1_{A}1_{B_{1_{A}s}^c}\infty \\
		&=1_{A}R\left( 1_{A}X^\ast,1_{A}s \right),
	\end{align*}
where $B_{1_As} = \set{\pi(X^\ast,m) \leq 1_As}$.
This shows the joint locality of $R$.
Assuming that $R$ is jointly local, the joint locality of $\pi$ is proved similarly.

	The equivalences between \ref{cond:pen02}--\ref{cond:pen04} and \ref{cond:rm02}--\ref{cond:rm04} are proved similarly as in \citep[Lemma C.2]{DrapeauKupper2010} after corresponding adjustments to the conditional case.
Indeed, under condition \ref{cond:pen00}, the fact that $\pi(\cdot,m)$ is upper semicontinuous and concave for every $m \in \bar{L}^0$ is equivalent to the fact that the hypograph of $\pi$
	\begin{equation*}
		\Set{(X^\ast,s)\in \mathcal{K}^\circ\times L^0:\pi(X^\ast,m)\geq s}
	\end{equation*}
	is closed and convex for every $m \in \bar{L}^0$.
	Using $\eqref{C4}$, this is equivalent to the fact that the set
	\begin{equation*}
		\Set{(X^\ast,s)\in \mathcal{K}^\circ\times L^0:m\geq R^-(X^\ast,s)}
	\end{equation*}
	is closed and convex for every $m \in \bar{L}^0$,  which implies that $R^-$ is jointly lower semicontinuous and quasiconvex.
	Furthermore $R^-$ is jointly quasiconvex if and only if $R$ is jointly quasiconvex.

	Similarly, one can show that $\pi$ is positive homogeneous if and only if $R(\lambda X^\ast,s)=R(X^\ast,s/\lambda)$ for every $\lambda\in L^0_{++}$.

Finally, we will show the equivalence between \ref{cond:pen03} and \ref{cond:rm03}, under the assumption that \ref{cond:pen00}, \ref{cond:pen01}, and respectively \ref{cond:rm00}, \ref{cond:rm01} are satisfied. Note that condition \ref{cond:pen03} is equivalent to the following condition
	\begin{align*}
			\Big( \pi\left( X^\ast,m \right)=\infty\quad \text{for some }m\in \bar{L}^0, \ X^\ast \in \mathcal{K}^\circ \Big) \ \Longrightarrow \
			\Big( \pi\left( Y^\ast,m \right)=\infty\quad \text{for all }Y^\ast \in \mathcal{K}^\circ\Big),
	\end{align*}
which, consequently, is equivalent to
	\begin{align*}
		\Big( \pi\left( X^\ast,m \right)\geq s\quad \text{for all }s \in L^0\text{, and for some }m\in \bar{L}^0, \ X^\ast \in \mathcal{K}^\circ\Big) \\
 \Longrightarrow \Big( \pi\left( Y^\ast,m \right)\geq s \quad\text{ for all }s \in L^0,\text{ and for all }Y^\ast \in \mathcal{K}^\circ\Big).
	\end{align*}
By \eqref{C4}, it follows that the latter implication is equivalent to
	\begin{align*}
		\Big( m\geq R^-\left( X^\ast,s \right)\quad \text{for all }s \in L^0\text{, for some }m\in \bar{L}^0, \ X^\ast \in \mathcal{K}^\circ \Big) \\
	\Longrightarrow \Big(	m\geq R^-\left( Y^\ast,s \right) \quad\text{ for all }s \in L^0, \text{ and for all }Y^\ast \in \mathcal{K}^\circ \Big).
	\end{align*}
	Noticing that $R^-(X^\ast,\infty)=\esssup_{s \in L^0} R(X^\ast,s)$, we deduce that the last condition is equivalent to
	\begin{align}
		\Big( m\geq R^-\left( X^\ast,\infty \right)=\esssup_{s \in L^0}R(X^\ast,s)\quad \text{for some }m\in \bar{L}^0, \ X^\ast \in \mathcal{K}^\circ \Big)\nonumber\\
		\Longrightarrow \Big(
		m\geq R^-\left( Y^\ast,\infty \right)=\esssup_{s \in L^0}R(Y^\ast,s), \ \text{ and for all }Y^\ast \in \mathcal{K}^\circ \Big).  \label{eq:piinverse2}
	\end{align}
Taking in the last implication $m=R^-(X^\ast,\infty)$, we get that 	$R^-(X^\ast,\infty)\geq R^-(Y^\ast,\infty)$ for any $Y^\ast$. Applying the equivalence consequently to $m=R^-(Y^\ast,\infty)$, we conclude that
\begin{equation}\label{eq:piinverse3}
		R^-\left( X^\ast,\infty \right)=R^-(Y^\ast,\infty)\quad \text{for all }X^\ast,Y^\ast \in \mathcal{K}^\circ.
\end{equation}
Clearly, if  \eqref{eq:piinverse3} holds true, then implication \eqref{eq:piinverse2} also holds true, and hence \eqref{eq:piinverse2} is equivalent to \eqref{eq:piinverse2}.
Thus, $\pi$ satisfies \ref{cond:pen03} if and only if $R$ satisfies \ref{cond:rm03} which completes the proof.

\end{proof}

\subsection{Proof of Proposition~\ref{thm:01}}\label{appendix:03}

Before proving the Proposition~\ref{thm:01}, we first give the definition of the conditional characteristic function, followed by the Proposition~\ref{prop:support} that contains some relevant properties of the conditional characteristic function.

\begin{definition}\label{def:support}
	Let $\mathcal{C}$ be a $\sigma$-stable subset of $\mathcal{X}$.
	For $X \in \mathcal{X}$ we define $A(X)=\esssup\Set{B \in \mathscr{G}: 1_{B}X\in 1_{B}\mathcal{C}}$.
	The function $\chi_{\mathcal{C}}:\mathcal{X}\to \bar{L}^0$ given by
	\begin{equation}
		\chi_{\mathcal{C}}\left( X \right)=-1_{A^c(X)}\infty=
		\begin{cases}
			0&\text{on } A(X)\\
			-\infty&\text{on }A^c(X)
		\end{cases}
		,\quad X \in \mathcal{X},
		\label{def:supportfion}
	\end{equation}
	is called the \emph{conditional characteristic function} of $\mathcal{C}$.
\end{definition}
Note that the conditional characteristic function is a mapping from  $\mathcal{X}$ to $\bar{L}^0$.
\begin{proposition}\label{prop:support}
	Let $\mathcal{C}$ be a $\sigma$-stable set.
	Then, $\chi_{\mathcal{C}}$ is a local function.
	Furthermore,
	\begin{itemize}
		\item $\mathcal{C}$ is nonempty if and only if $\chi_{\mathcal{C}}$ is proper;
		\item $\mathcal{C}$ is monotone if and only if $\chi_{\mathcal{C}}$ is monotone;
		\item $\mathcal{C}$ is convex if and only if $\chi_{\mathcal{C}}$ is concave;
		\item $\mathcal{C}$ is a cone if and only if $\chi_{\mathcal{C}}$ is positive homogeneous;
		\item $\mathcal{C}$ is closed if and only if $\chi_{\mathcal{C}}$ is upper semicontinuous.
	\end{itemize}	
\end{proposition}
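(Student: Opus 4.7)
The strategy is to exploit the binary nature of $\chi_{\mathcal{C}}$, which takes only the values $0$ (on $A(X)$) and $-\infty$ (on $A(X)^c$). I would begin by establishing the key identity
\begin{equation*}
\{X\in\mathcal{X} : \chi_{\mathcal{C}}(X)\geq 0\} = \mathcal{C}.
\end{equation*}
One inclusion is trivial; the other uses $\sigma$-stability: if $A(X)=\Omega$, the definition of $\esssup$ provides an increasing $(B_n)\subseteq\mathscr{G}$ with $B_n\uparrow\Omega$ and $Y_n\in\mathcal{C}$ satisfying $1_{B_n}X=1_{B_n}Y_n$, so pasting $Y:=\sum 1_{B_n\setminus B_{n-1}}Y_n\in\mathcal{C}$ gives $Y=X$. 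Because $\chi_{\mathcal{C}}$ is always $\leq 0$, this identity immediately yields the equivalence between $\mathcal{C}\neq\emptyset$ and properness of $\chi_{\mathcal{C}}$.

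Next I would prove locality via the set-theoretic identity $A(1_A X+1_{A^c}Y)=(A\cap A(X))\cup (A^c\cap A(Y))$ for $A\in\mathscr{G}$: the inclusion $\supseteq$ is direct, while $\subseteq$ is another $\sigma$-stability pasting argument. Reading off the two possible values of $\chi_{\mathcal{C}}$ on each piece gives $\chi_{\mathcal{C}}(1_A X+1_{A^c}Y)=1_A\chi_{\mathcal{C}}(X)+1_{A^c}\chi_{\mathcal{C}}(Y)$. With locality in hand, the monotone, convex, and cone equivalences reduce to pointwise checks based on the key identity above. The only technical input needed is that the $L^0$-convex cone $\mathcal{K}$ with $0\in\mathcal{K}$ satisfies $1_B\mathcal{K}\subseteq\mathcal{K}$ for every $B\in\mathscr{G}$ (take $\lambda=1_B$ in the $L^0$-convex combination $\lambda K+(1-\lambda)\cdot 0$). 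For instance, given $X\succcurlyeq Y$ and $C:=A(Y)$, write $1_CY=1_CY'$ with $Y'\in\mathcal{C}$; then $Y'+1_C(X-Y)\in\mathcal{C}+\mathcal{K}=\mathcal{C}$ agrees with $X$ on $C$, so $C\subseteq A(X)$, yielding monotonicity of $\chi_{\mathcal{C}}$. The convex and cone cases proceed analogously, with the convention $0\cdot(-\infty)=0$ forcing a short $\omega$-wise case split over $\{\lambda=0\}$, $\{\lambda=1\}$ and their complements; the converse implications are immediate from the fact that $X\in\mathcal{C}\Leftrightarrow\chi_{\mathcal{C}}(X)\geq 0$.

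The main obstacle is the equivalence $\mathcal{C}$ closed $\Leftrightarrow\chi_{\mathcal{C}}$ upper semicontinuous. One direction is immediate since $\mathcal{C}=\{X:\chi_{\mathcal{C}}(X)\geq 0\}$ is then a closed upper level set. For the converse, I would first show that for an arbitrary $m\in\bar{L}^0$ with $B:=\{m>-\infty\}$,
\begin{equation*}
\{X\in\mathcal{X}:\chi_{\mathcal{C}}(X)\geq m\} = 1_B\mathcal{C}+1_{B^c}\mathcal{X},
\end{equation*}
with the set being empty if $m$ attains $+\infty$ on a positive-measure set. The delicate step is closedness of $1_B\mathcal{C}$: given a convergent net $1_BY_\alpha\to W$ with $Y_\alpha\in\mathcal{C}$, fix any $C_0\in\mathcal{C}$ (the empty case is trivial) and form $Y'_\alpha:=1_BY_\alpha+1_{B^c}C_0\in\mathcal{C}$ by $\sigma$-stability. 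Continuity of multiplication forces $Y'_\alpha\to 1_BW+1_{B^c}C_0$, which belongs to $\mathcal{C}$ by closedness, and applying $1_B$ yields $W=1_B(1_BW+1_{B^c}C_0)\in 1_B\mathcal{C}$. Combined with the continuous decomposition $\mathcal{X}=1_B\mathcal{X}\oplus 1_{B^c}\mathcal{X}$, this gives closedness of the full upper level set, completing the proof.
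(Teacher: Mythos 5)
Your proof is correct and follows essentially the same route as the paper's, reducing each equivalence to a statement about the sets $A(X)$ via the fundamental fact $\{X : \chi_{\mathcal C}(X)\geq 0\}=\mathcal C$ and using $\sigma$-stability pasting throughout. The only substantive difference is that you spell out details the paper leaves implicit --- in particular the net argument showing $1_B\mathcal C$ (and hence $1_B\mathcal C+1_{B^c}\mathcal X$) is closed, which the paper simply asserts.
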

\begin{proof}
	For $B \in \mathscr{G}$ and $X \in \mathcal{C}$, since $\mathcal{C}$ is $\sigma$-stable, it holds
	\begin{multline*}
		B\cap A\left( X \right)=\esssup\set{\widetilde{B}\cap B:\widetilde{B}\in \mathscr{G}\text{ and }1_{\widetilde{B}}X\in 1_{\widetilde{B}}\mathcal{C}}\\
		=B\cap \esssup\Set{\widetilde{B} \in \mathscr{G}:1_{\widetilde{B}}1_{B}X \in 1_{\widetilde{B}}1_{B}\mathcal{C}}=B\cap A\left( 1_{B}X \right).
	\end{multline*}
	This implies that $B\cap A^c(X) = B\cap A^c(1_BX)$, and hence, $1_{B}\chi_{\mathcal{C}}\left( 1_{B}X \right)=1_{B}\chi_{\mathcal{C}}\left( X \right)$, and therefore $\chi_{\mathcal{C}}$ is local.
	By definition, $\chi_{\mathcal{C}}<+\infty$.
	On the other hand, $A(X)$ is of measure zero for every $X$ if and only if $\mathcal{C}$ is the empty set,
therefore $\chi_{\mathcal{C}}$ is proper if and only if $\mathcal{C}$ is nonempty.
	The monotonicity of $\cC$ implies the monotonicity of $\chi_\cC$ is immediate by the definition of $A(X)$, $X \in \mathcal{X}$.
	Since $X \in \mathcal{C}$ if and only if $\chi_{\mathcal{C}}(X)=0$, the converse implication also follows.
	Using $\sigma$-stability of $\mathcal{C}$, it can be showed that $A( \lambda X+(1-\lambda)Y )\supseteq (A( X )\cap A (Y ))$ if and only if $\mathcal{C}$ is convex, and so $\chi_{\mathcal{C}}$ is a concave function if and only if $\mathcal{C}$ is convex.
	Similarly one proves that $\mathcal{C}$ is a cone if and only if $\chi_{\mathcal{C}}$ is positive homogeneous.

	Finally, if $\mathcal{C}=\emptyset$, clearly $\chi_{\mathcal{C}}$ is  upper semicontinuous.
	Otherwise, note that
	\begin{equation*}
		\Set{X \in \mathcal{X}:\chi_{\mathcal{C}}\left( X \right)\geq m}=
		\begin{cases}
			1_{\set{m>-\infty}}\mathcal{C}+1_{\set{m=-\infty}}\mathcal{X} &\text{if }m\leq 0,\\
			\emptyset &\text{otherwise,}
		\end{cases}
	\end{equation*}
	which is a closed set for every $m\in \bar{L}^0$ if and only if $\mathcal{C}$ is closed.
\end{proof}


\begin{proof}[Proof of Proposition~\ref{thm:01}]
If $\mathcal{C}=\emptyset$, then $\pi\equiv \infty$ fulfills all required conditions.
Hence, we will consider the case $\mathcal{C}\neq\emptyset$.

\begin{enumerate}[label=\textit{Step \arabic*:},fullwidth]

	\item We first assume that $\mathcal{K}=\set{0}$, so that $\mathcal{K}^\circ=\mathcal{X}^\ast$.
		We start with the existence of $\pi$.
		Since, $\mathcal{C}\neq\emptyset$, by Proposition~\ref{prop:support}, the conditional characteristic function $\chi_{\mathcal{C}}$ is a local, proper, concave and upper semicontinuous function. Using the definition of $\chi_\cC$, we deduce that its concave conjugate $\chi^\star_{\mathcal{C}}(X^\ast):=\essinf_{X \in \mathcal{X}}\Set{\langle X^\ast,X\rangle-\chi_{\mathcal{C}}(X)}$ can be also represented as follows
		\begin{equation}\label{eq:myblabla02}
			\chi^\star_{\mathcal{C}}(X^\ast)=\essinf_{X \in \mathcal{C}}\langle X^\ast,X\rangle,\quad X^\ast \in \mathcal{X}^\ast.
		\end{equation}
		Indeed, since $X \in \mathcal{C}$ if and only if $\chi_{\mathcal{C}}(X)=0$, it clearly follows that $\chi^\star_{\mathcal{C}}(X^\ast)\leq\essinf_{X \in \mathcal{C}}\langle X^\ast,X\rangle$.
		Suppose now that there exists $X_0 \in \mathcal{X}$ such that
		\begin{equation}\label{eq:myblabla}
			1_{A}\langle X^\ast,X_0\rangle-1_{A}\chi_{\mathcal{C}}(X_0)<1_{A}\essinf_{\mathcal{C}}\langle X^\ast,X\rangle
		\end{equation}
		on some set $A$.
		Note that by locality, the definition of $\chi_{\mathcal{C}}$ and the fact that $\mathcal{C}\neq \emptyset$, we have that  $1_{A}\chi^\star_{\mathcal{C}}(X^\ast)=1_{A}\chi_{\mathcal{C}}^\star(1_{A}X^\ast)$, $1_{A}\essinf_{\mathcal{C}}\langle X^\ast,X\rangle=\essinf_{X \in 1_A\mathcal{C}}\langle 1_AX^\ast,X\rangle$ and $1_{A}\chi_{\mathcal{C}}(X)=\chi_{1_{A}\mathcal{C}}(1_{A}X)$.
		However, the strict inequality in \eqref{eq:myblabla} implies that $1_A\chi_{\mathcal{C}}(X_0)=\chi_{1_{A}\mathcal{C}}(1_AX_0)>-\infty$, that is $\chi_{1_{A}\mathcal{C}}(1_{A}X_0)=0$.
		Hence
		\begin{equation*}
			1_{A}\langle X^\ast,X_0\rangle -1_{A}\chi_{\mathcal{C}}(X_0)=1_{A}\langle X^\ast,1_{A}X_0\rangle-\chi_{1_{A}\mathcal{C}}(1_AX_0)\geq 1_{A}\essinf_{X \in 1_{A}\mathcal{C}}\langle X^\ast ,X\rangle=1_{A}\essinf_{X \in \mathcal{C}}\langle X^\ast ,X\rangle
		\end{equation*}
		showing together with \eqref{eq:myblabla} that $A$ is a set of null measure.
	
	Note that, by Proposition~\ref{prop:FM}, $\chi^\star_{\mathcal{C}}$ is upper semicontinuous and concave and clearly positive homogeneous.
	Furthermore, since $\mathcal{C}\neq \emptyset$, in view of \eqref{eq:myblabla}, it follows that $\chi_{\mathcal{C}}^\star<\infty$ and therefore maximal invariant.

	By the conditional Fenchel-Moreau Theorem (cf. \citep[Theorem 3.8]{FilipovicKupperVogelpoth2009} or Proposition~\ref{prop:FM}(3)), we have that
	\begin{equation*}
		\chi_{\mathcal{C}}\left( X \right)=\chi_{\mathcal{C}}^{\star\star}\left( X \right):=\essinf_{X^\ast \in \mathcal{X}^\ast}\set{\langle X^\ast,X\rangle -\chi_{\mathcal{C}}^\star\left( X^\ast \right)}.
	\end{equation*}
	Hence, by the definition of $\chi_{\mathcal{C}}$ and \eqref{eq:myblabla02} it follows that
	\begin{equation}
		\begin{split}
			X \in \mathcal{C}\quad \Longleftrightarrow& \quad 0\leq \chi_{\mathcal{C}}(X)=\essinf_{X^\ast \in \mathcal{X}^\ast}\set{\langle X^\ast,X\rangle -\chi_{\mathcal{C}}^\star\left( X^\ast \right)}\\
			\Longleftrightarrow&\quad \langle X^\ast,X\rangle \geq \chi^\star_{\mathcal{C}}(X^\ast)=\essinf_{Y\in \mathcal{C}}\langle X^\ast,Y\rangle ,\quad\text{for all }X^\ast \in \mathcal{X}^\ast.
		\end{split}
		\label{eq:blibli1}
	\end{equation}
	Thus, the function
	\begin{equation*}
		\pi\left( X^\ast \right):=\essinf_{X \in \mathcal{C}}\langle X^\ast,X\rangle, \quad X^\ast \in \mathcal{X}^\ast,
	\end{equation*}
	fulfills relation \eqref{robust1} and the conditions \ref{cond:pen02bis} to \ref{cond:pen04bis}.
	
	\item
		As for the uniqueness of $\pi$, let $\pi^1,\pi^2:\mathcal{X}^\ast \to \bar{L}^0$ fulfill the  conditions \ref{cond:pen02bis} to \ref{cond:pen04bis} and relation \eqref{robust1}. We will still assume that $\mathcal{K}=\set{0}$.
		If $\pi^1\left( X^\ast \right)= \infty$ for some $X^\ast \in \mathcal{X}^\ast$, then by relation \eqref{robust1}, it follows that $\mathcal{C}=\emptyset$ which implies $\pi^2(Y^\ast)=\infty$ for some $Y^\ast \in \mathcal{X}^\ast$.
		Since both are maximal invariant, it follows that $\pi^1=\pi^2=\infty$.
		Now suppose $\pi^i<\infty$.\footnote{Note that since $\pi^i$ is maximal invariant then we only need to consider two cases: $\pi^i=\infty$ and $\pi^i<\infty $, $i=1,2.$}

		We claim that, for $i=1,2,$
		\begin{equation}\label{eq:robrep4}
			\Big(1_B\pi^i(X^*)=-1_B\infty \textrm{ for all } X^*\in\cX^* \Big) \quad \Longleftrightarrow \quad 1_B\cC = 1_B\cX.
		\end{equation}
		Indeed, if $1_B \pi^i(X^\ast)=-1_B \infty$, for all $X^\ast\in\cX^\ast$, then
		\begin{equation}\label{eq:robrep5}
			1_B\langle X,X^\ast \rangle\geq 1_B\pi^i(X^\ast),
		\end{equation}
		for all $X \in \mathcal{X}$ and all $X^\ast\in\cX^\ast.$
		Since $\cC\neq \emptyset$, we take any $Y\in\cC$. By \eqref{robust1}, we get that
		$1_{B^c}\langle Y,X^\ast \rangle\geq 1_{B^c}\pi^i(X^\ast)$, for all $X^\ast\in\cX^\ast$, which combined with \eqref{eq:robrep5}, and by locality, gives us
		$$
		\langle 1_BX + 1_{B^c}Y,X^\ast \rangle\geq \pi^i(X^\ast), \ \textrm{ for all } X^\ast\in\cX^\ast,
		$$
		and thus $Z=1_BX+1_{B^c}Y\in\cC$. Moreover, $1_BX=1_BZ$, and hence since $X$ was arbitrary in $\cX$, we conclude that $1_B\cX\subseteq 1_B\cC$. The inclusion $1_B\cX\supseteq1_B\cC$ is obvious, and thus $1_B\cX=1_B\cC$.

		Assume that $1_B\cX=1_B\cC$ and that there exist $X_0^\ast\in\cX^\ast, \ B_1\subseteq B$ such that $1_{B_1}\pi^i(X_0^\ast) > - 1_{B_1}\infty$ on $B_1$.
		Take $X_0\in\cX$ such that $\langle X_0 ,X_0^\ast \rangle <0$ on $B_1$. Then, for a sufficiently large $\lambda_0\in L^0_{++}$, we have that
		$\langle \lambda_0X_0 ,X_0^\ast \rangle <\pi^i(X_0^\ast)$ on $B_1$, and hence $1_{B_1}\lambda_0X_0\notin 1_{B_1}\cC$. However, $1_{B_1}\lambda_0X_0\in1_{B_1}\cX=1_{B_1}\cC$, which yields a contradiction. Thus, the equivalence  \eqref{eq:robrep4} is established.

Next, define the sets
\begin{equation*}
A^i:=\esssup\Set{B\in \mathscr{F}:1_B\pi^i(X^\ast)=-1_B\infty\text{ for all }X^\ast \in \mathcal{X}^\ast}, \ i=1,2.
\end{equation*}
By \eqref{eq:robrep4}, we get that $A_1=A_2$.
Note that on the set $A^1=A^2$, the functions $\pi^1$ and $\pi^2$ coincides and are both equal to $-\infty$.
		Define $\widetilde{\pi}^i=1_{A}\pi^i$, where $A:=(A^1)^c=(A^2)^c$.
		These functions are concave, upper semicontinuous and local since both the $\pi^i$ are so, and proper by the definition of $A$.
		Due to the conditional Fenchel-Moreau Theorem we obtain
		\begin{equation}
			\widetilde{\pi}^i\left( X^\ast \right)=\essinf_{X \in \mathcal{X}}\Set{\langle X^\ast,X\rangle-\widetilde{\pi}^{i,\star}\left( X \right) },\quad X^\ast \in \mathcal{X}^\ast,
			\label{eq:fenchel01}
		\end{equation}
		where
		\begin{equation}
			\widetilde{\pi}^{i,\star}\left( X \right)=\essinf_{X^\ast \in \mathcal{X}^\ast}\Set{\langle X^\ast,X\rangle -\widetilde{\pi}^i(X^\ast)},\quad X \in \mathcal{X}.
			\label{}
		\end{equation}
		Since $\widetilde{\pi}^i$ is positively homogeneous, and $\widetilde{\pi}^{i,\star}$ is proper, we have that $\widetilde{\pi}^{i,\star}$ can only take the values $0$ or $-\infty$.
		Therefore,
		\begin{equation*}
			\widetilde{\pi}^{\star,i}(X)=0 \quad \Longleftrightarrow\quad  \langle X^\ast,X\rangle \geq \pi^i(X^\star)\, \text{for all } X^\ast \in \mathcal{X}^\ast
			\quad \Longleftrightarrow \quad X\in 1_{A}\mathcal{C}.
		\end{equation*}
		Hence $\widetilde{\pi}^{\star,1}=\widetilde{\pi}^{2,\star}$, which together with equation \eqref{eq:fenchel01} implies that $\widetilde{\pi}^i=\widetilde{\pi}^2$.
		Thus, $\pi^1=\pi^2$.

		\item Finally, let us consider the case where $\mathcal{K}\neq \set{0}$, that is $\mathcal{K}^\circ \neq \mathcal{X}^\ast$.
						As we already showed in Step~1, the function $\pi:\cX^*\to\bar{L}^0$ given by
			\begin{equation}
				\pi(X^\ast)=\essinf_{X \in \mathcal{C}}\langle X^\ast,X\rangle, \quad X^\ast \in \mathcal{X}^\ast,
			\end{equation}
			satisfies conditions \ref{cond:pen02bis}-\ref{cond:pen04bis}, hence its restriction on $\cK^\circ$ satisfies \ref{cond:pen02bis}-\ref{cond:pen04bis}.
			Taking into account the uniqueness proved in Step~2, the proof will be complete if we show that $\pi:\cK^\circ\to\bar{L}^0$ fulfills \eqref{robust1}.

First, we will show that for any $X^\ast \in \mathcal{X}^\ast$, we have that
			\begin{equation*}
				\pi(X^\ast)=-\infty \quad \text{on }A^c_{X^*},
			\end{equation*}
			where $A_{X^*}=\esssup\set{B \in \mathscr{G}: 1_B X^\ast \in \mathcal{K}^\circ}$.
			Indeed, by definition of the polar cone and $A_{X^*}$, it follows that there exists $Y \in \mathcal{K}$ such that
			\begin{equation*}
				\langle X^\ast ,Y\rangle <0, \quad \text{ on }A^c_{X^*}.
			\end{equation*}
			Take $\hat{X} \in \mathcal{C}$;  by monotonicity of $\cC$, we get that $\hat{X}+\lambda Y \in \mathcal{C}$ for every $\lambda>0$.
			Hence,
			\begin{equation*}
				\pi(X^\ast)=\essinf_{X \in \mathcal{C}}\langle X^\ast,X\rangle \leq \langle X^\ast,\hat{X}\rangle +\lambda \langle X^\ast,Y\rangle,\quad \text{ for every }\lambda>0.
			\end{equation*}
			Hence, letting $\lambda$ going to $\infty$, and taking into account that $\langle X^\ast,Y\rangle<0$ on $A^c_{X^*}$, we conclude that $\pi(X^\ast)$ is equal to $-\infty$ on $A^c_{X^*}$.

			Next, define $\widetilde{X}^\ast:=1_{A}X^\ast$, and note that by the definition of $A_{X^*}$, we have that $\widetilde{X}^\ast\in \mathcal{K}^\circ$.
			Since $\pi(X^\ast)=-\infty$ on $A^c_{X^*}$, by locality and the fact that $\pi(0)=0$, it follows that
			\begin{equation}\label{eq:setCproof1}
				\langle X^\ast,X\rangle \geq \pi(X^\ast)\quad \Longleftrightarrow \quad \langle \widetilde{X}^\ast,X\rangle=1_{A_{X^*}}\langle X^\ast,X\rangle \geq 1_{A_{X^*}}\pi(X^\ast)=\pi(\widetilde{X}^\ast).
			\end{equation}
Note that, by locality and the definition of $A_{X^*}$ and $\widetilde{X}^*$, we have that
$$
\cK^\circ=\set{Y^* \in\cX^* : \textrm{ there exists } X^*\in\cX^* \textrm{ such that } Y^*=1_{A_{X^*}}X^*}.
$$
Using this and \eqref{eq:setCproof1}, we conclude that
		\begin{equation*}
			X\in \mathcal{C} \quad \Longleftrightarrow \quad \langle X^\ast,X\rangle \geq \pi(X^\ast)\quad\text{for all }X^\ast \in \mathcal{K}^\circ.
		\end{equation*}

\end{enumerate}
This completes the proof.
\end{proof}

\end{appendix}

\section*{Acknowledgments}
Tomasz R. Bielecki and Igor Cialenco acknowledge support from the NSF grant DMS-0908099, and DMS-1211256.
The work of Samuel Drapeau was supported in part by MATHEON, project E.11.
Martin Karliczek acknowledge support from Konsul Karl und Dr. Gabriele Sandmann Stiftung grant.
The authors would like to thank Prof. Michael Kupper for stimulating discussions and helpful remarks.


\end{document}